\newcommand{\De}{\mathscr{D}}
\newcommand{\C}{\mathbb C}
\newcommand{\D}{\mathbb D}
\newcommand{\Hu}{\mathbb H}
\newcommand{\R}{\mathbb R}
\newcommand{\T}{\mathbb T}
\newcommand{\N}{\mathbb N}
\newcommand{\Z}{\mathbb Z}
\newcommand{\supp}{\operatorname{supp}}
\newcommand{\re}{\operatorname{Re}}
\newcommand{\im}{\operatorname{Im}}
\newcommand{\spec}{\operatorname{Spec}}
\numberwithin{equation}{section}
\newtheorem{theorem}{Theorem}[section]
\newtheorem{prop}[theorem]{Proposition}
\newtheorem{lemma}[theorem]{Lemma}
\newtheorem{cor}[theorem]{Corollary}
\newtheorem*{thm*}{Theorem}
\newtheorem*{lemma*}{Lemma}
\newtheorem*{prop*}{Proposition}
\theoremstyle{definition}
\newtheorem{dfn}[theorem]{Definition}
\newtheorem*{dfn*}{Definition}
\theoremstyle{remark}
\newtheorem{remark}[theorem]{Remark}
\let\save@mathaccent\mathaccent
\newcommand*\if@single[3]{%
  \setbox0\hbox{${\mathaccent"0362{#1}}^H$}%
  \setbox2\hbox{${\mathaccent"0362{\kern0pt#1}}^H$}%
  \ifdim\ht0=\ht2 #3\else #2\fi
  }
\newcommand*\rel@kern[1]{\kern#1\dimexpr\macc@kerna}
\newcommand*\widebar[1]{\@ifnextchar^{{\wide@bar{#1}{0}}}{\wide@bar{#1}{1}}}
\newcommand*\wide@bar[2]{\if@single{#1}{\wide@bar@{#1}{#2}{1}}{\wide@bar@{#1}{#2}{2}}}
\newcommand*\wide@bar@[3]{%
  \begingroup
  \def\mathaccent##1##2{%
    \let\mathaccent\save@mathaccent
    \if#32 \let\macc@nucleus\first@char \fi
    \setbox\z@\hbox{$\macc@style{\macc@nucleus}_{}$}%
    \setbox\tw@\hbox{$\macc@style{\macc@nucleus}{}_{}$}%
    \dimen@\wd\tw@
    \advance\dimen@-\wd\z@
    \divide\dimen@ 3
    \@tempdima\wd\tw@
    \advance\@tempdima-\scriptspace
    \divide\@tempdima 10
    \advance\dimen@-\@tempdima
    \ifdim\dimen@>\z@ \dimen@0pt\fi
    \rel@kern{0.6}\kern-\dimen@
    \if#31
      \overline{\rel@kern{-0.6}\kern\dimen@\macc@nucleus\rel@kern{0.4}\kern\dimen@}%
      \advance\dimen@0.4\dimexpr\macc@kerna
      \let\final@kern#2%
      \ifdim\dimen@<\z@ \let\final@kern1\fi
      \if\final@kern1 \kern-\dimen@\fi
    \else
      \overline{\rel@kern{-0.6}\kern\dimen@#1}%
    \fi
  }%
  \macc@depth\@ne
  \let\math@bgroup\@empty \let\math@egroup\macc@set@skewchar
  \mathsurround\z@ \frozen@everymath{\mathgroup\macc@group\relax}%
  \macc@set@skewchar\relax
  \let\mathaccentV\macc@nested@a
  \if#31
    \macc@nested@a\relax111{#1}%
  \else
    \def\gobble@till@marker##1\endmarker{}%
    \futurelet\first@char\gobble@till@marker#1\endmarker
    \ifcat\noexpand\first@char A\else
      \def\first@char{}%
    \fi
    \macc@nested@a\relax111{\first@char}%
  \fi
  \endgroup
}
\def\@setauthors{%
  \begingroup
  \def\thanks{\protect\thanks@warning}%
  \trivlist
  \centering\large \@topsep30\p@\relax
  \advance\@topsep by -\baselineskip
  \item\relax
  \author@andify\authors
  \def\\{\protect\linebreak}%
  \authors%
  \ifx\@empty\contribs
  \else
    ,\penalty-3 \space \@setcontribs
    \@closetoccontribs
  \fi
  \endtrivlist
  \endgroup
}
\def\@settitle{\begin{center}%
  \baselineskip14\p@\relax
    \normalfont\LARGE
  \@title
  \end{center}%
}
\pgfplotsset{compat=newest}
\begin{document}

\title[Uniqueness theorems]{Uniqueness theorems for 
weighted harmonic functions\\ in the upper half-plane}

\date{\today}

\author{Anders Olofsson}
\address[Anders Olofsson]{Centre for Mathematical Sciences, Lund University, Box 118, SE-221 00 Lund, Sweden}
\email{olofsson@maths.lth.se}

\author{Jens Wittsten}
\address[Jens Wittsten]{Centre for Mathematical Sciences, Lund University, Box 118, SE-221 00 Lund, Sweden, and Department of Engineering, University of Bor{\aa}s, SE-501 90 Bor{\aa}s, Sweden}
\email{jens.wittsten@math.lu.se}

\subjclass[2010]{31A05, 35A02 (primary), 31A20, 33C05 (secondary)}

\keywords{Harmonic function, uniqueness problem, open upper half-plane, hyper\-geometric function}

\begin{abstract}
We consider a class of weighted harmonic functions in the open upper half-plane
known as $\alpha$-harmonic functions. 
Of particular interest is the uniqueness problem for such functions 
subject to a vanishing Dirichlet boundary value on the real line 
and an appropriate vanishing condition at infinity. 
We find that the non-classical case ($\alpha\neq0$) allows for 
a considerably more relaxed vanishing condition at infinity 
compared to the classical case ($\alpha=0$) of usual harmonic functions 
in the upper half-plane. The reason behind this dichotomy is different geometry of zero sets of certain polynomials naturally derived from the classical binomial series. 

Our findings shed new light on 
the theory of harmonic functions, for which we provide uniqueness results under vanishing conditions at infinity along a) geodesics, and b) rays emanating from the origin. The geodesic uniqueness results require vanishing on two distinct geodesics which is best possible. The ray uniqueness results involves an arithmetic condition which we analyze by introducing the concept of an admissible function of angles. We show that the arithmetic condition is to the point and that the set of admissible functions of angles is minimal with respect to a natural partial order.
\end{abstract}

\maketitle

\section{Introduction}

Let $\Hu$ be the open upper half-plane in the complex plane $\C$ 
and consider the weighted Laplace differential operator
\begin{equation}\label{HalphaLaplacian}
\Delta_{\Hu;\alpha,z}=\partial_z (\im z)^{-\alpha}\bar\partial_z,\quad z\in\Hu,
\end{equation}
where $\partial$ and $\bar\partial$ are the usual complex partial 
derivatives and $\alpha>-1$. 
Here $\im z$ is the imaginary part of the complex number $z\in\C$ and 
the function $w_{\Hu;\alpha}(z)=(\im z)^{\alpha}$ for $z\in\Hu$ 
has an interpretation of a standard weight function for $\Hu$. 
The study of differential operators of the form \eqref{HalphaLaplacian} 
is suggested by a classical paper of Paul Garabedian  \cite{Garabedian}. 

We refer to the differential operator $\Delta_{\Hu;\alpha}$ 
in \eqref{HalphaLaplacian} as 
the $\alpha$-Laplacian for $\Hu$. 
An $\alpha$-harmonic function $u$ in $\Hu$ is 
a twice continuously differentiable function $u$ in $\Hu$ 
(in symbols: $u\in C^2(\Hu)$) such that 
$$
\Delta_{\Hu;\alpha}u=0\quad  \text{in}\  \Hu.
$$ 
Notice that $\Delta_{\Hu;0}=\partial\bar\partial$ is the usual Laplacian 
and that a $0$-harmonic function in $\Hu$ is 
a harmonic function in $\Hu$ in the usual sense. 
The restriction $\alpha>-1$ on the weight parameter ensures 
good supply of $\alpha$-harmonic functions with well-behaved 
boundary values.  
 
In this paper we address the uniqueness problem for $\alpha$-harmonic 
functions in $\Hu$, that is, 
we wish to characterize the identically zero function $u\equiv 0$ 
within the class of  $\alpha$-harmonic functions in $\Hu$. 
A natural condition is that of a vanishing boundary value  
\begin{equation}\label{vanishingDirichletbdrycondition}
\lim_{\Hu\ni z\to x}u(z)=0,\quad x\in\R,
\end{equation}
on the real line $\R$. A condition of this type is often referred to 
as a vanishing Dirichlet boundary value. 
There are plenty of non-trivial $\alpha$-harmonic 
functions in $\Hu$ satisfying \eqref{vanishingDirichletbdrycondition}. 
A simple such example is the function 
$$
u(z)=(\im z)^{\alpha+1},\quad z\in\Hu,
$$  
which is $\alpha$-harmonic in $\Hu$ and satisfies  
\eqref{vanishingDirichletbdrycondition}.
In order to obtain satisfactory results on the uniqueness problem above 
it is thus natural to complement \eqref{vanishingDirichletbdrycondition}   
with some condition(s) taking into account 
the behavior of the $\alpha$-harmonic function $u$ at infinity. 
Here we think of the point at infinity $\infty$ as a boundary point of 
$\Hu$ in the extended complex plane $\C_\infty=\C\cup\{\infty\}$. 
In the case of usual harmonic functions in $\Hu$ ($\alpha=0$) 
this problem setup is classical. 
We mention here a recent contribution by
Carlsson and Wittsten \cite{carlsson2016dirichlet} 
concerned with a uniqueness result in this flavor 
for $\alpha$-harmonic 
functions in $\Hu$ with $\alpha>-1$.
This result of Carlsson and Wittsten has served as a guidance 
for the present investigations.

We say that a function $u$ in $\Hu$ is of temperate growth at infinity 
if it satisfies an estimate of the form
$$
\lvert u(z)\rvert \leq C(\lvert z\rvert^2/\im(z))^N
$$   
for $z\in\Hu$ with $\lvert z\rvert>R$, 
where $C$, $R$ and $N$ are positive constants. 
We refer to the parameter $N$ as an order of growth at infinity 
for the function $u$.

A first main result concerns $\alpha$-harmonic functions $u$ in $\Hu$ 
that are of temperate growth at infinity. 
We prove that such a function $u$ satisfies  
\eqref{vanishingDirichletbdrycondition} if and only if  
it has the form 
\begin{equation}\label{alphaobstructionfunction}
u(z)=\sum_{k=0}^n  c_k(\im z)^{\alpha+1} p_{k,\alpha}(z),\quad z\in\Hu,
\end{equation}
for some $n\in\N=\{0,1,2,\dots\}$ and $c_0,\dots,c_n\in\C$, where  
\begin{equation*}
p_{k,\alpha}(z)=\sum_{j=0}^k\frac{(\alpha+1)_j}{j!}z^{k-j}\bar z^j
\end{equation*}
for $k=0,1,\dots$ (see 
Corollary \ref{Valphacharacterization}).
We point out that the polynomials $p_{k,\alpha}$ are naturally derived from  
the classical binomial series. 
We also establish \eqref{alphaobstructionfunction} 
under a weaker distributional version of 
\eqref{vanishingDirichletbdrycondition} 
(see Theorem \ref{alphaobstructionclassdistributionalversion}).

We denote by $\mathcal{V}_\alpha$ the set of all functions $u$ 
of the form \eqref{alphaobstructionfunction}   
for some $n\in\N$ and $c_0,\dots,c_n\in\C$.  
The set $\mathcal{V}_\alpha$ is naturally filtered in the sense that 
$$
\mathcal{V}_\alpha= \cup_{n=0}^\infty\mathcal{V}_{\alpha,n},
$$ 
where $\mathcal{V}_{\alpha,n}$ is the set of all functions of the form 
\eqref{alphaobstructionfunction} for some $c_0,\dots,c_n\in\C$.
The set $\mathcal{V}_{\alpha,n}$ has a natural structure of a 
complex vector space of finite dimension $n+1$. 
The space $\mathcal{V}_{\alpha,n}$ admits a natural description 
within the class $\mathcal{V}_{\alpha}$ using order of growth 
at infinity (see Lemma \ref{coefficientlemma} 
and Proposition \ref{Vanfromrelaxedgrowth}). 
The finer study of behaviors at infinity of functions 
in the class $\mathcal{V}_{\alpha}$ depends on the parameter $\alpha>-1$ 
and divides naturally into cases whether $\alpha\neq0$ or $\alpha=0$. 
The reason behind this dichotomy is a different geometry of 
zero sets for the polynomials $p_{k,\alpha}$.

We consider next the problem of characterizing the null function 
in the class $\mathcal{V}_\alpha$ using a vanishing condition at infinity. 
For parameters $\alpha>-1$ with $\alpha\neq0$, we prove that if 
$u\in\mathcal{V}_\alpha$ is such that 
\begin{equation}\label{sequencevanishingalphaneq0}
\lim_{j\to\infty} \frac{u(z_j)}{(\im(z_j))^{\alpha+1}}=0
\end{equation} 
for some sequence $\{z_j\}$ in $\Hu$ 
with $z_j \to\infty$ in  $\C_\infty$ as $j\to\infty$, then 
$u(z)=0$ for all $z\in\Hu$ (see Theorem \ref{Vauniquenessaneq0}).  
We emphasize the big freedom allowed in 
the choice of sequence $\{z_j\}$ 
in \eqref{sequencevanishingalphaneq0} above.  
This analysis leads to the following highly flexible uniqueness result 
for $\alpha$-harmonic functions in the case $\alpha\neq0$ as well as a distributional version thereof    
(see 
Theorem \ref{thm:distributionaluniquenessaneq0}).

\begin{theorem}\label{introuniquenessthm}
Let $\alpha>-1$ and $\alpha\ne0$.
Let $u$ be an $\alpha$-harmonic function in $\Hu$ which is 
of temperate growth at infinity. 
\begin{enumerate}
\item\label{classicalvanishingrealline}
Assume that \eqref{vanishingDirichletbdrycondition} holds.
\item 
Assume that there exists a sequence $\{z_j\}$ in $\Hu$ 
with $z_j \to\infty$ in $\C_\infty$ as $j\to\infty$ such that 
\eqref{sequencevanishingalphaneq0} holds.
\end{enumerate} 
Then $u(z)=0$ for all $z\in\Hu$.
\end{theorem}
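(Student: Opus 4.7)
The plan is to simply chain together the two main intermediate results announced earlier in the introduction. First, I apply Corollary~\ref{Valphacharacterization}: since $u$ is $\alpha$-harmonic, is of temperate growth at infinity, and satisfies the vanishing Dirichlet condition \eqref{vanishingDirichletbdrycondition} by hypothesis~(1), that corollary places $u$ in the class $\mathcal{V}_\alpha$. Concretely, there exist $n\in\N$ and coefficients $c_0,\dots,c_n\in\C$ such that
$$
u(z)=\sum_{k=0}^n c_k(\im z)^{\alpha+1}p_{k,\alpha}(z),\quad z\in\Hu,
$$
and hence $u(z)/(\im z)^{\alpha+1}$ agrees on $\Hu$ with the polynomial $P(z,\bar z)=\sum_{k=0}^n c_k p_{k,\alpha}(z)$.

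Second, I apply Theorem~\ref{Vauniquenessaneq0}, the uniqueness statement for the class $\mathcal{V}_\alpha$ under the relaxed vanishing condition \eqref{sequencevanishingalphaneq0}, which is available precisely because of the standing assumption $\alpha\neq 0$. Its hypotheses are met: the membership $u\in\mathcal{V}_\alpha$ comes from Step~1, and the required sequence $\{z_j\}\subset\Hu$ with $z_j\to\infty$ in $\C_\infty$ and $u(z_j)/(\im z_j)^{\alpha+1}\to 0$ is furnished by hypothesis~(2). The conclusion is exactly $u\equiv 0$ on $\Hu$.

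In effect, essentially all the difficulty is bundled into Theorem~\ref{Vauniquenessaneq0}; the theorem at hand is a clean packaging of it together with Corollary~\ref{Valphacharacterization}. The real obstacle, were one to prove Theorem~\ref{Vauniquenessaneq0} from scratch, is to show that a nonzero polynomial $P(z,\bar z)$ of the form above cannot tend to zero along any sequence escaping to infinity in $\Hu$; this is a statement about the geometry at infinity of the zero sets of the polynomials $p_{k,\alpha}$. The Pochhammer factor $(\alpha+1)_j/j!$ in the definition of $p_{k,\alpha}$ renders this geometry favorable precisely in the regime $\alpha\neq 0$ and obstructed in the classical case $\alpha=0$, which is the dichotomy emphasized in the abstract. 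Taking Theorem~\ref{Vauniquenessaneq0} as given, however, the present theorem follows immediately by the two steps above.
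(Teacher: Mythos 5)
Your proof is correct and follows essentially the same route as the paper: the paper's own proof invokes Theorem~\ref{alphaobstructionclass} (of which Corollary~\ref{Valphacharacterization} is a direct repackaging) to get $u\in\mathcal{V}_\alpha$, and then applies Theorem~\ref{Vauniquenessaneq0}. Your closing remarks about where the real difficulty lies (the zero sets of the $p_{k,\alpha}$ and the Enestr\"om--Kakeya-type localization for $\alpha\neq0$) accurately reflect the content of Section~\ref{sectionuniguenessalphaneq0}.
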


We now turn our attention to the case $\alpha=0$ of usual 
harmonic functions in $\Hu$. 
A function $u$ belongs to the class $\mathcal{V}_0$ if and only if 
it has the form 
$$
u(z)=\sum_{k=1}^n c_k\im(z^k),\quad z\in\Hu,
$$
for some $n\in \Z^+=\{1,2,3,\dots\}$ and $c_1,\dots,c_n\in\C$ 
(see Proposition \ref{harmonicobstructionclass}). 
Observe that the harmonic polynomial  
$$
u_k(z)= \im(z^k),\quad z\in\C,
$$ 
vanishes on a union of $k$ lines passing through the origin. 
Notice also that the zero set of $u_k$ 
intersects the unit circle at the $2k$-th roots of unity.   
These examples make evident that the flexible uniqueness results for 
$\alpha$-harmonic functions in $\Hu$ with $\alpha\neq0$ are 
no longer true when $\alpha=0$ 
(compare with Theorem \ref{introuniquenessthm} above).  

In order to obtain satisfactory uniqueness results 
for usual harmonic functions in $\Hu$ we shall restrict 
condition \eqref{sequencevanishingalphaneq0}   
to suitable classes of curves. 
We consider two such classes of curves, namely, 
geo\-desics in $\Hu$ and rays in $\Hu$ emanating from the origin.      

By a geodesic in $\Hu$ we understand a ray in $\Hu$ which 
is parallel to the imaginary axis. We prove that if   
$u\in\mathcal{V}_0$ is such that 
\begin{equation*}
\lim_{y\to+\infty}u(x+iy)/y=0
\end{equation*}
for $x=x_j\in\R$ ($j=1,2$) with $x_1\neq x_2$, 
then $u(z)=0$ for all $z\in\Hu$ (see Theorem \ref{V0uniquenessgeodesic}).
This analysis leads to corresponding uniqueness results for usual 
harmonic functions in $\Hu$ 
(see Theorems \ref{classicaluniquenessthm2geodesics} 
and \ref{distributionaluniquenessthm2geodesics}).  
We emphasize that those geodesic uniqueness results require 
vanishing on two ($2$) distinct geodesics which is best possible. 
Together with the success results for $\alpha\neq 0$, 
this marks a significant advancement 
from an earlier uniqueness result of 
Carlsson and  Wittsten \cite[Corollary 1.9]{carlsson2016dirichlet} 
for $\alpha$-harmonic functions which required vanishing on 
an interval of geodesics.

By a ray in $\Hu$ emanating from a point $a\in\R$ we understand 
a set of the form $\{a+te^{i\theta}:\ t>0\}$, where $0<\theta<\pi$. 
We shall restrict our attention to rays emanating from the origin ($a=0$). 
In view of translation invariance of the class of harmonic functions 
this restriction is minor. 
In order to discuss vanishing of functions along rays  
we introduce a notion of admissible function of angles 
which is a function element $(E,\eta)$ with $E\subset(0,\pi)$ 
and $\eta:E\to\Z^+$ having the property that for every $k\in\Z^+$ 
there exists $\theta\in E$ such that $\sin(k\theta)\neq0$ 
and $k\geq \eta(\theta)$ 
(see Definition \ref{dfnadmissiblefcnangles}).  
We prove that if $u\in\mathcal{V}_0$ and 
there exists an admissible function of angles $(E,\eta)$ 
such that 
$$
\lim_{t\to+\infty}u(te^{i\theta})/t^{\eta(\theta)}=0
$$ 
for every $\theta\in E$, 
then $u(z)=0$ for all $z\in\Hu$ (see Theorem \ref{V0uniquenessray}).  
This analysis leads to corresponding uniqueness results for usual 
harmonic functions in $\Hu$ 
(see Theorems \ref{classicalfcnofanglesvanishing}  
and \ref{fcnofanglesvanishing}).  

The notion of admissible function of angles involves an 
arithmetic element. In order to illuminate this fact 
we mention the following result.

\begin{theorem}\label{classicalgenericrayuniquenessresult}
Let $u$ be a harmonic function in the open upper half-plane $\Hu$ 
which is of temperate growth at infinity.
\begin{enumerate}
\item 
Assume that 
\eqref{vanishingDirichletbdrycondition} holds.
\item
Assume that 
\begin{equation}\label{rayvanishingcondition}
\lim_{t\to+\infty}u(te^{i\theta})/t=0
\end{equation}
for some $0<\theta<\pi$ 
which is not a rational multiple of $\pi$.
\end{enumerate} 
Then $u(z)=0$ for all $z\in\Hu$.
\end{theorem}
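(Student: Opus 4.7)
The plan is to reduce this to the characterization of $\mathcal{V}_0$. Since $u$ is harmonic (the $\alpha=0$ case of $\alpha$-harmonic), is of temperate growth at infinity, and satisfies the vanishing Dirichlet condition \eqref{vanishingDirichletbdrycondition}, Corollary \ref{Valphacharacterization} applied with $\alpha=0$ gives that $u\in\mathcal{V}_0$, and by the explicit description of $\mathcal{V}_0$ recorded after that result one has
\[
u(z)=\sum_{k=1}^{n} c_k\im(z^{k}),\qquad z\in\Hu,
\]
for some $n\in\Z^+$ and coefficients $c_1,\dots,c_n\in\C$. The content of the theorem thus becomes an elementary statement about this finite sum.

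The second step is to exploit the ray hypothesis \eqref{rayvanishingcondition}. Substituting $z=te^{i\theta}$ yields $\im(z^k)=t^{k}\sin(k\theta)$, so
\[
\frac{u(te^{i\theta})}{t}=\sum_{k=1}^{n} c_k\sin(k\theta)\,t^{k-1}.
\]
This is a complex polynomial in $t$ of degree at most $n-1$, and the hypothesis forces it to tend to $0$ as $t\to+\infty$. Comparing coefficients (letting $t\to+\infty$ and peeling off the leading nonzero term) we deduce $c_k\sin(k\theta)=0$ for every $k=1,\dots,n$.

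The final step is the arithmetic one. If $\theta\in(0,\pi)$ is not a rational multiple of $\pi$, then for every $k\in\Z^+$ we have $k\theta\notin\pi\Z$, hence $\sin(k\theta)\neq0$. Combined with the previous step this yields $c_k=0$ for all $k$, so $u\equiv0$ in $\Hu$.

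There is no real obstacle: the serious analytic work is absorbed into Corollary \ref{Valphacharacterization} (which classifies harmonic functions with vanishing boundary trace and temperate growth) and into the fact that the polynomials $p_{k,0}$ reduce to $\im(z^k)$ up to a factor. Once that reduction is in hand, the theorem is just the observation that the singleton $(E,\eta)=(\{\theta\},1)$ is an admissible function of angles in the sense of Definition \ref{dfnadmissiblefcnangles} precisely because $\theta/\pi$ is irrational, so the result is really a clean corollary of Theorem \ref{V0uniquenessray} (and thus of Theorem \ref{classicalfcnofanglesvanishing}).
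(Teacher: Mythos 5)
Your proof is correct and is essentially the paper's argument: the authors also reduce to $u\in\mathcal{V}_0$ via Theorem \ref{alphaobstructionclass} and then apply Theorem \ref{classicalfcnofanglesvanishing} with the one-point admissible function of angles $E=\{\theta\}$, $\eta(\theta)=1$, whose admissibility is exactly your observation that $\sin(k\theta)\neq0$ for all $k\in\Z^+$ when $\theta/\pi$ is irrational. You have merely unwound the proof of Theorem \ref{V0uniquenessray} into the explicit polynomial-in-$t$ computation, which you yourself note at the end.
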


The examples $u_k$ above show that the arithmetic condition 
on $\theta$ in \eqref{rayvanishingcondition} 
is to the point. 
Behind Theorem \ref{classicalgenericrayuniquenessresult} lies 
a construction of a one point admissible function of angles $(E,\eta)$, 
where $E=\{\theta\}$ and $\eta(\theta)=1$.  
The arithmetic condition on $\theta$ in \eqref{rayvanishingcondition} 
ensures that this latter function element $(E,\eta)$ is an   
admissible function of angles. 
More elaborate constructions of admissible functions of angles lead 
to corresponding uniqueness results for usual harmonic functions in $\Hu$ 
(see Corollaries \ref{genericrayvanishing}, \ref{rationalrayvanishing}, 
\ref{testthm} and \ref{testthm2}). 

In the final section we provide constructions of 
admissible functions of angles 
(see Theorems \ref{constructionfoainfinite} and \ref{constructionfoafinite}).  
The set $\mathcal{A}$ of admissible functions of angles is structured  
by a natural partial order. We show that every element in $\mathcal{A}$ 
has a lower bound which is minimal 
(see Theorem \ref{lowerboundfoa} and Lemma \ref{cfoaminimal}). 
Our constructions of admissible functions of angles yield 
precisely the minimal elements in $\mathcal{A}$ 
(see Theorem \ref{minimalelementsA}).

There is a substantial literature on 
boundary uniqueness problems for (sub-)harmonic functions 
with contributions of  
Wolf \cite{Wolf}, Shapiro \cite{ShapiroVL}, Dahlberg \cite{Dahlberg}, 
Berman and Cohn \cite{BermanCohn} and 
Borichev with collaborators \cite{BorichevCT,BorichevT} to name a few.  
As far as usual harmonic functions in $\Hu$ are concerned 
our uniqueness results supersede an earlier uniqueness result of 
Siegel and Talvila \cite[Corollary 3.1]{SiegelTalvila}. 

The $\alpha$-Laplacian (or rather its symmetric part) is also related to the Laplace-Beltrami equation in the Riemannian space defined by the metric 
$$
ds^2=x_n^{-\alpha/(n-2)}\sum_1^n dx_i^2,\quad n>2,
$$
as studied by Weinstein \cite{Weinstein} and Huber \cite{Huber}, among others. For historic reasons, solutions of said Laplace-Beltrami equation are referred to as generalized axially symmetric potentials. For a discussion on this connection and more recent applications in this direction we refer to Wittsten \cite{Wittsten}. Another related area of interest is the recent study of higher order 
Laplacians initiated by Borichev and Hedenmalm \cite{BH}.   

The present paper is rooted in previous investigations.
In an earlier paper \cite{olofsson2013poisson} we initiated a theory 
of $\alpha$-harmonic functions in the open unit disc $\D$ 
in the complex plane.  A function $u\in C^2(\D)$ is called       
$\alpha$-harmonic in $\D$ if $\Delta_{\D;\alpha}u=0$ in $\D$, 
where 
$$
\Delta_{\D;\alpha,z}=
\partial_z(1-\lvert z\rvert^2)^{-\alpha}\bar\partial _z,\quad z\in\D,
$$  
is the $\alpha$-Laplacian for $\D$ and $\alpha\in\R$. 
A main concern in this theory is the representation of an 
$\alpha$-harmonic function in $\D$ as a 
Poisson integral $u=P_\alpha[f]$ in $\D$ 
with respect to the kernel 
\begin{equation} \label{aPoissonkernel}
P_\alpha(z)=\frac{(1-\lvert z\rvert^2)^{\alpha+1}}{(1-z)(1-\bar z)^{\alpha+1}},
\quad z\in\D.
\end{equation}
We refer to the function $P_\alpha$ as 
the $\alpha$-harmonic Poisson kernel for $\D$.

We prove that a Poisson integral representation $u=P_\alpha[f]$ with $f$ 
a distribution on the unit circle $\T=\partial\D$ 
exists 
if and only if 
$u$ is $\alpha$-harmonic in $\D$, $u$ has temperate growth in $\D$ 
and a certain spectral condition is satisfied  
(see Theorem \ref{characterizationPIrepresentation}). 
This latter spectral condition is automatically satisfied 
when $\alpha$ is not a negative integer 
(see Corollary \ref{genericcharacterizationPIrepresentation}). 
For $\alpha>-1$, the distribution $f\in\De'(\T)$ has a 
natural interpretation as a (distributional) boundary limit of 
the function $u$.  
On the other hand, for $\alpha\leq-1$, 
existence of a distributional boundary limit of an $\alpha$-harmonic 
function $u$ in $\D$, forces $u$ to be analytic in $\D$ 
(see Theorem \ref{boundarybehavioralphaleq-1}). 
We thus establish Poisson integral representations 
$u=P_\alpha[f]$ with $f\in\De'(\T)$ in situations 
where a distributional boundary value of $u$ is non-existent.  
To overcome those difficulties we resort to a study of 
related hyper\-geometric functions, 
found in Section \ref{sectionhypergeometricfunction}.

A link between the 
settings of the upper half-plane $\Hu$ and the unit disc $\D$
is provided by a certain 
conformal invariance property of $\alpha$-harmonic functions 
which was recently studied by the first author \cite{olofsson2017on}. 
Let $\varphi:\D\to\Hu$ be a bi\-holomorphic map. 
For a function $u$ in $\Hu$ we consider the  weighted pull-back 
\begin{equation}\label{weightedpullback}
v(z)=u_{\varphi,\alpha}(z)=\varphi'(z)^{-\alpha/2} u(\varphi(z)),\quad z\in\D,
\end{equation}
of $u$ by $\varphi$ with respect to the parameter $\alpha$. 
Here 
the power in \eqref{weightedpullback} is defined 
in the usual way using a logarithm of $\varphi'$ in $\D$. 
We shall use the fact that the function $v$ is $\alpha$-harmonic in $\D$ 
if and only if the function $u$ is $\alpha$-harmonic in $\Hu$. 
This latter fact follows easily from \cite[Theorem 1.1]{olofsson2017on}.  
We refer to Geller \cite{Geller} or Ahern et al. \cite{ABC,AC} 
for earlier results.

Let us return to an $\alpha$-harmonic function $u$ in $\Hu$ 
satisfying some appropriate conditions, 
notably \eqref{vanishingDirichletbdrycondition} 
and temperate growth at infinity.
We consider a weighted pull-back $v$ of $u$ of the form 
\eqref{weightedpullback}. The function $v$ is $\alpha$-harmonic in $\D$ 
and we propose to study this function by means of 
its Poisson integral representation $v=P_\alpha[f]$ in $\D$. 
An ambiguity lies in the choice of bi\-holomorphic map 
$\varphi:\D\to\Hu$ which we chose as the M\"obius transformation 
\begin{equation*}
\varphi(z)=i\frac{1+z}{1-z}. 
\end{equation*}  
Notice that $\varphi(0)=i$ and $\varphi(1)=\infty$. 

From \eqref{vanishingDirichletbdrycondition} we have that 
the boundary value $f$ for $v$ vanishes on the set $\T\setminus\{1\}$.  
Standard distribution theory then dictates that 
the distribution $f\in\De'(\T)$ is a finite linear combination 
of derivatives of a Dirac mass $\delta_1$ located at the point $1$ on $\T$: 
$f=\sum_{k=0}^n c_k\delta_1^{(k)}$ in $\De'(\T)$ 
(see H\"ormander \cite[Theorem 2.3.4]{Hormander}). 
The Poisson integral $P_\alpha[\delta_1']$ is naturally interpreted as 
an angular derivative $iAP_\alpha$ of the Poisson kernel $P_\alpha$, and by iteration we find that $v=\sum_{k=0}^n c_k (iA)^kP_\alpha$ (see Corollary \ref{angularderivativePa}).
In Section \ref{sectionangularderivatives}, those angular derivatives $(iA)^k P_\alpha$ of $P_\alpha$ 
are carefully investigated using the M\"obius transformation $\varphi$ above, and in Section \ref{representationthms} 
this part of the analysis culminates in 
the proof of the representation formula \eqref{alphaobstructionfunction} 
(see Theorem \ref{alphaobstructionclass}).

\section{Series expansion of \texorpdfstring{$\alpha$}{alpha}-harmonic functions in \texorpdfstring{$\D$}{D}}\label{sectionhypergeometricfunction}

In this section we revisit the series expansion of $\alpha$-harmonic 
functions in $\D$. Of particular concern is a characterization 
of temperate growth of an $\alpha$-harmonic function in $\D$ 
in terms of polynomial growth of coefficients 
(see Theorem \ref{polynomialgrowthcoefficients}). 
The proof of this latter result depends on properties of 
related hyper\-geometric functions.

For $a\in\C$, we set $(a)_0=1$ and  
$$
(a)_k=\prod_{j=0}^{k-1}(a+j)
$$
for $k=1,2,\dots$. The numbers $(a)_k$ are known as Pochhammer symbols.   
Notice that $(a)_k=\Gamma(a+k)/\Gamma(a)$ for $k\in\N$, 
where $\Gamma$ is the standard Gamma function.

The hyper\-geometric function is the function 
defined by the power series expansion   
\begin{equation}\label{hypergeometricfunction}
F(a,b;c;z)= \sum_{k=0}^\infty \frac{(a)_k(b)_k}{(c)_k} \frac{z^k}{k!}, 
\quad z\in\D,
\end{equation}
for parameters $a,b,c\in\C$ with $c\neq 0,-1,-2,\dots$. 
Convergence in \eqref{hypergeometricfunction} follows by 
the standard ratio test.

Recall also the classical binomial series 
\begin{equation}\label{binomialseries}
(1-z)^{-a}=\sum_{k=0}^\infty \frac{(a)_k}{k!} z^k,
\quad z\in\D,
\end{equation}
for $a\in\C$. Notice that 
$$
F(a,b;b;z)=(1-z)^{-a}, 
$$
which follows from \eqref{hypergeometricfunction} 
and \eqref{binomialseries}.

We take as our starting point a certain series expansion of 
$\alpha$-harmonic functions in $\D$. 
It is known that a function $u$ is $\alpha$-harmonic in $\D$ 
if and only if it has the form 
\begin{equation}\label{generalizedpowerseries}
u(z)=\sum_{k=0}^\infty  c_{k} z^k +  
\sum_{k=1}^\infty  c_{-k}F(-\alpha,k;k+1;\lvert z\rvert^2)\bar z^k,\quad z\in\D,
\end{equation}
for some sequence $\{c_k\}_{k=-\infty}^\infty$ of complex numbers such that 
\begin{equation}\label{coefficientgrowth}
\limsup_{\lvert k\rvert\to\infty}\lvert c_k\rvert^{1/\lvert k\rvert}\leq 1, 
\end{equation}
where $F$ is the hyper\-geometric function \eqref{hypergeometricfunction} 
(see \cite[Theorem 1.2]{olofsson2013poisson}). 
Condition \eqref{coefficientgrowth} 
ensures that the series expansion \eqref{generalizedpowerseries} 
is absolutely  convergent in the space $C^\infty(\D)$ of 
in\-definitely differentiable functions in $\D$. 
As a consequence we have that $u\in C^\infty(\D)$.
We refer to Klintborg and Olofsson \cite{KO} 
for an updated account on these matters. 

The property $F(a,b;c;0)=1$ of the hyper\-geometric function 
\eqref{hypergeometricfunction} 
yields a normalization of the expansion \eqref{generalizedpowerseries}. 
As a consequence we have the coefficient formulas 
$$
c_k=\partial^k u(0)/k!  \quad \text{and}\quad    
c_{-k}=\bar\partial^k u(0)/k!
$$
for $k\in\N$, where $u$ is as in \eqref{generalizedpowerseries}  
(see \cite[Theorem 5.3]{KO}). In particular, 
an $\alpha$-harmonic function in $\D$ is uniquely determined by its 
germ at the origin.

As an example of a series expansion of 
the form \eqref{generalizedpowerseries} we mention that of 
the $\alpha$-harmonic Poisson kernel  
\begin{equation}\label{Paseriesexpansion}
P_\alpha(z)= \sum_{k=0}^\infty z^k +  
\sum_{k=1}^\infty \frac{(\alpha+1)_k}{k!}
F(-\alpha,k;k+1;\lvert z\rvert^2)\bar z^k
\end{equation}
for $z\in\D$ (see \cite[Theorem 6.3]{KO}).

We shall make use of a classical result 
known as Euler's integral formula for the hyper\-geometric function. 
This result says that   
\begin{equation}\label{EulerintegralF}
F(a,b;c;z) = \frac{\Gamma(c)}{\Gamma(b)\Gamma(c-b)} 
\int_0^1 t^{b-1}(1-t)^{c-b-1}(1-tz)^{-a} \, dt 
\end{equation}
for $z\in\D$ when $\re c>\re b>0$,  
where $\Gamma$ is the standard Gamma function 
(see \cite[Theorem 2.2.1]{AAR}).

In particular, from \eqref{EulerintegralF} we have that 
\begin{equation}\label{Ffactorintegralformula}  
F(-\alpha,k;k+1;x)=
k\int_0^1t^{k-1}(1-xt)^\alpha\, dt,\quad 0\leq x<1,
\end{equation}
for $k\in \Z^+$ and $\alpha\in\R$. 
From this latter formula we see that the function $F(-\alpha,k;k+1;\cdot)$ 
is non\-negative on the interval $[0,1)$. Furthermore, 
the function $F(-\alpha,k;k+1;\cdot)$ 
is de\-creasing on the interval $[0,1)$ if $\alpha\geq0$ and, similarly,  
the function $F(-\alpha,k;k+1;\cdot)$ 
is in\-creasing on the interval $[0,1)$ if $\alpha\leq0$.
 
A passage to the limit in \eqref{Ffactorintegralformula} shows that 
\begin{equation}\label{Gausssummationformula}
\lim_{x\to1}  F(-\alpha,k;k+1;x)=
\frac{\Gamma(k+1)\Gamma(\alpha+1)}{\Gamma(k+\alpha+1)}
\end{equation}
for $\alpha>-1$, where we have used a standard formula for 
the Beta function. 
When $\alpha\leq-1$, the quantity $F(-\alpha,k;k+1;x)$ 
diverges to $+\infty$ as $x\to1$.   

We shall need some more detailed estimates of 
the hyper\-geometric functions appearing 
in \eqref{generalizedpowerseries}. 

\begin{lemma}\label{Festimationparameter-1}
Let $k\in\Z^+$. Then 
$$
F(1,k;k+1;x)\leq\frac{k}{x}\log\Big(\frac{1}{1-x}\Big)
$$
for $0<x<1$.
\end{lemma}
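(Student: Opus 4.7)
The plan is to use the integral representation \eqref{Ffactorintegralformula} specialized to $\alpha=-1$ and then estimate the resulting integral in a straightforward way. Specifically, setting $\alpha=-1$ in the formula
$$
F(-\alpha,k;k+1;x)=k\int_0^1 t^{k-1}(1-xt)^\alpha\,dt
$$
yields
$$
F(1,k;k+1;x)=k\int_0^1 t^{k-1}(1-xt)^{-1}\,dt,
$$
which is valid for $0<x<1$ and $k\in\Z^+$ since the integrand is smooth and positive on $[0,1]$ (the hypotheses $\re c>\re b>0$ behind Euler's formula \eqref{EulerintegralF} are met with $b=k$, $c=k+1$).

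Next I would perform the substitution $s=xt$, $ds=x\,dt$, to rewrite the integral on the interval $[0,x]$:
$$
F(1,k;k+1;x)=\frac{k}{x^k}\int_0^x s^{k-1}(1-s)^{-1}\,ds.
$$
Since $k\geq1$, we have $s^{k-1}\leq x^{k-1}$ on $[0,x]$, so
$$
F(1,k;k+1;x)\leq \frac{k}{x^k}\cdot x^{k-1}\int_0^x\frac{ds}{1-s}=\frac{k}{x}\log\Big(\frac{1}{1-x}\Big),
$$
which is the claimed bound.

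There is really no obstacle here; the only point that requires a moment of care is the specialization of \eqref{Ffactorintegralformula} to the value $\alpha=-1$, for which the hyper\-geometric function still makes sense (the integral is a convergent improper integral of $t^{k-1}/(1-xt)$ on $[0,1]$ since $0<x<1$ keeps the denominator bounded away from zero). Once the integral representation is in hand, the elementary inequality $s^{k-1}\leq x^{k-1}$ on $[0,x]$ finishes the argument.
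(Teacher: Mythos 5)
Your proof is correct. You start from the same integral representation as the paper, namely \eqref{Ffactorintegralformula} with $\alpha=-1$, which gives $F(1,k;k+1;x)=k\int_0^1 t^{k-1}(1-xt)^{-1}\,dt$; the justification via Euler's formula \eqref{EulerintegralF} with $b=k$, $c=k+1$ is exactly right, and as you note the integrand is actually bounded on $[0,1]$ since $1-xt\geq 1-x>0$. Where you diverge from the paper is in how the integral is estimated: the paper integrates by parts, which produces the target quantity $\frac{k}{x}\log\big(\frac{1}{1-x}\big)$ exactly, plus the remainder $\frac{k(k-1)}{x}\int_0^1 t^{k-2}\log(1-xt)\,dt$, which is then discarded because the logarithm is negative. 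You instead substitute $s=xt$ and use the pointwise bound $s^{k-1}\leq x^{k-1}$ on $[0,x]$, arriving at the same upper bound in one step. Your route is slightly more elementary (no integration by parts, no case $k=1$ to worry about where the boundary term computation degenerates), while the paper's route has the minor advantage of exhibiting the exact deficit between the two sides, which in principle quantifies how lossy the inequality is. For the purposes of Theorem \ref{polynomialgrowthcoefficients}, where only the upper bound matters, the two arguments are interchangeable.
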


\begin{proof}
From \eqref{Ffactorintegralformula} we have that 
\begin{equation*}
F(1,k;k+1;x)=
k\int_0^1 \frac{t^{k-1}}{1-xt}\, dt
\end{equation*}
for $0\leq x<1$. 
An integration by parts shows that 
$$
F(1,k;k+1;x)=\frac{k}{x}\log\Big(\frac{1}{1-x}\Big)
+\frac{k(k-1)}{x}\int_0^1  t^{k-2}\log(1-xt)\, dt
$$ 
for $0< x<1$. Observe that the logarithm in the right\-most 
integral is negative. This yields the conclusion of the lemma.  
\end{proof} 

We shall use also another result of Euler which says that  
\begin{equation}\label{Eulerformula}
F(a,b;c;z)=(1-z)^{c-a-b}F(c-a,c-b;c;z)  
\end{equation}
for $z\in\D$ (see \cite[Theorem 2.2.5]{AAR}).

\begin{lemma}\label{Festimationparameter<-1}
Let $\alpha<-1$ and $k\in\Z^+$. Then 
$$
F(-\alpha,k;k+1;x)\leq\max\Big(1,-\frac{k}{\alpha+1}\Big)(1-x)^{\alpha+1}
$$
for $0\leq x<1$.
\end{lemma}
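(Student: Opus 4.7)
The plan is to apply Euler's transformation \eqref{Eulerformula} to isolate the singular factor $(1-x)^{\alpha+1}$, and then bound the remaining hypergeometric function by the constant $\max(1,-k/(\alpha+1))$. Choosing $a=-\alpha$, $b=k$, $c=k+1$ in \eqref{Eulerformula} gives $c-a-b=\alpha+1$, $c-a=k+1+\alpha$, $c-b=1$, so the identity reduces to
\[
F(-\alpha,k;k+1;x)=(1-x)^{\alpha+1}F(k+1+\alpha,1;k+1;x),\qquad 0\leq x<1.
\]
The task therefore becomes: show $G(x):=F(k+1+\alpha,1;k+1;x)\leq\max(1,-k/(\alpha+1))$ on $[0,1)$.

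Next I would invoke Euler's integral formula \eqref{EulerintegralF} (which is available since $\re c>\re b>0$) to represent $G$ as
\[
G(x)=k\int_0^1(1-t)^{k-1}(1-tx)^{-(k+1+\alpha)}\,dt.
\]
The sign of the exponent $-(k+1+\alpha)$ governs the estimate, so the argument splits naturally according to whether $\alpha$ lies in the range $[-(k+1),-1)$ (non-positive exponent) or in $(-\infty,-(k+1))$ (strictly positive exponent).

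In the first regime the elementary inequality $1-tx\geq 1-t$ combined with the non-positive exponent gives $(1-tx)^{-(k+1+\alpha)}\leq(1-t)^{-(k+1+\alpha)}$, and the resulting integral reduces to $k\int_0^1(1-t)^{-\alpha-2}\,dt=-k/(\alpha+1)$; the condition $\alpha\geq -(k+1)$ ensures $-k/(\alpha+1)\geq 1$, so this is consistent with the maximum. In the second regime $1-tx\leq 1$ together with a positive exponent yields the simpler bound $(1-tx)^{-(k+1+\alpha)}\leq 1$, so $G(x)\leq k\int_0^1(1-t)^{k-1}\,dt=1$; and now $-k/(\alpha+1)<1$, so again the desired bound holds. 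The edge case $\alpha=-(k+1)$ is absorbed harmlessly into the first regime, since there $G\equiv 1=-k/(\alpha+1)$.

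The main potential obstacle is spotting the right form of Euler's transformation to apply; once the singular factor $(1-x)^{\alpha+1}$ is extracted and the hypergeometric function $G$ is recognized as the quantity to be bounded by a constant, the estimates for both sub-cases are essentially elementary comparisons between $1-tx$ and either $1-t$ or $1$.
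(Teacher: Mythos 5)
Your proof is correct, and its skeleton coincides with the paper's: both apply Euler's transformation \eqref{Eulerformula} to extract the factor $(1-x)^{\alpha+1}$ and then split according to the sign of $k+\alpha+1$, bounding the residual hypergeometric function by $1$ when $k+\alpha+1\leq 0$ and by $-k/(\alpha+1)$ when $k+\alpha+1>0$. The difference lies in how the second case is handled. The paper invokes the symmetry $F(a,b;c;z)=F(b,a;c;z)$ to pass to a different Euler integral (with $b=k+\alpha+1$), bounds $1/(1-xt)$ by $1/(1-t)$, and evaluates the resulting Beta integral via Gamma-function identities to land on $-k/(\alpha+1)$. You instead keep the single representation
\[
F(k+\alpha+1,1;k+1;x)=k\int_0^1(1-t)^{k-1}(1-tx)^{-(k+1+\alpha)}\,dt
\]
in both regimes and use the pointwise comparison $(1-tx)^{-(k+1+\alpha)}\leq(1-t)^{-(k+1+\alpha)}$ (valid since the exponent is nonpositive there), which reduces the bound to the elementary integral $k\int_0^1(1-t)^{-\alpha-2}\,dt=-k/(\alpha+1)$. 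Your route is slightly more economical, avoiding the symmetry step and the Beta/Gamma computation, and it makes transparent why each piece of the maximum is the operative bound in its regime; the paper's route has the minor advantage of exhibiting the constant directly as a ratio of Gamma values. Both arguments are complete and correct.
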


\begin{proof}
We first apply \eqref{Eulerformula} to see that 
\begin{equation}\label{factorizationformula}
F(-\alpha,k;k+1;x)=(1-x)^{\alpha+1}F(k+\alpha+1,1;k+1;x) 
\end{equation}
for $0\leq x<1$. We shall divide into cases depending on whether 
$k+\alpha+1>0$ or $k+\alpha+1\leq0$.  

Assume first that $k+\alpha+1\leq0$. By \eqref{EulerintegralF} we have that
$$
F(k+\alpha+1,1;k+1;x) =k\int_0^1 (1-t)^{k-1} (1-xt)^{-(k+\alpha+1)}\, dt
$$
for $0\leq x<1$. Since  $k+\alpha+1\leq0$, 
we have that this latter hyper\-geometric function 
$F(k+\alpha+1,1;k+1;\cdot)$ is decreasing on $[0,1)$. 
Therefore $F(k+\alpha+1,1;k+1;x)\leq1$ for $0\leq x<1$. 
In view of \eqref{factorizationformula}  
this yields the conclusion of the lemma 
for $k+\alpha+1\leq0$.  

We next assume that $k+\alpha+1>0$. 
By symmetry and \eqref{EulerintegralF} we have that
\begin{align*}
F(k+\alpha+1,1;k+1;x) &=F(1,k+\alpha+1;k+1;x)\\ 
& =\frac{\Gamma(k+1)}{\Gamma(k+\alpha+1)\Gamma(-\alpha)}
\int_0^1 t^{k+\alpha}(1-t)^{-(\alpha+1)}\frac{1}{1-xt}\, dt 
\end{align*}
for $0\leq x<1$. By monotonicity we have that
\begin{align*}
F(k+\alpha+1,1;k+1;x)&\leq 
\frac{\Gamma(k+1)}{\Gamma(k+\alpha+1)\Gamma(-\alpha)}
\int_0^1 t^{k+\alpha}(1-t)^{-(\alpha+1)-1}\, dt\\ &=   
\frac{\Gamma(k+1) \Gamma(-(\alpha+1))}{\Gamma(-\alpha)\Gamma(k)}
=-\frac{k}{\alpha+1}
\end{align*}
for $0\leq x<1$, where the last two equalities follows by 
standard formulas for the Beta and Gamma  functions.   
In view of \eqref{factorizationformula}  
this yields the conclusion of the lemma 
for $k+\alpha+1>0$.  
\end{proof}

We say that a function $u$ in $\D$ is of temperate growth in $\D$
if it satisfies an estimate of the form
\begin{equation}\label{utemperategrowthdisc}
\lvert u(z)\rvert \leq C(1-\lvert z\rvert^2)^{-N}, \quad z\in\D,
\end{equation}   
for some positive constants $C$ and $N$.

\begin{theorem}\label{polynomialgrowthcoefficients}
Let $\alpha\in\R$. Let $u$ be an $\alpha$-harmonic function in $\D$ and 
consider the expansion \eqref{generalizedpowerseries}. 
Then the function $u$ is of temperate growth in $\D$ if and only if 
the sequence of coefficients $\{c_k\}_{k=-\infty}^\infty$ 
in \eqref{generalizedpowerseries} 
has at most polynomial growth.  
\end{theorem}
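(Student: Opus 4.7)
The plan is to read off the coefficients $\{c_k\}$ from $u$ by testing against $e^{-ik\theta}$ on circles $\lvert z\rvert=r$, and conversely to bound $u$ on such circles by summing the series \eqref{generalizedpowerseries} term by term.

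First I would establish the Fourier extraction formula. Since the series \eqref{generalizedpowerseries} converges in $C^\infty(\D)$ it converges uniformly on each circle $\lvert z\rvert=r$ with $0\leq r<1$, so termwise integration and orthogonality of $\{e^{ik\theta}\}$ give
\begin{equation*}
\frac{1}{2\pi}\int_0^{2\pi} u(re^{i\theta})e^{-ik\theta}\, d\theta
= \begin{cases} c_k\, r^k & \text{if } k\geq 0,\\ c_{-\lvert k\rvert}\, F(-\alpha,\lvert k\rvert;\lvert k\rvert+1;r^2)\, r^{\lvert k\rvert} & \text{if } k<0. \end{cases}
\end{equation*}

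For the \emph{only if} direction, assume \eqref{utemperategrowthdisc}. The extraction formula gives $\lvert c_k\rvert r^k\leq C(1-r^2)^{-N}$ for $k\geq 0$, and optimizing in $r$ (say $r^2=k/(k+N)$) yields a polynomial bound $\lvert c_k\rvert\ls(k+1)^N$. For $k=-m<0$ I need a polynomial lower bound for $F(-\alpha,m;m+1;r^2)$, and here I split on the sign of $\alpha$: if $\alpha\geq0$ then $F(-\alpha,m;m+1;\cdot)$ is decreasing on $[0,1)$, so by \eqref{Gausssummationformula} it is bounded below by $\Gamma(m+1)\Gamma(\alpha+1)/\Gamma(m+\alpha+1)$, which is $\gs m^{-\alpha}$; if $\alpha<0$ it is increasing, hence bounded below by $F(-\alpha,m;m+1;0)=1$. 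In either case one gets $\lvert c_{-m}\rvert\ls (m+1)^{\alpha\vee 0}(1-r^2)^{-N}/r^m$, and optimizing $r$ as before yields polynomial growth.

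For the \emph{if} direction, assume $\lvert c_k\rvert\ls(\lvert k\rvert+1)^M$. The first series in \eqref{generalizedpowerseries} is bounded by $\sum(k+1)^M\lvert z\rvert^k\ls(1-\lvert z\rvert)^{-M-1}$. For the second series I estimate $F(-\alpha,k;k+1;\lvert z\rvert^2)$ by the appropriate upper bound depending on $\alpha$: for $\alpha\geq0$ monotonicity gives $F\leq 1$; for $-1<\alpha<0$ monotonicity and \eqref{Gausssummationformula} give $F\ls k^{-\alpha}$; for $\alpha=-1$ Lemma \ref{Festimationparameter-1} gives $F\ls k\lvert z\rvert^{-2}\log(1/(1-\lvert z\rvert^2))$; for $\alpha<-1$ Lemma \ref{Festimationparameter<-1} gives $F\ls k(1-\lvert z\rvert^2)^{\alpha+1}$. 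Since $u\in C^\infty(\D)$ is automatically bounded on compact subsets, only the behaviour as $\lvert z\rvert\to 1$ matters, and combining with $\sum(k+1)^{M'}\lvert z\rvert^k\ls(1-\lvert z\rvert)^{-M'-1}$ one sees that the second series is majorized by $C(1-\lvert z\rvert^2)^{-N'}$ for a suitable $N'$, giving \eqref{utemperategrowthdisc}.

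The main obstacle is the case analysis in the range $\alpha\leq -1$, where $F(-\alpha,k;k+1;\cdot)$ diverges as $r\to 1$ and both upper and lower estimates must keep track of the $k$-dependence separately from the $(1-r^2)$-dependence. This is exactly what Lemmas \ref{Festimationparameter-1} and \ref{Festimationparameter<-1} provide, so the analysis reduces to bookkeeping in the summation.
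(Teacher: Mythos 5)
Your proposal is correct and follows essentially the same route as the paper's proof: the same Fourier extraction formula on circles, the same optimization $r\approx 1-1/\lvert k\rvert$, and the same case analysis on $\alpha$ using monotonicity of $F(-\alpha,k;k+1;\cdot)$, the Gauss limit \eqref{Gausssummationformula}, and Lemmas \ref{Festimationparameter-1} and \ref{Festimationparameter<-1} for $\alpha\leq -1$. No substantive differences to report.
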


\begin{proof}
Assume that $u$ has temperate growth in $\D$. 
We first show that the coefficient $c_k$ has 
at most polynomial growth as $k\to+\infty$. Let $k\in\N$ and $0<r<1$.  
From \eqref{generalizedpowerseries} we have that 
$$
c_k r^k=\frac{1}{2\pi}\int_\T u(re^{i\theta})e^{-ik\theta}\, d\theta.
$$ 
From the triangle inequality and \eqref{utemperategrowthdisc} we have that 
$$
\lvert c_k\rvert r^k\leq C/(1-r^2)^N\leq C/(1-r)^N
$$ 
for $k\in\N$ and $0<r<1$, where $C$ is as in \eqref{utemperategrowthdisc}. 
Choosing $r=1-1/k$ with $k$ big in this latter in\-equality, we see that 
$\lvert c_k\rvert\leq C'k^N$ as $k\to +\infty$.

We next show that the coefficient $c_k$ has 
at most polynomial growth as $k\to-\infty$.  
Let $k\in\Z^-=\Z\setminus\N$ and $0<r<1$. 
From \eqref{generalizedpowerseries} we have that 
\begin{equation}\label{coefficientformula}
c_k F(-\alpha,\lvert k\rvert; \lvert k\rvert+1 ;r^2) r^{\lvert k\rvert}
=\frac{1}{2\pi}\int_\T u(re^{i\theta})e^{-ik\theta}\, d\theta.
\end{equation}
Let us first consider the case $\alpha\leq0$. 
Since the function  
$F(-\alpha,\lvert k\rvert; \lvert k\rvert+1 ;\cdot)$ 
is increasing on $[0,1)$, we have from \eqref{coefficientformula} 
and the triangle inequality that 
$$
\lvert c_k\rvert r^{\lvert k\rvert}\leq C/(1-r)^N,
$$ 
where $C$ is as in \eqref{utemperategrowthdisc}.  
Choosing $r=1-1/\lvert k\rvert$ in this latter inequality we see that 
$\lvert c_k\rvert\leq C'\lvert k\rvert^N$ as $k\to -\infty$.  

Let us now consider the case $\alpha\geq0$. 
Since the function  
$F(-\alpha,\lvert k\rvert; \lvert k\rvert+1 ;\cdot)$ 
is decreasing on $[0,1)$, we have from \eqref{coefficientformula} 
and the triangle inequality that 
\begin{equation}\label{prelcoefficientcontrol}
\lvert c_k\rvert 
\frac{\Gamma(\lvert k\rvert+1)\Gamma(\alpha+1)}{\Gamma(\lvert k\rvert+\alpha+1)}r^{\lvert k\rvert}
\leq C/(1-r)^N,
\end{equation}
where we have used \eqref{Gausssummationformula} and  
$C$ is as in \eqref{utemperategrowthdisc}.
Stirling's formula ensures that 
a quotient of Gamma functions $\Gamma(x)/\Gamma(x+\alpha)$ behaves 
asymptotically as $1/x^\alpha$ when $x\to+\infty$ (\cite[Section 1.4]{AAR}). 
Choosing $r=1-1/\lvert k\rvert$ in \eqref{prelcoefficientcontrol} 
we see that 
$\lvert c_k\rvert\leq C'\lvert k\rvert^{N+\alpha}$ as $k\to -\infty$.  

Assume now that the sequence of coefficients $\{c_k\}_{k=-\infty}^\infty$ 
in \eqref{generalizedpowerseries} has at most polynomial 
polynomial growth, that is, 
$\lvert c_k\rvert\leq C(1+\lvert k\rvert)^N$ for $k\in\Z$, 
where $N\geq0$.  
We shall show that the function $u$ has temperate growth in $\D$.
We consider first the left\-most sum 
$$
f(z)= \sum_{k=0}^\infty  c_{k} z^k, \quad z\in\D,  
$$
appearing in \eqref{generalizedpowerseries}. Observe that 
$$
(1+k)^N\leq \frac{(k+N)!}{k!}= N! \frac{1}{k!} \prod_{j=1}^k(N+j)
= N!\frac{(N+1)_k}{k!}
$$ 
for $k\in\N$.
By the triangle inequality we have that 
$$
\lvert f(z)\rvert\leq C \sum_{k=0}^\infty  (1+k)^N \lvert z\rvert^k
\leq C N! \sum_{k=0}^\infty \frac{(N+1)_k}{k!} \lvert z\rvert^k
= C N! (1-\lvert z\rvert)^{-(N+1)}
$$
for $z\in\D$, where in the last equality we have used \eqref{binomialseries}. 
This proves that the function $f$ above 
has temperate growth in $\D$.

We consider next the function 
$$
g(z)=\sum_{k=1}^\infty  c_{-k}F(-\alpha,k;k+1;\lvert z\rvert^2)\bar z^k,
\quad z\in\D,
$$
appearing in \eqref{generalizedpowerseries}. 
Assume first that $\alpha\geq0$. 
Then the function $F(-\alpha,k;k+1;\cdot)$ is decreasing on $[0,1)$. 
From the triangle inequality we have that
$$
\lvert g(z)\rvert\leq  \sum_{k=1}^\infty  \lvert c_{-k}\rvert \lvert z\rvert^k
\leq C \sum_{k=1}^\infty  (1+k)^N \lvert z\rvert^k
\leq C N! (1-\lvert z\rvert)^{-(N+1)}
$$
for $z\in\D$, where the last two in\-equalities follows as 
in the previous paragraph.
This proves that the function $g$ above 
has temperate growth in $\D$ if $\alpha\geq0$.

Assume next that $-1<\alpha\leq0$. In this case 
the function $F(-\alpha,k;k+1;\cdot)$ is increasing on $[0,1)$. 
By the triangle inequality and \eqref{Gausssummationformula} 
we have that 
$$
\lvert g(z)\rvert\leq \sum_{k=1}^\infty \lvert c_{-k}\rvert   
\frac{\Gamma(k+1)\Gamma(\alpha+1)}{\Gamma(k+\alpha+1)} \lvert z\rvert^k
$$
for $z\in\D$.
Since $\Gamma(x)/\Gamma(x+\alpha)$ behaves 
asymptotically as $1/x^\alpha$ when $x\to+\infty$ we deduce as above that the function $g$ 
has temperate growth in $\D$ if $-1<\alpha\leq0$.   

Assume next that $\alpha=-1$. 
By the triangle inequality and Lemma \ref{Festimationparameter-1}
we have that 
$$
\lvert g(z)\rvert\leq \sum_{k=1}^\infty \lvert c_{-k}\rvert 
\frac{k}{\lvert z\rvert^2}\log\Big(\frac{1}{1-\lvert z\rvert^2}\Big)  
\lvert z\rvert^k\leq 
\frac{C}{\lvert z\rvert^2}\log\Big(\frac{1}{1-\lvert z\rvert^2}\Big)   
\sum_{k=1}^\infty (1+k)^{N+1} \lvert z\rvert^k
$$
for $z\in\D$. 
We can now deduce as above that the function $g$ 
has temperate growth in $\D$ if $\alpha=-1$.   

Assume finally that $\alpha<-1$. 
By the triangle inequality and Lemma \ref{Festimationparameter<-1}
we have that  
$$
\lvert g(z)\rvert\leq \sum_{k=1}^\infty \lvert c_{-k}\rvert 
kC(\alpha) (1-\lvert z\rvert^2)^{\alpha+1} \lvert z\rvert^k\leq 
C C(\alpha)  (1-\lvert z\rvert^2)^{\alpha+1} 
\sum_{k=1}^\infty (1+k)^{N+1} \lvert z\rvert^k
$$
for $z\in\D$, where $C(\alpha)=\max(1,-1/(\alpha+1))$.  
We can then once again deduce as above that the function $g$ 
has temperate growth in $\D$ if $\alpha<-1$.  
\end{proof}

The case division in the proof of 
Theorem \ref{polynomialgrowthcoefficients} depends on different 
behaviors of the hyper\-geometric functions appearing in 
\eqref{generalizedpowerseries}. 
One can notice that the quantity $F(-\alpha,k;k+1;x)$ is 
decreasing in $\alpha\in\R$. 
This observation leads to a shorter proof of 
Theorem \ref{polynomialgrowthcoefficients} 
with less precision. An earlier 
result can be found in Olofsson \cite[Proposition 3.2]{O14}.

\section{Poisson integrals of distributions}\label{sectionpoissonintegrals}

A purpose of this section is to further develop 
a theory of Poisson integral representations of $\alpha$-harmonic 
functions in $\D$. Of particular interest is a characterization 
of Poisson integrals of distributions on $\T$ 
(see Theorem \ref{characterizationPIrepresentation} and 
Corollary \ref{genericcharacterizationPIrepresentation}). 
Along the way we introduce some notation needed later.

We denote by $\De'(\T)$ the space of distributions on $\T$. 
An integrable function $f\in L^1(\T)$ on $\T$ is 
identified with the distribution 
$$
\langle f,\varphi\rangle
=\frac{1}{2\pi}\int_\T f(e^{i\theta})\varphi(e^{i\theta})\, d\theta,
\quad \varphi\in C^\infty(\T),
$$
where $C^\infty(\T)$ is the space of indefinitely differentiable 
test functions on $\T$.
The space $\De'(\T)$ is  topologized in the usual way using the semi-norms 
$$
\De'(\T)\ni f\mapsto \lvert \langle f,\varphi\rangle \rvert 
$$ 
for $\varphi\in C^\infty(\T)$. 
Notice that $f_k\to f$ in $\De'(\T)$  as $k\to\infty$ means that 
the limit 
$\lim_{k\to\infty} \langle f_k,\varphi\rangle =\langle f,\varphi\rangle$ 
holds for every $\varphi\in C^\infty(\T)$. 

Let 
$$
\phi_k(e^{i\theta})=e^{ik\theta},\quad e^{i\theta}\in\T,
$$
for $k\in\Z$ be the exponential monomials on $\T$. 
The Fourier coefficients of a distribution $f\in \De'(\T)$ 
are defined by 
$$
\hat{f}(k)=\langle f,\phi_{-k} \rangle,\quad  k\in\Z.
$$ 
A distribution on $\T$ is uniquely determined by its sequence of 
Fourier coefficients. 
It is well-known that a sequence of complex numbers $\{c_k\}_{k=-\infty}^\infty$  
is of at most polynomial growth if and only if it is 
the sequence of Fourier coefficients for some distribution on $\T$, 
that is, there exists $f\in \De'(\T)$ such that 
$\hat{f}(k)=c_k$ for $k\in\Z$.

The derivative of a distribution $f\in \De'(\T)$ is 
the distribution $f'$ in $\De'(\T)$ determined by 
$(f')\hat{\ }(k)=ik\hat f(k)$ for $k\in\Z$. 

The convolution $h=f*g$ of the distributions $f\in \De'(\T)$ 
and $g\in \De'(\T)$ is the  distribution $h\in \De'(\T)$ determined by 
$\hat{h}(k)=\hat{f}(k)\hat{g}(k)$ for $k\in\Z$. 
Notice that $f*g\in C^\infty(\T)$ if $f\in \De'(\T)$ and $g\in C^\infty(\T)$. 
 
For a suitably smooth function $u$ in $\D$ we set 
\begin{equation}\label{urfcn}
u_r(e^{i\theta})=u(re^{i\theta}),\quad e^{i\theta}\in\T,
\end{equation}
for $0\leq r <1$.   
Notice that the function $u_r$ is essentially the restriction of $u$ to 
the circle $\{z\in\C:\ \lvert z\rvert=r\}$. 

Recall formula \eqref{aPoissonkernel}. 
The $\alpha$-harmonic Poisson integral of a distribution $f\in\De'(\T)$ 
is defined by 
$$
P_\alpha[f](z)=(P_{\alpha,r}\ast f)(e^{i\theta}),\quad z=re^{i\theta}\in\D,
$$
where $\ast$ denotes convolution, 
$P_{\alpha,r}$ is defined in accordance with \eqref{urfcn}, 
$r\geq0$ and $e^{i\theta}\in\T$. 
Observe that 
$$
P_\alpha[f](z)=\frac{1}{2\pi}\int_\T P_{\alpha}(ze^{-i\tau}) f(e^{i\tau})\, d\tau, 
\quad z\in\D,
$$
if $f\in L^1(\T)$. 

We next calculate the series expansion of the Poisson integral.

\begin{prop}\label{PIseriesexpansion} 
Let $\alpha\in\R$ and $f\in\De'(\T)$. Then  
\begin{equation*}
P_\alpha[f](z)= \sum_{k=0}^\infty \hat{f}(k) z^k +  
\sum_{k=1}^\infty \hat{f}(-k)
\frac{(\alpha+1)_k}{k!}
F(-\alpha,k;k+1;\lvert z\rvert^2)\bar z^k
\end{equation*}
for $z\in\D$, where $\hat{f}(k)$ is the $k$-th Fourier coefficient of $f$. 
\end{prop}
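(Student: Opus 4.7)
The plan is to compute the Fourier coefficients of $P_{\alpha,r}$ from the known series expansion \eqref{Paseriesexpansion} of the Poisson kernel, apply the convolution theorem, and then resum in terms of $z = re^{i\theta}$.

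First I would specialize \eqref{Paseriesexpansion} to the circle $\lvert z \rvert = r$. Writing $z = re^{i\theta}$ gives
$$
P_{\alpha,r}(e^{i\theta}) = \sum_{k=0}^\infty r^k e^{ik\theta} + \sum_{k=1}^\infty \frac{(\alpha+1)_k}{k!}F(-\alpha,k;k+1;r^2)\, r^k e^{-ik\theta},
$$
so that for fixed $0\leq r<1$ the Fourier coefficients of $P_{\alpha,r}\in C^\infty(\T)$ are
$$
\widehat{P_{\alpha,r}}(k) = \begin{cases} r^k, & k\geq 0, \\[1mm] \dfrac{(\alpha+1)_{\lvert k\rvert}}{\lvert k\rvert!}\, F(-\alpha,\lvert k\rvert;\lvert k\rvert+1;r^2)\, r^{\lvert k\rvert}, & k<0. \end{cases}
$$
The estimates established in Lemmas \ref{Festimationparameter-1} and \ref{Festimationparameter<-1} (together with \eqref{Gausssummationformula} when $\alpha>-1$, and trivial monotonicity bounds when $\alpha\geq0$) show that for fixed $r<1$ these coefficients decay like $r^{\lvert k\rvert}$ times at worst a polynomial factor in $\lvert k\rvert$.

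Next I would invoke the convolution theorem for distributions on $\T$: the coefficients of $P_\alpha[f](re^{i\theta}) = (P_{\alpha,r}\ast f)(e^{i\theta})$ are
$$
(P_{\alpha,r}\ast f)\,\widehat{\ }\,(k) = \widehat{P_{\alpha,r}}(k)\, \hat{f}(k).
$$
Since $f\in\De'(\T)$, the sequence $\{\hat{f}(k)\}$ has at most polynomial growth, while $\widehat{P_{\alpha,r}}(k)$ has the decay noted above, so the product is absolutely summable in $k$. Hence $P_{\alpha,r}\ast f$ is a smooth function on $\T$ whose Fourier series converges to it pointwise:
$$
(P_{\alpha,r}\ast f)(e^{i\theta}) = \sum_{k\in\Z} \widehat{P_{\alpha,r}}(k)\,\hat{f}(k)\,e^{ik\theta}.
$$

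Finally I would split the sum into the ranges $k\geq 0$ and $k\leq -1$, substitute the explicit formulas for $\widehat{P_{\alpha,r}}(k)$, and use $r^k e^{ik\theta} = z^k$ for $k\geq0$ and $r^k e^{-ik\theta} = \bar z^k$ for $k\geq1$ to obtain precisely
$$
P_\alpha[f](z) = \sum_{k=0}^\infty \hat{f}(k)\, z^k + \sum_{k=1}^\infty \hat{f}(-k)\,\frac{(\alpha+1)_k}{k!}\,F(-\alpha,k;k+1;\lvert z\rvert^2)\,\bar z^k.
$$
The only subtle point is verifying that the Fourier inversion is legitimate, which is where the growth estimates on $F(-\alpha,k;k+1;r^2)$ from Section \ref{sectionhypergeometricfunction} are needed; beyond that the argument is essentially a bookkeeping exercise in Fourier analysis on $\T$.
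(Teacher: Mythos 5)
Your proposal is correct and follows essentially the same route as the paper: restrict the series expansion \eqref{Paseriesexpansion} of $P_\alpha$ to the circle of radius $r$, use that convolution with $f$ multiplies Fourier coefficients (which is how the paper defines convolution in $\De'(\T)$), and resum in terms of $z=re^{i\theta}$. The paper's version is terser and does not spell out the summability of $\widehat{P_{\alpha,r}}(k)\hat f(k)$, so your explicit appeal to the hypergeometric estimates of Section \ref{sectionhypergeometricfunction} is a reasonable (if not strictly necessary, given \eqref{coefficientgrowth}) way to justify the termwise manipulation.
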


\begin{proof} 
Let $0\leq r<1$.
From \eqref{Paseriesexpansion} we have that 
$$
P_{\alpha,r}(e^{i\theta})=  
\sum_{k=0}^\infty r^k e^{ik\theta} +  
\sum_{k=1}^\infty \frac{(\alpha+1)_k}{k!}
F(-\alpha,k;k+1;r^2) r^k e^{-ik\theta}
$$
for $e^{i\theta}\in\T$. Passing to the convolution we have that 
\begin{align*}
P_\alpha[f](z)&=(P_{\alpha,r}\ast f)(e^{i\theta})= 
\sum_{k=0}^\infty r^k\hat{f}(k) e^{ik\theta}\\ & \  +  
\sum_{k=1}^\infty \frac{(\alpha+1)_k}{k!}
F(-\alpha,k;k+1;r^2) r^k \hat{f}(-k) e^{-ik\theta}
\end{align*}
for $z=re^{i\theta}\in\D$ with $r\geq0$ and $e^{i\theta}\in\T$. 
This yields the conclusion of the proposition.
\end{proof}

Notice that the expansion in Proposition \ref{PIseriesexpansion} 
is a series expansion of the form \eqref{generalizedpowerseries}. 
As a consequence, we have that  
the function $P_\alpha[f]$ is $\alpha$-harmonic in $\D$.  
We record also that 
\begin{equation}\label{coefficientbinomialseries}
\frac{(\alpha+1)_k}{k!}=\frac{\Gamma(\alpha+k+1)}{\Gamma(\alpha+1)\Gamma(k+1)}\sim \frac{1}{\Gamma(\alpha+1)}(1+k)^\alpha
\end{equation}
as $k\to+\infty$ which follows by Stirling's formula.

Recall the notion of Fourier spectrum of a distribution on $\T$ 
defined by 
$$
\spec(f)=\{k\in\Z:\ \hat{f}(k)\neq 0\}
$$ 
for $f\in\De'(\T)$.
We shall need a similar notion of spectrum of an $\alpha$-harmonic 
function in $\D$. Recall that an $\alpha$-harmonic function in $\D$ 
is uniquely determined by its 
sequence of coefficients in \eqref{generalizedpowerseries}.    
For such a function $u$ we set 
$$
\spec(u)=\{k\in\Z:\ c_k\neq 0\},
$$ 
where the $c_k$'s are as in \eqref{generalizedpowerseries}. 
Observe that $\spec(u)=\emptyset$ if and only if $u=0$.  
 
Let us denote by $\Z^-$ the set of negative integers. 
In view of the series expansion \eqref{Paseriesexpansion} of 
the function $P_\alpha$ we have that  
$\spec(P_\alpha)=\Z$ if $\alpha\in \R\setminus \Z^-$ whereas 
$$
\spec(P_\alpha)=\{\alpha+1,\ \alpha+2,\dots \}
$$
if $\alpha\in\Z^-$.

\begin{cor}\label{spectraofPI} 
Let $\alpha\in\R$ and $f\in\De'(\T)$. Then 
$$
\spec(P_\alpha[f])=\spec(P_\alpha)\cap\spec(f).
$$
In particular, 
$P_\alpha[f]=0$ if $\alpha\in\Z^-$ and 
$\spec(f)\subset\{k\in\Z:\ k\leq\alpha\}$.
\end{cor}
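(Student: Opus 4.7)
The plan is to simply read off the coefficients of $P_\alpha[f]$ from the series expansion in Proposition \ref{PIseriesexpansion} and compare them with the canonical generalized power series \eqref{generalizedpowerseries}, which is the basis for the definition of $\spec(\cdot)$ for $\alpha$-harmonic functions. Since an $\alpha$-harmonic function is uniquely determined by its coefficients in \eqref{generalizedpowerseries}, matching terms will identify the spectrum directly.

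First, I would write out: for the expansion of $P_\alpha[f]$ given by Proposition \ref{PIseriesexpansion}, the coefficients in the generalized power series \eqref{generalizedpowerseries} are
\[
c_k = \hat{f}(k) \quad (k\geq 0), \qquad c_{-k}=\hat f(-k)\,\frac{(\alpha+1)_k}{k!}\quad (k\geq 1).
\]
For $k\geq0$ the condition $c_k\neq0$ is just $k\in\spec(f)$, and since nonnegative integers always lie in $\spec(P_\alpha)$ (the leading sum in \eqref{Paseriesexpansion} has all coefficients equal to $1$), this gives $\{k\geq0\}\cap\spec(P_\alpha[f]) = \{k\geq0\}\cap\spec(P_\alpha)\cap\spec(f)$. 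For $k=-m$ with $m\geq1$, the factor $(\alpha+1)_m/m!$ vanishes precisely when $\alpha\in\{-1,-2,\ldots,-m\}$, i.e.\ when $-m\leq\alpha$ with $\alpha\in\Z^-$; this is exactly the condition that excludes $-m$ from $\spec(P_\alpha)$, as recorded by the description $\spec(P_\alpha)=\{\alpha+1,\alpha+2,\dots\}$ for $\alpha\in\Z^-$ and $\spec(P_\alpha)=\Z$ otherwise. Hence $-m\in\spec(P_\alpha[f])$ if and only if both $-m\in\spec(f)$ and $-m\in\spec(P_\alpha)$. Combining the two cases yields $\spec(P_\alpha[f])=\spec(P_\alpha)\cap\spec(f)$.

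The last assertion then follows at once: when $\alpha\in\Z^-$ and $\spec(f)\subset\{k:k\leq\alpha\}$, we have $\spec(P_\alpha)\cap\spec(f)\subset\{k\geq\alpha+1\}\cap\{k\leq\alpha\}=\emptyset$, so $P_\alpha[f]$ has empty spectrum and is therefore identically zero.

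There is really no serious obstacle; the only point requiring care is confirming that the Pochhammer factor $(\alpha+1)_k/k!$ vanishes on exactly the right set of $(\alpha,k)$, which is immediate from the product formula $(\alpha+1)_k=(\alpha+1)(\alpha+2)\cdots(\alpha+k)$. The key conceptual observation is that the Poisson kernel acts as a Fourier multiplier on $\De'(\T)$ whose symbol is nonzero precisely on $\spec(P_\alpha)$, so the spectrum of the output is the intersection of the symbol's support with $\spec(f)$.
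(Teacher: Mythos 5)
Your argument is correct and is essentially the paper's own proof: the paper simply declares the identity ``evident from Proposition \ref{PIseriesexpansion}'' and notes that the hypothesis in the last assertion forces $\spec(P_\alpha)\cap\spec(f)=\emptyset$, which is exactly the coefficient comparison you carry out in detail. Your explicit check that the Pochhammer factor $(\alpha+1)_k/k!$ vanishes precisely off $\spec(P_\alpha)$ is the right (and only) point needing verification.
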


\begin{proof}
The result is evident from Proposition \ref{PIseriesexpansion}.
When $\alpha\in\Z^-$ 
the inclusion $\spec(f)\subset\{k\in\Z:\ k\leq\alpha\}$ 
ensures that 
$\spec(P_\alpha)\cap\spec(f)=\emptyset$.
\end{proof}

We next turn our attention to a characterization of 
Poisson integrals of distributions.

\begin{theorem}\label{characterizationPIrepresentation}
Let $\alpha\in \R$. 
Then a function $u$ in $\D$ 
has the form of a Poisson integral $u=P_\alpha[f]$ in $\D$ 
for some $f\in\De'(\T)$ if and only if $u$ is $\alpha$-harmonic in $\D$, 
$u$ has temperate growth in $\D$, and $\spec(u)\subset\spec(P_\alpha)$.
\end{theorem}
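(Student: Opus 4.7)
The plan is to read off both directions from the series expansion in Proposition \ref{PIseriesexpansion}, which identifies the coefficients of $P_\alpha[f]$ in the canonical form \eqref{generalizedpowerseries} as $c_k = \hat f(k)$ for $k \geq 0$ and $c_{-k} = \hat f(-k)(\alpha+1)_k/k!$ for $k \geq 1$. The whole matter then reduces to matching coefficients and controlling polynomial growth, with Theorem \ref{polynomialgrowthcoefficients} providing the dictionary between temperate growth of $u$ in $\D$ and at-most-polynomial growth of $\{c_k\}$.

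For necessity, if $u = P_\alpha[f]$ then Proposition \ref{PIseriesexpansion} exhibits $u$ in the form \eqref{generalizedpowerseries} and hence as $\alpha$-harmonic in $\D$, while the spectral inclusion $\spec(u) \subset \spec(P_\alpha)$ is Corollary \ref{spectraofPI}. Since the Fourier coefficients of any distribution on $\T$ have at most polynomial growth, the asymptotic \eqref{coefficientbinomialseries}, namely $(\alpha+1)_k/k! \sim (1+k)^\alpha/\Gamma(\alpha+1)$ as $k\to +\infty$, shows that $\{c_k\}$ is likewise polynomially bounded, and Theorem \ref{polynomialgrowthcoefficients} yields temperate growth of $u$ in $\D$.

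For sufficiency, assume $u$ is $\alpha$-harmonic of temperate growth in $\D$ with $\spec(u) \subset \spec(P_\alpha)$, and let $\{c_k\}$ denote its coefficients from \eqref{generalizedpowerseries}; Theorem \ref{polynomialgrowthcoefficients} then gives polynomial growth of these coefficients. The natural candidate is the distribution $f$ specified through its Fourier coefficients by $\hat f(k) = c_k$ for $k \geq 0$ and $\hat f(-k) = c_{-k}\, k!/(\alpha+1)_k$ for $k \geq 1$. Inverting \eqref{coefficientbinomialseries} gives $k!/(\alpha+1)_k \sim \Gamma(\alpha+1)(1+k)^{-\alpha}$, so the candidate sequence is polynomially bounded and therefore defines some $f \in \De'(\T)$, whereupon Proposition \ref{PIseriesexpansion} matches coefficients to return $u = P_\alpha[f]$.

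The main obstacle is the exceptional case $\alpha \in \Z^-$, where $(\alpha+1)_k = 0$ for $k \geq -\alpha$ and the formula for $\hat f(-k)$ becomes undefined on those indices. Here the spectral hypothesis is decisive: since $\spec(P_\alpha) = \{\alpha+1,\alpha+2,\dots\}$ in this case, the inclusion $\spec(u)\subset\spec(P_\alpha)$ forces $c_{-k} = 0$ on precisely those indices where $(\alpha+1)_k$ vanishes, so one simply sets $\hat f(-k) = 0$ there and uses the division formula on the remaining indices. The resulting sequence is still polynomially bounded and hence defines the desired distribution $f$, completing the argument.
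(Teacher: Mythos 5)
Your proposal is correct and follows essentially the same route as the paper: both directions are read off from Proposition \ref{PIseriesexpansion} together with the coefficient dictionary of Theorem \ref{polynomialgrowthcoefficients}, the asymptotics \eqref{coefficientbinomialseries} control the division by $(\alpha+1)_k/k!$, and the exceptional case $\alpha\in\Z^-$ is handled exactly as in the paper by using the spectral hypothesis to set the Fourier coefficients to zero on the indices where $(\alpha+1)_k$ vanishes. No substantive differences.
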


\begin{proof}
Assume first that  $u=P_\alpha[f]$ in $\D$ for some $f\in\De'(\T)$.  
From a well-known characterization of Fourier coefficients of 
distributions on $\T$ we know that the sequence of 
Fourier coefficients $\{\hat{f}(k)\}_{k=-\infty}^\infty$ has 
at most polynomial growth. 
Proposition \ref{PIseriesexpansion} supplies us with 
the series expansion of $u$. Clearly $u$ is $\alpha$-harmonic in $\D$. 
From Corollary \ref{spectraofPI} we have that  
$\spec(u)\subset\spec(P_\alpha)$.
By Theorem \ref{polynomialgrowthcoefficients} we conclude that 
$u$ has at most temperate growth in $\D$. 

Assume next that $u$ is $\alpha$-harmonic in $\D$, 
$u$ has temperate growth in $\D$, and $\spec(u)\subset\spec(P_\alpha)$.
Consider the series expansion \eqref{generalizedpowerseries}. 
By Theorem \ref{polynomialgrowthcoefficients} we conclude that 
the sequence of coefficients $\{c_k\}_{k=-\infty}^\infty$ 
in \eqref{generalizedpowerseries}
has at most polynomial growth. 
We set $a_k=c_k$ for $k\in\N$, 
$$
a_k=\frac{(-k)!}{(\alpha+1)_{-k}}c_{k}
$$ 
for $k\in\Z^-$ and $k\in \spec(P_\alpha)$, 
and $a_k=0$ for $k\in\Z^-$ and $k\not\in \spec(P_\alpha)$. 
From \eqref{coefficientbinomialseries} we have that 
the sequence $\{a_k\}_{k=-\infty}^\infty$ is of polynomial growth.
From a well-known characterization of Fourier coefficients of 
distributions on $\T$ there exists $f\in\De'(\T)$ such that 
$\hat{f}(k)=a_k$ for $k\in\Z$.
Since $\spec(u)\subset\spec(P_\alpha)$, 
we have by Proposition \ref{PIseriesexpansion} that 
$u=P_\alpha[f]$ in $\D$.     
\end{proof}

The spectral condition in Theorem \ref{characterizationPIrepresentation}  
is redundant when 
$\alpha$ is not a negative integer.

\begin{cor}\label{genericcharacterizationPIrepresentation}
Let $\alpha\in \R\setminus\Z^-$. 
Then a function $u$ in $\D$ 
has the form of a Poisson integral $u=P_\alpha[f]$ in $\D$ 
for some $f\in\De'(\T)$ if and only if it is $\alpha$-harmonic in $\D$ 
and of temperate growth there.
\end{cor}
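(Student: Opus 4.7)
The plan is to deduce this corollary directly from Theorem \ref{characterizationPIrepresentation} by observing that, for $\alpha \in \R \setminus \Z^-$, the spectral condition $\spec(u) \subset \spec(P_\alpha)$ appearing there is automatic and therefore drops out of the characterization.

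Concretely, I would first recall the computation of $\spec(P_\alpha)$ made just before Corollary \ref{spectraofPI}: inspection of the series expansion \eqref{Paseriesexpansion} shows that when $\alpha \in \R \setminus \Z^-$, none of the coefficients in the expansion of $P_\alpha$ vanish, so $\spec(P_\alpha) = \Z$. (The coefficients of the analytic part are all $1$, and the coefficients $(\alpha+1)_k/k!$ of the anti-analytic part are nonzero precisely when $\alpha$ is not a negative integer.) In particular the inclusion $\spec(u) \subset \spec(P_\alpha)$ holds trivially for any function $u$ whose spectrum is defined, since $\spec(u) \subset \Z$ by definition.

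With this in hand, the two implications are immediate. If $u = P_\alpha[f]$ for some $f \in \De'(\T)$, then Theorem \ref{characterizationPIrepresentation} gives that $u$ is $\alpha$-harmonic and of temperate growth in $\D$. Conversely, if $u$ is $\alpha$-harmonic in $\D$ and of temperate growth there, then, since $\spec(u) \subset \Z = \spec(P_\alpha)$, the spectral hypothesis of Theorem \ref{characterizationPIrepresentation} is satisfied, and the theorem produces the desired $f \in \De'(\T)$ with $u = P_\alpha[f]$ in $\D$.

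The only step that requires any thought is verifying that $\spec(P_\alpha) = \Z$ when $\alpha \notin \Z^-$, but this is a direct reading of the expansion \eqref{Paseriesexpansion}, since $(\alpha+1)_k = \Gamma(\alpha+k+1)/\Gamma(\alpha+1)$ vanishes for some $k \in \Z^+$ exactly when $\alpha+1$ is a non-positive integer, i.e.\ when $\alpha \in \Z^-$. There is no serious obstacle; the corollary is essentially a bookkeeping remark on top of the preceding theorem.
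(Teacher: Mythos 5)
Your proposal is correct and follows exactly the paper's own route: the proof of Corollary \ref{genericcharacterizationPIrepresentation} simply recalls that $\spec(P_\alpha)=\Z$ when $\alpha\in\R\setminus\Z^-$ (established from the expansion \eqref{Paseriesexpansion} just before Corollary \ref{spectraofPI}) and then invokes Theorem \ref{characterizationPIrepresentation}. Your additional verification that $(\alpha+1)_k\neq0$ precisely when $\alpha\notin\Z^-$ is a correct spelling-out of that earlier observation.
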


\begin{proof}
Recall that $\spec(P_\alpha)=\Z$ in the present situation. 
The result follows from Theorem \ref{characterizationPIrepresentation}.
\end{proof}

The $\alpha$-harmonic Poisson kernel $P_\alpha$ has bounded $L^1$-means 
when $\alpha>-1$ (see Olofsson \cite[Theorem 3.1]{O14}). 
As a consequence, for $\alpha>-1$, 
one has that $u_r\to f$ in $\De'(\T)$ as $r\to1$ 
if $f\in\De'(\T)$ and $u=P_\alpha[f]$, 
where $u_r$ is as in \eqref{urfcn}.

The boundary behavior of $\alpha$-harmonic functions in $\D$ 
is conceptually different when  $\alpha\leq-1$. 
The next result exemplifies this fact.  

\begin{theorem}\label{boundarybehavioralphaleq-1}
Let $\alpha\leq-1$ and let $u$ be an $\alpha$-harmonic function in $\D$. 
Assume that the limit $f=\lim_{r\to1}u_r$ in $\De'(\T)$ exists, 
where $u_r$ is as in \eqref{urfcn}. 
Then $u$ is analytic in $\D$.
\end{theorem}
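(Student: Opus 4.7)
The plan is to extract information from the series expansion \eqref{generalizedpowerseries} of $u$ and exploit the blow-up of the hyper\-geometric factor $F(-\alpha,k;k+1;x)$ at $x=1$ when $\alpha\leq-1$. Concretely, I would first compute the Fourier coefficients of $u_r$ directly from \eqref{generalizedpowerseries}: for $m\geq 0$ one gets $\widehat{u_r}(m)=c_m r^m$, while for $m=-k$ with $k\geq 1$ one gets
$$
\widehat{u_r}(-k)=c_{-k}\,F(-\alpha,k;k+1;r^2)\,r^k.
$$

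Next, I would use the fact that convergence $u_r\to f$ in $\De'(\T)$ as $r\to 1$ forces convergence of Fourier coefficients. Indeed, the exponential monomials $\phi_{-m}\in C^\infty(\T)$, so
$$
\widehat{u_r}(m)=\langle u_r,\phi_{-m}\rangle\longrightarrow\langle f,\phi_{-m}\rangle=\hat{f}(m)
$$
for every $m\in\Z$. In particular, for each $k\geq 1$ the sequence $c_{-k}\,F(-\alpha,k;k+1;r^2)\,r^k$ must have a finite limit as $r\to 1$.

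The crucial input is now the statement recorded in the text following \eqref{Gausssummationformula}: for $\alpha\leq-1$ the quantity $F(-\alpha,k;k+1;x)$ diverges to $+\infty$ as $x\to 1^-$. Since $r^k\to 1$, the product $c_{-k}\,F(-\alpha,k;k+1;r^2)\,r^k$ can remain bounded only if $c_{-k}=0$. Applying this for every $k\geq 1$, the entire second sum in \eqref{generalizedpowerseries} vanishes, leaving
$$
u(z)=\sum_{k=0}^\infty c_k z^k,\quad z\in\D,
$$
which is analytic in $\D$.

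There is no real obstacle here beyond confirming the two standard reductions used: that distributional convergence on $\T$ implies Fourier-coefficient convergence (immediate from $\phi_{-m}\in C^\infty(\T)$), and that the divergence of $F(-\alpha,k;k+1;x)$ at $x=1$ for $\alpha\leq-1$ is indeed uniform enough to force $c_{-k}=0$. The former is a one-line observation, while the latter is precisely the dichotomy already recorded in the paper; no further case analysis in $\alpha$ or $k$ is needed.
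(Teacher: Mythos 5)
Your proposal is correct and follows essentially the same route as the paper: test the distributional convergence against the exponential monomials to get convergence of $\widehat{u_r}(-k)=c_{-k}F(-\alpha,k;k+1;r^2)r^k$, then use the divergence of $F(-\alpha,k;k+1;x)$ as $x\to1$ for $\alpha\leq-1$ to force $c_{-k}=0$ for each $k\geq1$. (Your closing worry about "uniformity" is moot, since the argument is applied separately for each fixed $k$ with $c_{-k}$ a fixed constant.)
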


\begin{proof}
Consider the series expansion \eqref{generalizedpowerseries}. 
We shall prove that $c_k=0$ for $k<0$. Let $k\in\Z^+$. 
From \eqref{generalizedpowerseries} we have that 
\begin{equation}\label{ccoefficientformula}
c_{-k} F(-\alpha,k; k+1 ;r^2) r^{k}
=\frac{1}{2\pi}\int_\T u(re^{i\theta})e^{ik\theta}\, d\theta
\end{equation}
for $0<r<1$. 
By assumption the right hand side in \eqref{ccoefficientformula} has 
a limit as $r\to 1$. 
Recall from the discussion following 
formulas \eqref{Ffactorintegralformula}-\eqref{Gausssummationformula}  
that the quantity $F(-\alpha,k; k+1 ;x)$ increases to $+\infty$ as $x\to1$.
Passing to the limit in \eqref{ccoefficientformula} as $r\to1$ 
we conclude that $c_{-k}=0$. This yields the conclusion of the theorem.
\end{proof}

We refer to Olofsson \cite[Theorem 2.3]{O14} for a result analogous to 
Theorem \ref{boundarybehavioralphaleq-1}
phrased in another setting of generalized harmonic functions in $\D$.  
Under an assumption of pointwise boundary limit, 
a similar result also exists for the class of 
generalized axially symmetric potentials mentioned in the introduction, 
see Huber \cite[Theorem 2]{Huber}.

A traditional approach to the Poisson integral representation
\begin{equation}\label{PIrepresentation}
u(z)=P_\alpha[f](z), \quad z\in\D, 
\end{equation}
is to exhibit the distribution $f\in\De'(\T)$ 
as a limit point of the $u_r$'s as $r\to1$ 
and then deduce the Poisson integral representation \eqref{PIrepresentation} 
from a uniqueness argument. 
See for instance 
Olofsson and Wittsten \cite[Theorem 5.5]{olofsson2013poisson} 
for an elaboration on this theme. An interesting point of 
Theorem \ref{characterizationPIrepresentation} is that this result 
establishes  Poisson integral representations \eqref{PIrepresentation} 
in cases where boundary limits are non-existent.

Structure in \eqref{generalizedpowerseries} suggests use of 
the differential operator $A=z\partial-\bar z\bar\partial$. 
Observe that 
\begin{equation}\label{Aaction}
Au(z)= \sum_{k=1}^\infty  kc_{k} z^k 
-\sum_{k=1}^\infty  k c_{-k}F(-\alpha,k;k+1;\lvert z\rvert^2)\bar z^k
,\quad z\in\D,
\end{equation}
if $u$ has the form \eqref{generalizedpowerseries}. 
In particular, the function $Au$ is $\alpha$-harmonic in $\D$ if $u$ is.

\begin{prop}\label{A'intertwining}
Let $\alpha\in\R$ and $f\in\De'(\T)$. Then
$$
iA P_\alpha[f](z)=P_\alpha[f'](z)
$$
for $z\in\D$, where $f'$ is the distributional derivative of $f$.
\end{prop}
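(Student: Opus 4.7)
The plan is to verify the identity term by term via the series expansion of the Poisson integral provided by Proposition \ref{PIseriesexpansion}, combined with the explicit action of the operator $A = z\partial - \bar z\bar\partial$ on such expansions given by \eqref{Aaction}.

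First, I would write out $P_\alpha[f]$ using Proposition \ref{PIseriesexpansion}. This expansion is of the form \eqref{generalizedpowerseries} with coefficients $c_k = \hat f(k)$ for $k \geq 0$ and $c_{-k} = \hat f(-k)(\alpha+1)_k/k!$ for $k \geq 1$. Applying \eqref{Aaction} then yields
\begin{equation*}
iAP_\alpha[f](z) = \sum_{k=1}^\infty ik\,\hat f(k)\,z^k - \sum_{k=1}^\infty ik\,\hat f(-k)\,\frac{(\alpha+1)_k}{k!}\,F(-\alpha,k;k+1;\lvert z\rvert^2)\,\bar z^k.
\end{equation*}

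Next, I would expand $P_\alpha[f']$ by applying Proposition \ref{PIseriesexpansion} to $f'$. Using the definition $(f')\hat{\ }(k) = ik\,\hat f(k)$ of the distributional derivative, one obtains
\begin{equation*}
P_\alpha[f'](z) = \sum_{k=0}^\infty ik\,\hat f(k)\,z^k + \sum_{k=1}^\infty (-ik)\,\hat f(-k)\,\frac{(\alpha+1)_k}{k!}\,F(-\alpha,k;k+1;\lvert z\rvert^2)\,\bar z^k,
\end{equation*}
and the $k=0$ term in the holomorphic sum vanishes because of the factor $ik$. Comparing the two expressions term by term gives $iAP_\alpha[f] = P_\alpha[f']$ in $\D$.

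There is no real obstacle here, since both sides admit the same series representation; the only minor subtlety is justifying termwise application of $A$ to the series for $P_\alpha[f]$. This is legitimate because the series expansion \eqref{generalizedpowerseries} of any $\alpha$-harmonic function converges in $C^\infty(\D)$ under the growth condition \eqref{coefficientgrowth}, and the coefficients $\hat f(k)$ of a distribution on $\T$ have at most polynomial growth, so \eqref{coefficientgrowth} is satisfied. Hence termwise differentiation is justified and the identity follows.
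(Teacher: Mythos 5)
Your proof is correct and follows essentially the same route as the paper: both compute $P_\alpha[f']$ via Proposition \ref{PIseriesexpansion} together with $(f')\hat{\ }(k)=ik\hat f(k)$ and match it against $iAP_\alpha[f]$ using \eqref{Aaction}. Your additional remark justifying termwise application of $A$ (via convergence of \eqref{generalizedpowerseries} in $C^\infty(\D)$ under polynomial coefficient growth) is a welcome touch of rigor that the paper leaves implicit.
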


\begin{proof} 
Recall that $(f')\hat{\ }(k)=ik\hat f(k)$ for $k\in\Z$. 
By Proposition \ref{PIseriesexpansion} we have that 
\begin{align*}
P_\alpha[f'](z)&=\sum_{k=0}^\infty ik\hat{f}(k) z^k +  
\sum_{k=1}^\infty(-ik) \hat{f}(-k) \frac{(\alpha+1)_k}{k!}
F(-\alpha,k;k+1;\lvert z\rvert^2)\bar z^k\\
&=iAP_\alpha[f](z)
\end{align*}
for $z\in\D$, where in the last equality we have used \eqref{Aaction}.
\end{proof}

We refer to the differential operator 
$iA=i(z\partial-\bar z\bar\partial)$ as the angular derivative.
Our interest in this operator arose in connection to 
the paper Olofsson \cite{olofsson2018lipschitz}. 

Let $\delta_1\in\De'(\T)$ be the unit Dirac mass located at 
the point $1\in\T$, that is, 
$\langle\delta_1,\varphi\rangle=\varphi(1)$ for $\varphi\in C^\infty(\T)$.

\begin{cor}\label{angularderivativePa}
Let $\alpha\in\R$ and $k\in\N$. Then
$$
P_\alpha[\delta_1^{(k)}](z)=(iA)^k P_\alpha(z)
$$
for $z\in\D$, 
where $\delta_1^{(k)}$ denotes the $k$-th distributional derivative 
of $\delta_1$.
\end{cor}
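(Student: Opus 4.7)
The plan is to argue by induction on $k$, using Proposition \ref{A'intertwining} as the engine that converts distributional derivatives of the boundary datum into applications of the angular derivative $iA$ on the Poisson integral.

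First I would verify the base case $k=0$, namely that $P_\alpha[\delta_1] = P_\alpha$ in $\D$. This is immediate from the definition of the Poisson integral as a convolution: since $\delta_1$ acts as the identity for convolution on $\De'(\T)$, we have $P_{\alpha,r}\ast\delta_1 = P_{\alpha,r}$, so $P_\alpha[\delta_1](re^{i\theta}) = P_{\alpha,r}(e^{i\theta}) = P_\alpha(re^{i\theta})$. Alternatively one could note that $\widehat{\delta_1}(k)=1$ for every $k\in\Z$ and compare the series in Proposition \ref{PIseriesexpansion} with the expansion \eqref{Paseriesexpansion}.

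For the inductive step, assume $P_\alpha[\delta_1^{(k)}] = (iA)^k P_\alpha$ in $\D$. Applying Proposition \ref{A'intertwining} with $f = \delta_1^{(k)}$ gives
\begin{equation*}
P_\alpha[\delta_1^{(k+1)}](z) = P_\alpha[(\delta_1^{(k)})'](z) = iA\, P_\alpha[\delta_1^{(k)}](z) = iA\bigl((iA)^k P_\alpha\bigr)(z) = (iA)^{k+1} P_\alpha(z)
\end{equation*}
for every $z\in\D$, which completes the induction.

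No real obstacle is expected here, since both ingredients (the intertwining identity of Proposition \ref{A'intertwining} and the identity $P_\alpha[\delta_1]=P_\alpha$) are already in place; the only thing to be careful about is to phrase the iteration cleanly so that the $k$-th distributional derivative $\delta_1^{(k)}$ is interpreted consistently with the convention $(f')\hat{\,}(k)=ik\hat f(k)$ used throughout the section.
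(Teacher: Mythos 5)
Your proposal is correct and follows essentially the same route as the paper: establish $P_\alpha[\delta_1]=P_\alpha$ (the paper does this via $\widehat{\delta_1}(k)=1$ together with Proposition \ref{PIseriesexpansion} and formula \eqref{Paseriesexpansion}) and then iterate Proposition \ref{A'intertwining}. Your version merely makes the induction explicit, which the paper leaves implicit in the phrase ``the result now follows by Proposition \ref{A'intertwining}.''
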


\begin{proof}
It is straight\-forward to check that $(\delta_1)\hat{\ }(k)=1$ for $k\in\Z$.  
By Proposition \ref{PIseriesexpansion} and formula \eqref{Paseriesexpansion} 
we have that 
$P_\alpha[\delta_1](z)=P_\alpha$.   
The result now follows by Proposition \ref{A'intertwining}. 
\end{proof}

\section{Angular derivatives of Poisson kernels}
\label{sectionangularderivatives}

This section is devoted to a careful analysis of 
angular derivatives of Poisson kernels.    
We shall make good use of the M\"obius transformation 
\begin{equation}\label{mobiusmap}
\varphi(z)=i\frac{1+z}{1-z}
\end{equation} 
in our calculations. 
Notice that $\varphi$ maps $\D$ one-to-one onto $\Hu$, $\varphi(0)=i$ 
and $\varphi(1)=\infty$. 
From standard theory we have that the M\"obius transformation $\varphi$ 
is uniquely determined by these three properties. 
For the sake of easy reference we record also the 
formulas  
\begin{equation}\label{mobiusmapderivative}
\im \varphi(z)=\frac{1-\lvert z\rvert^2}{\lvert 1-z\rvert^2} 
\quad\text{and}\quad \varphi'(z)=\frac{2i}{(1-z)^2},
\end{equation} 
which are straight\-forward to check.

A most natural $\alpha$-harmonic function in $\Hu$ is the $(\alpha+1)$-th power 
of the imaginary part:
$$
u(z)=(\im z)^{\alpha+1},\quad z\in\Hu.
$$
We first calculate the weighted pull-back $u_{\varphi,\alpha}$ 
from \eqref{weightedpullback} of this function by $\varphi$.

\begin{theorem}\label{Paaswpullback}
Let $\alpha\in\R$. Then 
\begin{equation*}
P_\alpha(z)=
c\varphi'(z)^{-\alpha/2} (\im \varphi(z))^{\alpha+1},
\quad z\in\D,
\end{equation*}
where $\varphi$ is as in \eqref{mobiusmap} and $c\varphi'(0)^{-\alpha/2}=1$.  
\end{theorem}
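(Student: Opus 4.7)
The proof is essentially a direct computation, since all three ingredients $P_\alpha$, $\im\varphi$ and $\varphi'$ are given explicitly by \eqref{aPoissonkernel} and \eqref{mobiusmapderivative}. The only non-algebraic issue is the interpretation of the fractional power $\varphi'(z)^{-\alpha/2}$, which (as noted after \eqref{weightedpullback}) is defined via the continuous branch of $\log\varphi'$ on the simply connected domain $\D$ where $\varphi'$ is zero-free.

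My plan is to start from \eqref{mobiusmapderivative}, which gives
\begin{equation*}
(\im\varphi(z))^{\alpha+1}=\frac{(1-\lvert z\rvert^2)^{\alpha+1}}{(1-z)^{\alpha+1}(1-\bar z)^{\alpha+1}}
\end{equation*}
(noting $\lvert 1-z\rvert^2=(1-z)(1-\bar z)$), and $\varphi'(z)=2i/(1-z)^2$, so that
\begin{equation*}
\varphi'(z)^{-\alpha/2}=(2i)^{-\alpha/2}(1-z)^{\alpha},
\end{equation*}
where the branches of both powers are chosen via the logarithm defined on $\D$ and anchored by the value at $z=0$. Multiplying these two expressions and collecting powers of $(1-z)$, the factor $(1-z)^{\alpha+1}$ in the denominator of $(\im\varphi)^{\alpha+1}$ is cancelled against $(1-z)^{\alpha}$ from $\varphi'(z)^{-\alpha/2}$, leaving exactly one power of $(1-z)$ in the denominator. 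Comparing with \eqref{aPoissonkernel}, one obtains
\begin{equation*}
\varphi'(z)^{-\alpha/2}(\im\varphi(z))^{\alpha+1}=(2i)^{-\alpha/2}\,P_\alpha(z).
\end{equation*}

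Finally, to identify the constant $c$ and confirm the normalization $c\varphi'(0)^{-\alpha/2}=1$, I would evaluate at $z=0$: since $\varphi'(0)=2i$ by the second formula in \eqref{mobiusmapderivative}, we have $\varphi'(0)^{-\alpha/2}=(2i)^{-\alpha/2}$, so the choice $c=(2i)^{\alpha/2}$ satisfies the normalization and yields the stated identity.

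The only step requiring attention is the consistency of the chosen branches when cancelling $(1-z)^{\alpha+1}$ against $(1-z)^\alpha$; but since $1-z$ has a single-valued logarithm on $\D$, all the fractional powers are governed by the same branch of $\log(1-z)$, and the identity holds without ambiguity on all of $\D$.
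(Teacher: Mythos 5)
Your proposal is correct and follows essentially the same route as the paper: a direct computation from \eqref{aPoissonkernel} and \eqref{mobiusmapderivative}, with the constant fixed by evaluation at $z=0$ (the paper phrases this as $P_\alpha(0)=1$, which is equivalent to your $c=(2i)^{\alpha/2}$). You simply write out in full the cancellation of powers of $1-z$ and the branch bookkeeping that the paper declares ``evident.''
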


\begin{proof}
Recall formula \eqref{aPoissonkernel}. 
The two formulas in \eqref{mobiusmapderivative}   make evident that 
$$
P_\alpha(z)=c\varphi'(z)^{-\alpha/2} (\im \varphi(z))^{\alpha+1},
\quad z\in\D,
$$
where $c\in\C$. 
From $P_\alpha(0)=1$ we see that $c\varphi'(0)^{-\alpha/2}=1$.  
\end{proof}

Notice that Theorem \ref{Paaswpullback} with $\alpha=0$ yields 
the well-known 
formula 
$$
P_0(z)=\re\Big( \frac{1+z}{1-z} \Big),\quad z\in\D,
$$
for the usual 
Poisson kernel for $\D$.

Our next task is to calculate angular derivatives of Poisson kernels.  
We begin with a preparatory lemma.  

\begin{lemma}\label{iAmobiusmap} 
Let $\varphi$ be as in \eqref{mobiusmap}. Then $iA\varphi=\frac{1}{2}(\varphi^2+1)$.
\end{lemma}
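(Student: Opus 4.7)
The plan is to exploit the fact that $\varphi$ is holomorphic, so $\bar\partial\varphi=0$ and hence $iA\varphi = iz\partial\varphi = iz\varphi'(z)$. Thus the identity reduces to the purely algebraic identity
$$
iz\varphi'(z) = \tfrac{1}{2}(\varphi(z)^2+1)
$$
for the explicit M\"obius transformation in \eqref{mobiusmap}.

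First I would substitute the formula $\varphi'(z)=2i/(1-z)^2$ from \eqref{mobiusmapderivative} into the left-hand side to obtain
$$
iA\varphi(z) = iz\cdot\frac{2i}{(1-z)^2} = -\frac{2z}{(1-z)^2}.
$$
Then I would compute the right-hand side by expanding $\varphi^2(z) = -(1+z)^2/(1-z)^2$ and combining over a common denominator:
$$
\varphi(z)^2+1 = \frac{-(1+z)^2+(1-z)^2}{(1-z)^2} = \frac{-4z}{(1-z)^2}.
$$
Dividing by $2$ matches the left-hand side and finishes the proof.

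There is no real obstacle here: the only subtlety is noticing that the $\bar z\bar\partial$ part of $A$ drops out because $\varphi$ is holomorphic, after which the claim is a one-line algebraic verification. One could alternatively remark that since $\varphi$ maps the unit disc conformally onto $\Hu$ and $iA$ corresponds under this change of variables to a vector field tangent to hyperbolic rotations, the identity expresses the action of this vector field on the coordinate $\varphi$; but the direct computation above is the quickest route.
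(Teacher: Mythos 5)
Your proposal is correct and follows essentially the same route as the paper: both reduce $iA\varphi$ to $iz\varphi'(z)=-2z/(1-z)^2$ using the holomorphy of $\varphi$ and the formula for $\varphi'$, and then verify by elementary algebra that this equals $\tfrac{1}{2}(\varphi^2+1)$. The computation checks out, so nothing further is needed.
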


\begin{proof}
From formula \eqref{mobiusmapderivative} we have that
$$
iA\varphi(z)=iz\varphi'(z)=-2z/(1-z)^2.
$$ 
Using some elementary algebra we now calculate that  
$$
iA\varphi(z)=\frac{1}{2}\frac{(1-z)^2-(1+z)^2}{(1-z)^2}
=\frac{1}{2}\Big(1- \Big(\frac{1+z}{1-z}\Big)^2 \Big)
=\frac{1}{2}(1+\varphi(z)^2),
$$ 
where the last equality follows by  \eqref{mobiusmap}.
\end{proof}

Let us record some properties of the angular derivative. 
The differential operator $iA$ satisfies the product rule 
for differen\-tiation: $iA(fg)=giA(f)+fiA(g)$ for suitable $f$ and $g$. 
Denote by $\bar f$ the point\-wise complex conjugate of 
a complex-valued function $f$. 
We shall use the chain rule in the form 
\begin{equation}\label{chainrule}
iA(h\circ f)=((\partial h)\circ f)iAf
+((\bar\partial h)\circ f)\overline{iAf}
\end{equation}
for suitable $f$ and $h$. 
Formula \eqref{chainrule} is straight\-forward to check.
As a consequence of \eqref{chainrule} we have that 
the differential operator $iA$ commutes 
with the action of complex conjugation of functions: 
$iA(\bar f)=\overline{iAf}$ for suitable $f$.

We now turn to differentiation of powers.

\begin{lemma}\label{iAimmobiusmappower} 
Let $\varphi$ be as in \eqref{mobiusmap} and $\alpha\in\R$. 
Then $iA((\im \varphi)^{\alpha+1})
=(\alpha+1)(\re \varphi)
(\im \varphi)^{\alpha+1}$ in $\D$.
\end{lemma}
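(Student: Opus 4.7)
The plan is a direct computation using the chain rule \eqref{chainrule} together with Lemma \ref{iAmobiusmap}. Write $(\im w)^{\alpha+1}=h(w,\bar w)$ with $h(w,\bar w)=\bigl((w-\bar w)/(2i)\bigr)^{\alpha+1}$, so that
\[
\partial h = \tfrac{\alpha+1}{2i}(\im w)^{\alpha},\qquad \bar\partial h = -\tfrac{\alpha+1}{2i}(\im w)^{\alpha}.
\]
These are well-defined on $\varphi(\D)=\Hu$ since $\im\varphi>0$, so there is no branch issue.

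Applying \eqref{chainrule} with $f=\varphi$ yields
\[
iA\bigl((\im\varphi)^{\alpha+1}\bigr) = \tfrac{\alpha+1}{2i}(\im\varphi)^{\alpha}\bigl(iA\varphi - \overline{iA\varphi}\bigr) = (\alpha+1)(\im\varphi)^{\alpha}\,\im(iA\varphi).
\]
Now Lemma \ref{iAmobiusmap} gives $iA\varphi = \tfrac12(\varphi^{2}+1)$, whence
\[
\im(iA\varphi) = \tfrac12\im(\varphi^{2}+1) = \tfrac12\cdot 2(\re\varphi)(\im\varphi) = (\re\varphi)(\im\varphi).
\]
Substituting into the previous display gives the claimed identity
\[
iA\bigl((\im\varphi)^{\alpha+1}\bigr) = (\alpha+1)(\re\varphi)(\im\varphi)^{\alpha+1}.
\]

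There is no real obstacle here; the only place to be careful is making sure the chain rule \eqref{chainrule} is applied with $h$ regarded as a function of the two independent variables $w$ and $\bar w$, and that the power $(\im w)^{\alpha+1}$ is interpreted unambiguously (which it is, since $\im\varphi>0$ throughout $\D$). After that, Lemma \ref{iAmobiusmap} does all the work.
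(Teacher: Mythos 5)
Your proof is correct and follows essentially the same route as the paper: apply the chain rule \eqref{chainrule} to the power of $\im\varphi$, then invoke Lemma \ref{iAmobiusmap} and simplify $\frac{1}{2i}\bigl(iA\varphi-\overline{iA\varphi}\bigr)=\im(iA\varphi)=(\re\varphi)(\im\varphi)$, which is exactly the paper's computation written slightly more compactly. No issues.
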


\begin{proof}
Let $u=(\im \varphi)^{\alpha+1}$ in $\D$.  
By a standard rule for differentiation we have that
$$
iAu=(\alpha+1)(\im \varphi)^{\alpha} iA(\frac{1}{2i}(\varphi-\bar\varphi))
$$
in $\D$, compare with \eqref{chainrule}.
Since the differential operator $iA$ commutes with the action of 
complex conjugation of functions we have that 
$$
iAu=(\alpha+1)(\im \varphi)^{\alpha} 
\frac{1}{2i}(iA\varphi-\overline{iA\varphi})
$$
in $\D$.   
We now use Lemma \ref{iAmobiusmap} and calculate that  
$$
iAu=(\alpha+1)(\im \varphi)^{\alpha} \frac{1}{2i}
(\frac{1}{2}(\varphi^2+1)-\frac{1}{2}(\bar\varphi^2+1))=  
(\alpha+1)(\im \varphi)^{\alpha} 
\frac{1}{2i}\frac{1}{2}(\varphi^2-\bar\varphi^2)
$$
in $\D$, 
where the last equality follows by cancellation. 
By some elementary algebra we now have that  
$$
iAu=(\alpha+1)(\im \varphi)^{\alpha} \frac{1}{2i}\frac{1}{2}
(\varphi-\bar\varphi)(\varphi+\bar\varphi)=
(\alpha+1)(\im \varphi)^{\alpha}(\im \varphi)(\re \varphi)
$$ 
in $\D$. This yields the conclusion of the lemma.
\end{proof}

\begin{lemma}\label{iAmobiusmappower} 
Let $\varphi$ be as in \eqref{mobiusmap} and $\alpha\in\R$. 
Then $iA((\varphi')^{-\alpha/2})
=\frac{\alpha}{2} (i-\varphi)(\varphi')^{-\alpha/2}$ 
in $\D$. 
\end{lemma}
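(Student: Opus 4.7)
The plan is to exploit the fact that $\varphi$ is holomorphic, which immediately simplifies the action of $iA$. Since $\varphi$ is holomorphic on $\mathbb{D}$, so is $\varphi'$ (with $\varphi'$ nowhere vanishing, by \eqref{mobiusmapderivative}), and therefore a branch of $(\varphi')^{-\alpha/2}$ is holomorphic in $\mathbb{D}$. In particular $\bar\partial(\varphi')^{-\alpha/2}=0$, so that
\[
iA\bigl((\varphi')^{-\alpha/2}\bigr)=iz\,\partial\bigl((\varphi')^{-\alpha/2}\bigr).
\]
This eliminates one of the two terms in $iA$ without any serious work and reduces the statement to a routine holomorphic derivative.

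Next I would apply the chain rule for the power, analogous to \eqref{chainrule} but for the holomorphic variable only, to get
\[
\partial\bigl((\varphi')^{-\alpha/2}\bigr)=-\frac{\alpha}{2}(\varphi')^{-\alpha/2-1}\varphi''=-\frac{\alpha}{2}(\varphi')^{-\alpha/2}\frac{\varphi''}{\varphi'}.
\]
Hence $iA\bigl((\varphi')^{-\alpha/2}\bigr)=-\tfrac{\alpha}{2}(\varphi')^{-\alpha/2}\cdot iz\,\varphi''/\varphi'$, and the whole proof is reduced to checking the single identity $iz\,\varphi''/\varphi'=\varphi-i$ in $\mathbb{D}$.

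This last identity is verified by a direct computation from \eqref{mobiusmap} and \eqref{mobiusmapderivative}. From $\varphi'(z)=2i/(1-z)^2$ one gets $\varphi''(z)=4i/(1-z)^3$, so $\varphi''/\varphi'=2/(1-z)$ and therefore $iz\,\varphi''/\varphi'=2iz/(1-z)$. On the other hand, from $\varphi(z)=i(1+z)/(1-z)$ one finds
\[
\varphi(z)-i=\frac{i(1+z)-i(1-z)}{1-z}=\frac{2iz}{1-z},
\]
which matches. Substituting back yields
\[
iA\bigl((\varphi')^{-\alpha/2}\bigr)=-\frac{\alpha}{2}(\varphi-i)(\varphi')^{-\alpha/2}=\frac{\alpha}{2}(i-\varphi)(\varphi')^{-\alpha/2},
\]
as claimed. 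There is no real obstacle here; the only care needed is in fixing the branch so that $(\varphi')^{-\alpha/2}$ is well defined and holomorphic throughout $\mathbb{D}$, which is guaranteed because $\mathbb{D}$ is simply connected and $\varphi'$ does not vanish there.
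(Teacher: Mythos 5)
Your proof is correct and follows essentially the same route as the paper: the paper first establishes $iA\varphi'=(\varphi-i)\varphi'$ (which is exactly your identity $iz\,\varphi''/\varphi'=\varphi-i$ multiplied through by $\varphi'$) and then applies the power differentiation rule, whereas you apply the power rule first and verify the identity afterwards. The computations are identical in substance, and your explicit remark about choosing a holomorphic branch of $(\varphi')^{-\alpha/2}$ is a welcome (if minor) extra precision.
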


\begin{proof}
We first show that $iA\varphi'=(\varphi-i)\varphi'$. 
From \eqref{mobiusmapderivative} we have that 
$$
iA\varphi'(z)=iz\varphi''(z)=(2i)^2\frac{z}{(1-z)^3}
=i\frac{2z}{1-z}\varphi'(z),
$$ 
where the last equality again follows by \eqref{mobiusmapderivative}.
By some elementary algebra we now have that  
$$
iA\varphi'(z)=
i\frac{(1+z)-(1-z)}{1-z}\varphi'(z)
=i\Big(\frac{1+z}{1-z}-1\Big)\varphi'(z)=(\varphi(z)-i)\varphi'(z),
$$
where the last equality follows by \eqref{mobiusmap}.

We now consider the general case. 
By a well-known differentiation formula we have that 
$$
iA((\varphi')^{-\alpha/2})=-\frac{\alpha}{2}(\varphi')^{-\alpha/2-1}iA\varphi' 
$$
in $\D$, compare with \eqref{chainrule}. 
We now use the result of the previous paragraph to conclude that 
$$
iA((\varphi')^{-\alpha/2})=-\frac{\alpha}{2}(\varphi')^{-\alpha/2-1}    
(\varphi-i)\varphi'=\frac{\alpha}{2}(i-\varphi)(\varphi')^{-\alpha/2}
$$
in $\D$. 
\end{proof}

We next calculate the angular derivative of the Poisson kernel. 

\begin{theorem}\label{iAderivativePa}
Let $\alpha\in\R$ and let $\varphi$ be as in \eqref{mobiusmap}. Then 
$$
iA P_\alpha(z)=\frac{1}{2}(\varphi(z)+(\alpha+1)\overline{\varphi(z)}
+i\alpha )P_\alpha(z)  
$$
for $z\in\D$. 
\end{theorem}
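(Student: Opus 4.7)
The plan is to combine Theorem \ref{Paaswpullback} with the product rule for the angular derivative $iA$ and then invoke the two preparatory lemmas that compute $iA$ on each of the two factors making up $P_\alpha$.

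More concretely, by Theorem \ref{Paaswpullback} we may write
$$
P_\alpha(z) = c\,(\varphi'(z))^{-\alpha/2}\,(\operatorname{Im}\varphi(z))^{\alpha+1}
$$
in $\D$, for a suitable constant $c$. Since $iA$ satisfies the product rule, I would apply $iA$ to this factorization and obtain
$$
iA P_\alpha = c\bigl(iA((\varphi')^{-\alpha/2})\bigr)(\operatorname{Im}\varphi)^{\alpha+1} + c(\varphi')^{-\alpha/2}\bigl(iA((\operatorname{Im}\varphi)^{\alpha+1})\bigr).
$$
By Lemma \ref{iAmobiusmappower}, the first summand equals $\tfrac{\alpha}{2}(i-\varphi)\,P_\alpha$; by Lemma \ref{iAimmobiusmappower}, the second summand equals $(\alpha+1)(\operatorname{Re}\varphi)\,P_\alpha$. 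Factoring out $P_\alpha$ gives
$$
iA P_\alpha = \Bigl[\tfrac{\alpha}{2}(i-\varphi) + (\alpha+1)\operatorname{Re}\varphi\Bigr]P_\alpha.
$$

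To finish, I would rewrite $\operatorname{Re}\varphi = \tfrac{1}{2}(\varphi + \bar\varphi)$ and collect terms:
$$
\tfrac{\alpha}{2}(i-\varphi) + \tfrac{\alpha+1}{2}(\varphi + \bar\varphi) = \tfrac{1}{2}\bigl(\varphi + (\alpha+1)\bar\varphi + i\alpha\bigr),
$$
which is exactly the bracketed expression in the theorem statement. There is really no obstacle here: the two preparatory lemmas have been set up precisely so that this calculation is a routine combination of them, and the only small care required is in the bookkeeping of the coefficients of $\varphi$ and $\bar\varphi$ when combining $(i-\varphi)$ with $(\varphi+\bar\varphi)/2$. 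The mild subtlety worth noting is that the factor $c(\varphi'(0))^{-\alpha/2} = 1$ from Theorem \ref{Paaswpullback} plays no role in the differentiation itself; it simply ensures that $P_\alpha$, rather than a scalar multiple of it, reappears on the right-hand side after invoking the two lemmas.
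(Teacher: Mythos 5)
Your proposal is correct and follows essentially the same route as the paper: factor $P_\alpha$ via Theorem \ref{Paaswpullback}, apply the product rule for $iA$, invoke Lemmas \ref{iAimmobiusmappower} and \ref{iAmobiusmappower} on the two factors, and collect terms. The final algebraic simplification checks out, so there is nothing to add.
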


\begin{proof}
Let $u=(\varphi')^{-\alpha/2} (\im \varphi)^{\alpha+1}$ in $\D$. 
In view of Theorem \ref{Paaswpullback} it suffices to prove  
the conclusion of the theorem with $P_\alpha$ replaced 
by the function $u$. 
By the product rule for differentiation we have that 
$$
iAu= (\im \varphi)^{\alpha+1} iA((\varphi')^{-\alpha/2})      
+ (\varphi')^{-\alpha/2} iA((\im \varphi)^{\alpha+1})
$$
in $\D$. 
We now use Lemmas \ref{iAimmobiusmappower} and \ref{iAmobiusmappower} 
to conclude that 
\begin{align*}
iAu&=(\im \varphi)^{\alpha+1} \frac{\alpha}{2} (i-\varphi)(\varphi')^{-\alpha/2}
+ (\varphi')^{-\alpha/2} 
(\alpha+1)(\re \varphi)
(\im \varphi)^{\alpha+1}\\
&= \Big(\frac{\alpha}{2}(i-\varphi) +
(\alpha+1)
\frac{1}{2}(\varphi+\bar\varphi)   
\Big) u
\end{align*}
in $\D$, where the last equality is straight\-forward to check. 
This yields the conclusion of the theorem.
\end{proof}

An earlier version of Theorem \ref{iAderivativePa} appears in 
Olofsson \cite[Theorem 1.11]{olofsson2018lipschitz};  
see also Klintborg and Olofsson \cite[Corollary 1.3]{KO}
for a generalization along similar lines. 
Here our focus is on applications to the setting of 
the upper half-plane and we express matters using the M\"obius 
transformation $\varphi$.

We denote by $\C[z,\bar z]$ the algebra of polynomials in $z$ and $\bar z$ 
with complex coefficients.
Define polynomials $\{h_{k,\alpha}\}_{k=0}^\infty$ in $\C[z,\bar z]$ 
by $h_{0,\alpha}=1$ and 
\begin{align}\label{hkpolynomials}
h_{k+1,\alpha}(z)&=\frac{1}{2}(z^2+1)\partial h_{k,\alpha}(z)+
\frac{1}{2}(\bar z^2+1)\bar\partial h_{k,\alpha}(z)\\ 
& \quad +\frac{1}{2}(z+(\alpha+1)\bar z+i\alpha) h_{k,\alpha}(z) \notag
\end{align}
for $k\geq0$. It is straight\-forward to check that $h_{k,\alpha}$ 
has degree at most $k$ for $k=0,1,\dots$.

\begin{theorem}\label{iApowerPa} 
Let $\alpha\in\R$ and let $\varphi$ be as in \eqref{mobiusmap}. Then 
$$
(iA)^k P_\alpha(z)=h_{k,\alpha}(\varphi(z))P_\alpha(z),\quad z\in\D,  
$$
for $k=0,1,2,\dots$, 
where the $h_{k,\alpha}$'s are as in \eqref{hkpolynomials}. 
\end{theorem}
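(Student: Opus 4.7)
The plan is to prove the identity by induction on $k$, with the base case $k=0$ being immediate from $h_{0,\alpha}=1$ (so that $(iA)^0 P_\alpha = P_\alpha$). For the inductive step, I would assume $(iA)^k P_\alpha = h_{k,\alpha}(\varphi) P_\alpha$ and apply $iA$ to both sides, then recognize the right-hand side as $h_{k+1,\alpha}(\varphi) P_\alpha$ via the defining recursion \eqref{hkpolynomials}.

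Concretely, using the product rule $iA(fg) = g\,iAf + f\,iAg$, I would write
\[
(iA)^{k+1} P_\alpha = iA\bigl(h_{k,\alpha}(\varphi)\bigr)\, P_\alpha + h_{k,\alpha}(\varphi)\, iAP_\alpha.
\]
The second term is evaluated via Theorem \ref{iAderivativePa}, producing the factor $\tfrac12\bigl(\varphi + (\alpha+1)\bar\varphi + i\alpha\bigr) h_{k,\alpha}(\varphi)\,P_\alpha$, which matches the last line of the recursion \eqref{hkpolynomials}. For the first term, I would use the chain rule \eqref{chainrule} applied to the composition $h_{k,\alpha}\circ\varphi$ (with $h_{k,\alpha}$ viewed as a polynomial in $z$ and $\bar z$), giving
\[
iA\bigl(h_{k,\alpha}(\varphi)\bigr) = (\partial h_{k,\alpha})(\varphi)\, iA\varphi + (\bar\partial h_{k,\alpha})(\varphi)\, \overline{iA\varphi}.
\]
By Lemma \ref{iAmobiusmap} we have $iA\varphi = \tfrac12(\varphi^2+1)$, and since $iA$ commutes with complex conjugation, $\overline{iA\varphi} = \tfrac12(\bar\varphi^2+1)$. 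Substituting these in recovers exactly the first two terms in the recursion \eqref{hkpolynomials}, so the bracket assembles into $h_{k+1,\alpha}(\varphi)$ as required.

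I do not anticipate a genuine obstacle here: the proof is essentially a bookkeeping exercise pairing the three tools already established (the product rule, the chain rule \eqref{chainrule}, and Lemma \ref{iAmobiusmap}) with the formula from Theorem \ref{iAderivativePa}. The one place requiring a small amount of care is the application of the chain rule to a polynomial in $z$ and $\bar z$ composed with the holomorphic map $\varphi$: one must note that only $\partial h_{k,\alpha}$ and $\bar\partial h_{k,\alpha}$ appear (not mixed derivatives with respect to $\varphi$ and $\bar\varphi$ in other combinations), which is precisely what \eqref{chainrule} delivers. The degree claim that $\deg h_{k,\alpha} \le k$ is a trivial side induction, since the recursion \eqref{hkpolynomials} raises degree by at most one: each of $\tfrac12(z^2+1)\partial$, $\tfrac12(\bar z^2+1)\bar\partial$, and multiplication by $\tfrac12(z+(\alpha+1)\bar z + i\alpha)$ increases the degree by at most $1$.
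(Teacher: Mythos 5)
Your proposal is correct and follows exactly the paper's argument: induction on $k$, with the inductive step combining the product rule, the chain rule \eqref{chainrule} together with Lemma \ref{iAmobiusmap} for $iA(h_{k,\alpha}\circ\varphi)$, and Theorem \ref{iAderivativePa} for $iAP_\alpha$, so that the recursion \eqref{hkpolynomials} assembles the result. No differences worth noting.
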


\begin{proof}
For $k=0$ the result is evident. 
For $k=1$ the result follows by Theorem \ref{iAderivativePa}. 
We proceed by induction and assume that  
$(iA)^k P_\alpha=(h_{k,\alpha}\circ\varphi)P_\alpha$
for some $k\geq0$. Applying the operator $iA$ we have that 
$$
(iA)^{k+1} P_\alpha= P_\alpha iA(h_{k,\alpha}\circ\varphi)
+(h_{k,\alpha}\circ\varphi) iAP_\alpha,  
$$
where we have used the product rule. 
From the chain rule \eqref{chainrule} and Lemma \ref{iAmobiusmap} 
we have that 
$$
iA(h_{k,\alpha}\circ\varphi) =((\partial h_{k,\alpha})\circ\varphi)
\frac{1}{2}(\varphi^2+1)
+((\bar\partial h_{k,\alpha})\circ\varphi)\frac{1}{2}(\bar\varphi^2+1). 
$$
We now return to the function $(iA)^{k+1} P_\alpha$ and use 
Theorem \ref{iAderivativePa} to conclude that 
\begin{align*}
(iA)^{k+1} P_\alpha&= P_\alpha\Big(  ((\partial h_{k,\alpha})\circ\varphi)
\frac{1}{2}(\varphi^2+1)
+((\bar\partial h_{k,\alpha})\circ\varphi)\frac{1}{2}(\bar\varphi^2+1)\Big)\\ 
& \ + (h_{k,\alpha}\circ\varphi) 
\frac{1}{2}(\varphi+(\alpha+1)\bar\varphi+i\alpha )P_\alpha 
= (h_{k+1,\alpha}\circ\varphi) P_\alpha,
\end{align*}
where in the last equality we have used \eqref{hkpolynomials}.
The result now follows by invoking the induction principle.
\end{proof}

We shall study the polynomials $h_{k,\alpha}$ in some more detail.

\begin{lemma}\label{dehka}
Let $h_{k,\alpha}\in\C[z,\bar z]$ be as in \eqref{hkpolynomials} 
for some $\alpha\in\R$ and $k\geq0$. Then the function  
$\Hu\ni z\mapsto (\im z)^{\alpha+1}h_{k,\alpha}(z)$  
is $\alpha$-harmonic in $\Hu$.   
\end{lemma}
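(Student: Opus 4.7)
The plan is to reduce the statement to the already-established $\alpha$-harmonicity of $(iA)^k P_\alpha$ in $\D$ via the weighted pull-back by the M\"obius map $\varphi$ of \eqref{mobiusmap}. The key observation is that the function we wish to show is $\alpha$-harmonic in $\Hu$ is, up to a constant, precisely the inverse weighted pull-back of $(iA)^k P_\alpha$.

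First I would combine Theorems \ref{Paaswpullback} and \ref{iApowerPa} to get the explicit formula
\begin{equation*}
(iA)^k P_\alpha(z) = c\, \varphi'(z)^{-\alpha/2} (\im \varphi(z))^{\alpha+1} h_{k,\alpha}(\varphi(z)), \quad z\in\D,
\end{equation*}
with the constant $c$ as in Theorem \ref{Paaswpullback}. Setting $u(w) = (\im w)^{\alpha+1} h_{k,\alpha}(w)$ for $w\in\Hu$, this identity says precisely that the weighted pull-back satisfies $u_{\varphi,\alpha}(z) = c^{-1} (iA)^k P_\alpha(z)$ in $\D$, in the notation of \eqref{weightedpullback}.

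Next I would observe that $(iA)^k P_\alpha$ is $\alpha$-harmonic in $\D$. Indeed, $P_\alpha$ is $\alpha$-harmonic in $\D$, and as noted immediately after \eqref{Aaction}, the operator $A$ (and hence $iA$) preserves the class of $\alpha$-harmonic functions in $\D$; iteration then gives the claim. Consequently, $u_{\varphi,\alpha}$ is $\alpha$-harmonic in $\D$.

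Finally I would appeal to the conformal invariance property of $\alpha$-harmonic functions recalled in the introduction (from \cite[Theorem 1.1]{olofsson2017on}): since $u_{\varphi,\alpha}$ is $\alpha$-harmonic in $\D$, the function $u$ is $\alpha$-harmonic in $\Hu$, which is exactly the conclusion of the lemma. There is no real obstacle here; the only step requiring a sanity check is the algebraic identification of $u_{\varphi,\alpha}$ with $c^{-1}(iA)^k P_\alpha$, which follows by cancellation of the factors $\varphi'(z)^{-\alpha/2}$ and $(\im \varphi(z))^{\alpha+1}$ supplied respectively by the definition \eqref{weightedpullback} and by the formula for $P_\alpha$ in Theorem \ref{Paaswpullback}.
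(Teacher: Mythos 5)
Your proposal is correct and follows essentially the same route as the paper's own proof: both identify $(h_{k,\alpha}\circ\varphi)P_\alpha=(iA)^kP_\alpha$ as $\alpha$-harmonic in $\D$ via the invariance of $\alpha$-harmonicity under $iA$, recognize it (through Theorem \ref{Paaswpullback}) as the weighted pull-back of $(\im w)^{\alpha+1}h_{k,\alpha}(w)$, and conclude by the conformal invariance result of \cite[Theorem 1.1]{olofsson2017on}. Your write-up is merely a bit more explicit about the constant $c$ and the cancellation in \eqref{weightedpullback}.
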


\begin{proof}
Recall that the function $iAu$ is $\alpha$-harmonic in $\D$ if $u$ is, see \eqref{Aaction}.
From Theorem \ref{iApowerPa} we thus have that the function 
$(h_{k,\alpha}\circ\varphi)P_\alpha$ is $\alpha$-harmonic in $\D$. 
Theorem \ref{Paaswpullback} displays 
the Poisson kernel $P_\alpha$ as a 
weighed pull-back by the function $\varphi$.   
From Olofsson \cite[Theorem 1.1]{olofsson2017on} 
we now conclude that the function 
$z\mapsto (\im z)^{\alpha+1}h_{k,\alpha}(z)$  
is $\alpha$-harmonic in $\Hu$, 
see discussion following formula \eqref{weightedpullback}.   
\end{proof}

Let $\alpha\in\R$ and let us denote by $y$ the imaginary part. 
A calculation shows that 
$$
\Delta_{\alpha;\Hu}y^{\alpha+1}=\frac{1}{2i}D_\alpha
$$
in the sense of differential operators, where 
\begin{equation}\label{Dalphaoperator}
D_{\alpha,z}=(z-\bar z)\partial_z\bar\partial_z
+\bar\partial_z-(\alpha+1)\partial_z.
\end{equation} 
As a consequence, we have that a product $y^{\alpha+1}h$ is 
$\alpha$-harmonic in $\Hu$ if and only  if $D_\alpha h=0$ in $\Hu$, 
where $D_\alpha$ is as in \eqref{Dalphaoperator}.
   
Notice that if $p\in\C[z,\bar z]$ is homogeneous of degree $m$, 
then the polynomial $D_\alpha p$ is homo\-geneous of degree $m-1$.

Consider the partial sums
\begin{equation}\label{eq:skallealfa}
s_{k,\alpha}(z)=\sum_{j=0}^k \frac{(\alpha+1)_j}{j!} z^j
\end{equation}
for $k=0,1,\dots$ of a binomial series \eqref{binomialseries} 
with $a=\alpha+1$.
We shall need the associated homo\-geneous polynomials 
\begin{equation}\label{eq:pkallealfa}
p_{k,\alpha}(z)=z^k s_{k,\alpha}(\bar z /z)
=\sum_{j=0}^k\frac{(\alpha+1)_j}{j!}z^{k-j}\bar z^j
\end{equation}
for $k=0,1,\dots$, where $\alpha\in\R$.
Notice that $p_{k,\alpha}\in\C[z,\bar z]$ is homo\-geneous of degree $k$.

We now return to the differential operator $D_\alpha$ in \eqref{Dalphaoperator}.

\begin{theorem}\label{significancepkapolynomial}
Let $\alpha\in\R$. 
Let $p$ in $\C[z,\bar z]$ be  homo\-geneous of degree $k$. 
Then $D_\alpha p=0$ if and only if $p$ is a constant multiple of $p_{k,\alpha}$, 
where $p_{k,\alpha}$ is as in \eqref{eq:pkallealfa}. 
\end{theorem}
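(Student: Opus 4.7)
The plan is to parametrize a general homogeneous polynomial of degree $k$ and translate the equation $D_\alpha p=0$ into a scalar recurrence on its coefficients, which turns out to have a one-dimensional solution space spanned precisely by $p_{k,\alpha}$.

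First, I would write $p(z)=\sum_{j=0}^k a_j z^{k-j}\bar z^j$ with $a_0,\dots,a_k\in\C$, since every homogeneous element of $\C[z,\bar z]$ of degree $k$ has this form. Next I would carry out the direct computation of $D_\alpha$ on each monomial: using \eqref{Dalphaoperator} and the product structure of $(z-\bar z)\partial\bar\partial$, one finds
\begin{equation*}
D_\alpha(z^{k-j}\bar z^j)=j(k-j+1)\,z^{k-j}\bar z^{j-1}-(k-j)(j+\alpha+1)\,z^{k-j-1}\bar z^j.
\end{equation*}
This is the central computation; nothing else in the proof is delicate. Observe that $D_\alpha$ sends a homogeneous polynomial of degree $k$ to one of degree $k-1$, so the coefficients can be read off monomial-by-monomial.

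Then I would sum over $j$, re-index the first sum by $m=j-1$ and combine with the second sum indexed by $m=j$, obtaining
\begin{equation*}
D_\alpha p=\sum_{m=0}^{k-1}(k-m)\bigl[(m+1)a_{m+1}-(m+\alpha+1)a_m\bigr]z^{k-m-1}\bar z^m.
\end{equation*}
Since the monomials $z^{k-m-1}\bar z^m$ are linearly independent and $k-m\neq 0$ for $0\le m\le k-1$, the equation $D_\alpha p=0$ is equivalent to the two-term recurrence
\begin{equation*}
a_{m+1}=\frac{m+\alpha+1}{m+1}\,a_m,\qquad m=0,1,\dots,k-1.
\end{equation*}
Iterating from $a_0$ yields $a_j=a_0\,(\alpha+1)_j/j!$, which is exactly the coefficient pattern in \eqref{eq:pkallealfa}. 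Hence $p=a_0 p_{k,\alpha}$, giving the forward direction. For the converse, the same calculation shows that this specific choice of coefficients makes every bracket above vanish, so $D_\alpha p_{k,\alpha}=0$ and the same holds for any scalar multiple.

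The only step that requires real attention is the monomial computation of $D_\alpha(z^{k-j}\bar z^j)$; the recurrence and its solution are then immediate. No growth or boundary considerations enter, since the statement is purely algebraic in the homogeneous component of degree $k$.
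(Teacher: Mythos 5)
Your proposal is correct and follows essentially the same route as the paper: both expand $p(z)=\sum_{j=0}^k a_j z^{k-j}\bar z^j$, compute $D_\alpha p$ term by term, and reduce $D_\alpha p=0$ to the recurrence $(j+1)a_{j+1}=(j+\alpha+1)a_j$, whose solution is $a_j=a_0(\alpha+1)_j/j!$. The only difference is organizational (you apply $D_\alpha$ to each monomial and re-index, while the paper computes $\partial p$, $\bar\partial p$, and $(z-\bar z)\partial\bar\partial p$ on the full sum), and your monomial formula checks out.
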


\begin{proof}
We shall evaluate the differential operator $D_\alpha$ 
on a polynomial $p$ of the form 
\begin{equation}\label{homogeneouspolynomial}
p(z)=\sum_{j=0}^k a_j z^{k-j}\bar z^j
\end{equation}
for some $a_0,\dots,a_k\in\C$. Calculations show that 
$$
\bar\partial p(z)=\sum_{j=0}^{k-1} (j+1)a_{j+1} z^{k-1-j}\bar z^j,\quad   
\partial p(z)=\sum_{j=0}^{k-1} (k-j)a_{j} z^{k-1-j}\bar z^j,
$$
and 
$$
(z-\bar z)\partial\bar\partial p(z)=   
\sum_{j=0}^{k-2} (j+1)(k-1-j)a_{j+1} z^{k-1-j}\bar z^j-
\sum_{j=1}^{k-1} j(k-j)a_{j} z^{k-1-j}\bar z^j.
$$
From these formulas we have that 
$$
D_\alpha p(z)=    
\sum_{j=0}^{k-1} (k-j)((j+1)a_{j+1}-(j+\alpha+1)a_{j}) z^{k-1-j}\bar z^j
$$
for $p$ of the form \eqref{homogeneouspolynomial}. 

From the result of the previous paragraph we have that $D_\alpha p=0$ 
if and only if 
$$
(j+1)a_{j+1}-(j+\alpha+1)a_{j}=0
$$  
for $j=0,\dots, k-1$. We conclude that $D_\alpha p=0$ 
if and only if $p=a_0 p_{k,\alpha}$. 
\end{proof}

\begin{cor}\label{reprtermharmonic}
Let $\alpha\in\R$ and $k\in\N$. 
Let $p_{k,\alpha}$ be as in \eqref{eq:pkallealfa}. Then the function  
$\Hu\ni z\mapsto (\im z)^{\alpha+1}p_{k,\alpha}(z)$  
is $\alpha$-harmonic in $\Hu$.   
\end{cor}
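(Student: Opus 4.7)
The plan is to reduce the corollary directly to Theorem \ref{significancepkapolynomial} combined with the operator identity
\[
\Delta_{\Hu;\alpha}\, y^{\alpha+1} = \frac{1}{2i}\, D_\alpha
\]
recorded by the authors just after equation \eqref{Dalphaoperator}, where $y=\operatorname{Im} z$. This identity is to be read as an equality of compositions of differential operators, so that for any sufficiently smooth $h$ in $\Hu$ one has
\[
\Delta_{\Hu;\alpha}\bigl(y^{\alpha+1} h\bigr) = \frac{1}{2i}\, D_\alpha h.
\]
In particular, $y^{\alpha+1} h$ is $\alpha$-harmonic in $\Hu$ if and only if $D_\alpha h = 0$ there.

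Next I would apply this equivalence with $h = p_{k,\alpha}$. From the defining formula \eqref{eq:pkallealfa}, the polynomial $p_{k,\alpha}$ is homogeneous of degree $k$ in $\C[z,\bar z]$, so Theorem \ref{significancepkapolynomial} applies and gives $D_\alpha p_{k,\alpha} = 0$; strictly speaking the theorem delivers this conclusion for any constant multiple of $p_{k,\alpha}$, and the choice of the multiple $1$ suffices. Substituting into the displayed operator identity yields $\Delta_{\Hu;\alpha}\bigl(y^{\alpha+1} p_{k,\alpha}\bigr) = 0$ on $\Hu$, which is the claim.

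There is essentially no obstacle: the corollary is a direct specialization of Theorem \ref{significancepkapolynomial}, and the only thing that needs to be spelled out is the connection between the reduced operator $D_\alpha$ on $\C[z,\bar z]$ and the $\alpha$-Laplacian on products $y^{\alpha+1} h$. I would also remark that this provides a purely algebraic route to $\alpha$-harmonicity of these building blocks, complementing the analytic derivation via Poisson kernels used in Lemma \ref{dehka}; indeed one could alternatively deduce the corollary from Lemma \ref{dehka} by identifying the top-degree homogeneous component of $h_{k,\alpha}$ with a scalar multiple of $p_{k,\alpha}$, but going through $D_\alpha$ is cleaner and avoids any discussion of the recursion \eqref{hkpolynomials}.
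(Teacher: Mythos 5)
Your proof is correct and is essentially identical to the paper's own argument: the authors likewise deduce $D_\alpha p_{k,\alpha}=0$ from Theorem \ref{significancepkapolynomial} and then invoke the factorization $\Delta_{\Hu;\alpha}\,y^{\alpha+1}=\frac{1}{2i}D_\alpha$ preceding \eqref{Dalphaoperator}. Nothing further is needed.
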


\begin{proof}
By Theorem \ref{significancepkapolynomial} we have that 
$D_\alpha p_{k,\alpha}=0$. 
Therefore $\Delta_\alpha y^{\alpha+1}p_{k,\alpha}=0$ in $\Hu$ by 
the factorization formula preceding \eqref{Dalphaoperator}.   
\end{proof}

We now return to the $h_{k,\alpha}$'s.

\begin{theorem}\label{hkalincombofpka}
Let $\alpha\in\R$ and $k\in\N$. 
Let $h_{k,\alpha}$ be as in \eqref{hkpolynomials}. 
Then $h_{k,\alpha}$ is a linear combination of 
$p_{0,\alpha},\dots,p_{k,\alpha}$,  
where the $p_{j,\alpha}$'s are as in \eqref{eq:pkallealfa}. 
\end{theorem}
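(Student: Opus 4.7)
The plan is to decompose $h_{k,\alpha}$ into its homogeneous components and use the fact that the differential operator $D_\alpha$ respects the natural grading on $\C[z,\bar z]$ by total degree.

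First, I would write
\[
h_{k,\alpha}=\sum_{m=0}^{k} q_m,
\]
where each $q_m\in\C[z,\bar z]$ is homogeneous of degree $m$ (this is possible since $h_{k,\alpha}$ has degree at most $k$). Next, I would invoke Lemma \ref{dehka}: the function $z\mapsto (\im z)^{\alpha+1}h_{k,\alpha}(z)$ is $\alpha$-harmonic in $\Hu$, so by the factorization formula $\Delta_{\alpha;\Hu}y^{\alpha+1}=\frac{1}{2i}D_\alpha$ stated just before \eqref{Dalphaoperator}, we conclude that $D_\alpha h_{k,\alpha}=0$.

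The key observation is that, as remarked in the paragraph after \eqref{Dalphaoperator}, the operator $D_\alpha$ sends polynomials that are homogeneous of degree $m$ to polynomials that are homogeneous of degree $m-1$. Thus
\[
0=D_\alpha h_{k,\alpha}=\sum_{m=0}^{k} D_\alpha q_m,
\]
and since the summands on the right are homogeneous of distinct degrees $m-1$, each must vanish individually: $D_\alpha q_m=0$ for every $m=0,\dots,k$. Applying Theorem \ref{significancepkapolynomial} to each $q_m$ yields $q_m=c_m p_{m,\alpha}$ for some constant $c_m\in\C$, and consequently
\[
h_{k,\alpha}=\sum_{m=0}^{k} c_m p_{m,\alpha},
\]
which is the desired conclusion.

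There is no real obstacle here; the only point that requires a moment of care is verifying that the grading argument is legitimate, i.e.\ that $D_\alpha$ is homogeneous of degree $-1$ with respect to total degree in $(z,\bar z)$. This is immediate from the explicit form \eqref{Dalphaoperator} of $D_\alpha$, since each of the three terms $(z-\bar z)\partial\bar\partial$, $\bar\partial$ and $(\alpha+1)\partial$ decreases total degree by exactly one.
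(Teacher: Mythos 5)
Your proof is correct and follows essentially the same route as the paper: both decompose $h_{k,\alpha}$ into homogeneous components, use Lemma \ref{dehka} to conclude $D_\alpha h_{k,\alpha}=0$, exploit that $D_\alpha$ lowers total degree by one to kill each component separately, and then invoke Theorem \ref{significancepkapolynomial}. The only cosmetic difference is that the paper handles the degree-zero component by inspection rather than through Theorem \ref{significancepkapolynomial}, which changes nothing.
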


\begin{proof}
From Lemma \ref{dehka} we have that 
the product $y^{\alpha+1}h_{k,\alpha}$ is $\alpha$-harmonic in $\Hu$. 
Therefore $D_\alpha h_{k,\alpha}=0$ in $\C[z,\bar z]$, 
where $D_\alpha$ is as in \eqref{Dalphaoperator}.  
We next write the polynomial $h_{k,\alpha}$ as a sum of 
homo\-geneous polynomials: 
$h_{k,\alpha}=\sum_{j=0}^k p_j$, where $p_j$ is homogeneous of degree $j$. 
Clearly $p_0$ is a constant multiple of $p_{0,\alpha}$.
Applying the differential operator $D_\alpha$ we see that 
$$
0=D_\alpha h_{k,\alpha}=\sum_{j=1}^k D_\alpha p_j
$$ 
in $\C[z,\bar z]$. Since $D_\alpha p_j$ is homogeneous of degree $j-1$,
we have that $D_\alpha p_j=0$ for $j=1,\dots k$.  
From Theorem \ref{significancepkapolynomial} 
we have that $p_j$ is a constant multiple of $p_{j,\alpha}$ 
for $j=1,\dots k$. 
We conclude that $h_{k,\alpha}$ is a linear combination of 
$p_{0,\alpha},\dots,p_{k,\alpha}$.
\end{proof}

\section{A general representation theorem}
\label{representationthms}

In this section we consider $\alpha$-harmonic functions in $\Hu$ 
which are of temperate growth at infinity and vanish on the real line. 
A main task is to establish a re\-presentation of 
such functions. 
The analysis uses results from Section \ref{sectionangularderivatives}.

\begin{theorem}\label{alphaobstructionclass}
Let $\alpha>-1$.  
Let $u$ be an $\alpha$-harmonic function in $\Hu$ which is 
of temperate growth at infinity. 
Assume that $u(z)\to0$ as $\Hu\ni z\to x$ for every $x\in\R$.  
Then 
\begin{equation}\label{alphaobstructionfcn}
u(z)=\sum_{k=0}^n  c_k(\im z)^{\alpha+1} p_{k,\alpha}(z),\quad z\in\Hu,
\end{equation}
for some $n\in\N$ and $c_0,\dots,c_n\in\C$, 
where the $p_{k,\alpha}$'s are as in \eqref{eq:pkallealfa}.
\end{theorem}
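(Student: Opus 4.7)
The plan is to transfer the problem to the unit disc via the M\"obius transformation $\varphi$ of \eqref{mobiusmap} and exploit the Poisson integral representation theory from Section \ref{sectionpoissonintegrals} together with the angular derivative machinery of Section \ref{sectionangularderivatives}. Set $v(z) = \varphi'(z)^{-\alpha/2} u(\varphi(z))$ for $z \in \D$. By the conformal invariance property recalled after \eqref{weightedpullback}, $v$ is $\alpha$-harmonic in $\D$. From \eqref{mobiusmapderivative} one has $|\varphi(z)|^2/\im \varphi(z) \le 4/(1-|z|^2)$, so the temperate growth bound on $u$ at infinity on $\Hu$ converts into an estimate $|v(z)| \le C(1-|z|^2)^{-M}$ valid on $\{z \in \D : |\varphi(z)| > R\}$; on the complementary region $\varphi(z)$ stays in a bounded subset of $\Hu$, and $u$ is bounded there by continuity on $\Hu$ combined with a compactness argument using the pointwise boundary vanishing. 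Hence $v$ has temperate growth in $\D$, and since $\alpha > -1$ implies $\alpha \notin \Z^-$, Corollary \ref{genericcharacterizationPIrepresentation} supplies a distribution $f \in \De'(\T)$ with $v = P_\alpha[f]$.

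The next step is to show $\supp f \subset \{1\}$. Since the $L^1$-means of $P_\alpha$ are bounded for $\alpha > -1$, one has $v_r \to f$ in $\De'(\T)$ as $r \to 1$. For a test function $\psi \in C^\infty(\T)$ with $\supp \psi \subset \T \setminus \{1\}$, the boundary hypothesis on $u$ together with the fact that $\varphi$ maps $\T \setminus \{1\}$ into $\R$ gives $v_r(e^{i\theta}) \to 0$ pointwise for $e^{i\theta} \in \supp \psi$. A uniform $r$-independent bound on $|v_r|$ over $\supp \psi$ comes from $|\varphi'(re^{i\theta})^{-\alpha/2}| = (|1-re^{i\theta}|^2/2)^{\alpha/2}$ being bounded for $\theta$ in the support of $\psi$, together with a compactness argument which uses continuity of $u$ on $\Hu$ and the boundary vanishing to bound $u$ in a neighborhood of any compact subset of $\R$. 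Dominated convergence then yields $\langle v_r, \psi \rangle \to 0$, so $\langle f, \psi \rangle = 0$, establishing the support claim.

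By \cite[Theorem 2.3.4]{Hormander}, $f = \sum_{k=0}^n c_k \delta_1^{(k)}$ for some $n \in \N$ and $c_0,\dots,c_n \in \C$. Linearity of $P_\alpha$ and Corollary \ref{angularderivativePa} give
\begin{equation*}
v = \sum_{k=0}^n c_k (iA)^k P_\alpha,
\end{equation*}
and Theorem \ref{iApowerPa} rewrites this as $v = \bigl( \sum_{k=0}^n c_k (h_{k,\alpha}\circ\varphi)\bigr) P_\alpha$. Inserting $P_\alpha(z) = c\,\varphi'(z)^{-\alpha/2}(\im \varphi(z))^{\alpha+1}$ from Theorem \ref{Paaswpullback} cancels the factor $\varphi'(z)^{-\alpha/2}$ appearing in the definition of $v$, and setting $w = \varphi(z)$ produces
\begin{equation*}
u(w) = c\,(\im w)^{\alpha+1} \sum_{k=0}^n c_k h_{k,\alpha}(w), \quad w \in \Hu.
\end{equation*}
Theorem \ref{hkalincombofpka} expresses each $h_{k,\alpha}$ as a linear combination of $p_{0,\alpha}, \ldots, p_{k,\alpha}$, so relabelling the coefficients puts $u$ in the required form \eqref{alphaobstructionfcn}.

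The hardest part will be the support argument in the second paragraph: converting pointwise boundary vanishing of $u$ on $\R$ into distributional vanishing of $f$ off the point $1$ requires a uniform bound on $v_r$ away from $1$, while $\|v_r\|_\infty$ is not controlled uniformly as $r \to 1$. One must localize away from the point $1$ and combine continuity of $u$, the boundary hypothesis, and compactness to produce a bound uniform in $r$ — a step that is not immediate from the temperate growth alone.
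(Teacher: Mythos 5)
Your proposal is correct and follows essentially the same route as the paper's own proof: pull back by the M\"obius map, invoke the characterization of Poisson integrals of distributions to get $v=P_\alpha[f]$, localize $\supp f$ to $\{1\}$ via the boundary hypothesis and the convergence $v_r\to f$, and then unwind $(iA)^kP_\alpha$ through Theorems \ref{iApowerPa}, \ref{Paaswpullback} and \ref{hkalincombofpka}. The only differences are cosmetic (you cite Corollary \ref{genericcharacterizationPIrepresentation} instead of Theorem \ref{characterizationPIrepresentation}, and you spell out the compactness and dominated-convergence details that the paper leaves implicit).
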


\begin{proof}
We consider the weighted pull-back 
$$
v(z)=\varphi'(z)^{-\alpha/2}u(\varphi(z)),\quad z\in\D,
$$
where $\varphi$ is as in \eqref{mobiusmap}.  
From \cite[Theorem 1.1]{olofsson2017on} it follows that $v$ is $\alpha$-harmonic in $\D$.
From temperate growth of $u$ at infinity and vanishing of $u$ on $\R$
we have by a compactness argument that $v$ is of temperate growth in $\D$.
By Theorem \ref{characterizationPIrepresentation} we conclude that 
$v=P_\alpha[g]$ for some distribution $g\in\De'(\T)$. 
Since $\alpha>-1$, we have from general theory 
that $v_r\to g$ in $\De'(\T)$ as $r\to1$, 
where the $v_r$'s are defined as in \eqref{urfcn} 
(see for instance \cite[Theorem 5.4]{olofsson2013poisson}).
Since $u(z)\to0$ as $\Hu\ni z\to x$ for $x\in\R$,  
we see that $g$ vanishes on $\T\setminus\{1\}$ in a distributional sense.
Now since $\supp(g)\subset\{1\}$,
standard distribution theory dictates that 
$g=\sum_{k=0}^na_k\delta_1^{(k)}$ in $\De'(\T)$ 
for some $n\in\N$ and complex numbers $a_0,\dots, a_n\in\C$ 
(see H\"ormander \cite[Theorem 2.3.4]{Hormander}). 
By Corollary \ref{angularderivativePa} we have that 
$P_\alpha[\delta_1^{(k)}]=(iA)^kP_\alpha$ for $k\ge0$.
Passing to the Poisson integral we have that 
\begin{equation}\label{vprelformula}
v(z)= \sum_{k=0}^n a_k (iA)^kP_\alpha(z)
=\Big(\sum_{k=0}^n a_k h_{k,\alpha}(\varphi(z)) \Big) P_\alpha(z)
\end{equation}
for $z\in\D$, where the last equality follows by Theorem \ref{iApowerPa}. 
By Theorem \ref{hkalincombofpka} the function  
$h_{k,\alpha}$ is  a linear combination of 
$p_{0,\alpha},\dots,p_{k,\alpha}$. From \eqref{vprelformula} we conclude that 
\begin{equation*}
v(z)=\Big(\sum_{k=0}^n b_k p_{k,\alpha}(\varphi(z)) \Big) P_\alpha(z),\quad z\in\D,
\end{equation*}
for some $b_0,\dots, b_n\in\C$.   
Invoking Theorem \ref{Paaswpullback} we see that 
\begin{equation*}
v(z)=\varphi'(z)^{-\alpha/2} 
\Big(\sum_{k=0}^n cb_k (\im\varphi(z))^{\alpha+1} p_{k,\alpha}(\varphi(z)) \Big)
\end{equation*}
for $z\in\D$, where $c\varphi'(0)^{-\alpha/2}=1$.
A passage back to the function $u$ yields 
\eqref{alphaobstructionfcn} with $c_k=cb_k$ for $k=0,\dots,n$. 
This completes the proof of the theorem. 
\end{proof}

For $\alpha>-1$ and $n\in\N$, 
we denote by $\mathcal{V}_{\alpha,n}$ the set of all functions $u$ 
of the form \eqref{alphaobstructionfcn} for some $c_0,\dots,c_n\in\C$, 
where the $p_{k,\alpha}$'s are as in \eqref{eq:pkallealfa}. 
We also set 
$$
\mathcal{V}_{\alpha}=\cup_{n=0}^\infty\mathcal{V}_{\alpha,n}.
$$
The space $\mathcal{V}_{\alpha,n}$ is a complex vector space 
of finite dimension $n+1$. 
Notice that the set $\mathcal{V}_{\alpha}$ is a complex vector space 
which is naturally 
filtered by the sets $\mathcal{V}_{\alpha,n}$ for $n=0,1,\dots$.

Observe that the conclusion of Theorem \ref{alphaobstructionclass} 
says that $u\in\mathcal{V}_{\alpha}$. 
We next describe the class $\mathcal{V}_{0}$ in some more detail.

\begin{prop}\label{harmonicobstructionclass}
Let $n\in\N$. Then a function $u$ belongs to the space $\mathcal{V}_{0,n}$ 
if and only if it has the form  
\begin{equation}\label{harmonicobstructionfcn}   
u(z)=\sum_{k=1}^{n+1}  c_k \im(z^k),\quad z\in\Hu,
\end{equation}
for some $c_1,\dots,c_{n+1}\in\C$.
\end{prop}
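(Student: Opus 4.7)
The plan is to reduce to an explicit computation of the polynomials $p_{k,0}$ and then use the fact that on $\Hu$ the imaginary part $\im z$ is nonzero, so that a geometric-series identity applies cleanly.

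First I would specialize the definition \eqref{eq:pkallealfa} to $\alpha=0$. Since $(\alpha+1)_j=(1)_j=j!$, the coefficients in $p_{k,\alpha}$ simplify to $1$ for every $j$, giving
\begin{equation*}
p_{k,0}(z)=\sum_{j=0}^{k}z^{k-j}\bar z^{j}.
\end{equation*}
This is a geometric sum in the two variables $z$ and $\bar z$, so for $z\in\Hu$ (where $z\neq\bar z$) we have
\begin{equation*}
p_{k,0}(z)=\frac{z^{k+1}-\bar z^{k+1}}{z-\bar z}.
\end{equation*}

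Next I would multiply by $(\im z)^{\alpha+1}=\im z$ (the $\alpha=0$ weight in the definition of $\mathcal{V}_{0,n}$) and use $z-\bar z=2i\im z$:
\begin{equation*}
(\im z)\,p_{k,0}(z)=\frac{\im z}{z-\bar z}\bigl(z^{k+1}-\bar z^{k+1}\bigr)=\frac{z^{k+1}-\bar z^{k+1}}{2i}=\im(z^{k+1}).
\end{equation*}
Thus the basis $\{(\im z)p_{k,0}\}_{k=0}^{n}$ of $\mathcal{V}_{0,n}$ coincides, after the shift $k\mapsto k+1$, with $\{\im(z^{k})\}_{k=1}^{n+1}$. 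This gives both inclusions at once: $u\in\mathcal{V}_{0,n}$ if and only if $u$ is of the form \eqref{harmonicobstructionfcn}.

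No real obstacle here; the only conceptual point is spotting the geometric-series collapse $p_{k,0}(z)=(z^{k+1}-\bar z^{k+1})/(z-\bar z)$, which is enabled precisely by the fact that $(1)_j/j!=1$ at $\alpha=0$. Everything else is a one-line cancellation using $z-\bar z=2i\im z$.
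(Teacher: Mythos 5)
Your proposal is correct and follows essentially the same route as the paper: the geometric-sum collapse $p_{k,0}(z)=(z^{k+1}-\bar z^{k+1})/(z-\bar z)$ followed by the cancellation with $z-\bar z=2i\im z$ to obtain $(\im z)p_{k,0}(z)=\im(z^{k+1})$. The observation that $(1)_j/j!=1$ is the same specialization the paper makes implicitly, so there is nothing to add.
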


\begin{proof}  
A calculation using the formula for a finite geometric sum shows that  
$$
p_{k,0}(z)=\sum_{j=0}^k z^{k-j}\bar z^j=\frac{z^{k+1}-\bar z^{k+1} }{z-\bar z}.
$$
Therefore 
$$
(\im z)p_{k,0}(z)=\frac{z-\bar z}{2i} \frac{z^{k+1}-\bar z^{k+1}}{z-\bar z} =\im(z^{k+1}).
$$
The result is now evident from definition of the space $\mathcal{V}_{0,n}$. 
\end{proof}

We shall next investigate the order of growth of 
a function 
of the form   
\begin{equation}\label{ukafunction}
u_{k,\alpha}(z)=   (\im z)^{\alpha+1} p_{k,\alpha}(z),\quad z\in\Hu,
\end{equation}
for some $k\in\N$ and $\alpha>-1$, 
where $p_{k,\alpha}$ is as in \eqref{eq:pkallealfa}.  
From the proof of Proposition \ref{harmonicobstructionclass} 
we have that
$$
u_{k,0}(z)= \im(z^{k+1}) ,\quad z\in\C,
$$ 
for $k\in\N$.
 
\begin{prop}\label{tempgrowthprop}
Let $u_{k,\alpha}$ be as in \eqref{ukafunction} 
for some $k\in\N$ and $\alpha>-1$. Then   
\begin{equation*}
\lvert u_{k,\alpha}(z)\rvert 
\leq \max_{0<\theta<\pi} 
\sin^{k+2\alpha+2}(\theta) \lvert p_{k,\alpha}(e^{i\theta})\rvert 
\Big(\lvert z\rvert^2/\im(z)\Big)^{k+\alpha+1}
\end{equation*}
for $z\in \Hu$. In particular, 
if $u\in\mathcal{V}_{\alpha,n}$ for some $n\in\N$, 
then $u$ has order of growth at most $n+\alpha+1$ at infinity.
\end{prop}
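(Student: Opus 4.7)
The plan is to exploit the homogeneity of the polynomial $p_{k,\alpha}$, which has degree $k$ by construction \eqref{eq:pkallealfa}. Introducing polar coordinates $z = re^{i\theta}$ with $r = \lvert z\rvert$ and $\theta \in (0,\pi)$, one has $\im z = r\sin\theta$ and $p_{k,\alpha}(z) = r^{k} p_{k,\alpha}(e^{i\theta})$, from which
$$\lvert u_{k,\alpha}(z)\rvert = r^{k+\alpha+1}\sin^{\alpha+1}(\theta) \lvert p_{k,\alpha}(e^{i\theta})\rvert.$$
Simultaneously, $\lvert z\rvert^2/\im z = r/\sin\theta$, and raising to the power $k+\alpha+1$ gives
$$\bigl(\lvert z\rvert^2/\im z\bigr)^{k+\alpha+1} = r^{k+\alpha+1}/\sin^{k+\alpha+1}(\theta).$$
Dividing yields the pointwise identity
$$\lvert u_{k,\alpha}(z)\rvert = \sin^{k+2\alpha+2}(\theta)\lvert p_{k,\alpha}(e^{i\theta})\rvert \bigl(\lvert z\rvert^2/\im z\bigr)^{k+\alpha+1},$$
and the asserted bound follows by majorizing the $\theta$-dependent factor by its maximum over $\theta \in (0,\pi)$.

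I would then verify that this supremum is in fact a finite maximum: because $\alpha > -1$, the exponent $k + 2\alpha + 2$ is strictly positive, so $\sin^{k+2\alpha+2}(\theta)$ extends continuously by zero to the endpoints of $[0,\pi]$. Combined with the continuity of $\theta \mapsto \lvert p_{k,\alpha}(e^{i\theta})\rvert$ on the compact interval $[0,\pi]$, this produces a continuous function on $[0,\pi]$ that attains its maximum.

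For the final assertion, any $u \in \mathcal{V}_{\alpha,n}$ decomposes as $u = \sum_{k=0}^n c_k u_{k,\alpha}$ by definition. Since $\im z \leq \lvert z\rvert$ gives $\lvert z\rvert^2/\im z \geq \lvert z\rvert$ for all $z \in \Hu$, the quantity $\lvert z\rvert^2/\im z$ is at least $1$ whenever $\lvert z\rvert \geq 1$, so $(\lvert z\rvert^2/\im z)^{k+\alpha+1} \leq (\lvert z\rvert^2/\im z)^{n+\alpha+1}$ in this regime for every $k \leq n$. Combining the per-term bound from the first part with the triangle inequality then supplies an estimate of temperate growth type with exponent $N = n+\alpha+1$.

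There is no substantial obstacle here; the only non-mechanical point is noticing that the hypothesis $\alpha > -1$ is exactly what forces $k + 2\alpha + 2 > 0$ and thereby guarantees finiteness of the constant in the bound.
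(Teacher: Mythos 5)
Your argument is correct and is essentially identical to the paper's proof: both pass to polar coordinates, use the degree-$k$ homogeneity of $p_{k,\alpha}$ to obtain the pointwise identity $\lvert u_{k,\alpha}(z)\rvert=\sin^{k+2\alpha+2}(\theta)\lvert p_{k,\alpha}(e^{i\theta})\rvert\,(\lvert z\rvert^2/\im z)^{k+\alpha+1}$, and then take the maximum over $\theta$. Your two added remarks --- that $\alpha>-1$ makes the exponent $k+2\alpha+2$ positive so the maximum is finite, and that $\lvert z\rvert^2/\im z\geq\lvert z\rvert>1$ justifies summing the terms up to the top exponent $n+\alpha+1$ --- are correct details the paper leaves implicit.
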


\begin{proof} 
Let $z\in\Hu$ and write $z=te^{i\theta}$ with $t>0$ and $0<\theta<\pi$.
By homogeneity 
we have that 
$$
u_{k,\alpha}(z)
=\sin^{\alpha+1}(\theta) p_{k,\alpha}(e^{i\theta})t^{k+\alpha+1}. 
$$
Notice also that $\lvert z\rvert^2/\im(z)=t/\sin(\theta)$. 
From these two facts we have that  
\begin{equation*}
u_{k,\alpha}(z)=
\sin^{k+2\alpha+2}(\theta) p_{k,\alpha}(e^{i\theta})  
\Big(\lvert z\rvert^2/\im(z)\Big)^{k+\alpha+1}
\end{equation*} 
for $z\in\Hu$, where $\theta=\arg z$. 
This yields the conclusion of the proposition. 
\end{proof}

The next result points out the significance of 
the class $\mathcal{V}_\alpha$.

\begin{cor}\label{Valphacharacterization}
Let $\alpha>-1$. Then a function $u$ belongs to 
the class $\mathcal{V}_\alpha$ 
if and only if it is $\alpha$-harmonic in $\Hu$, 
has temperate growth at infinity and 
vanishes on the real line in the sense that  
$u(z)\to0$ as $\Hu\ni z\to x$ for every $x\in\R$. 
\end{cor}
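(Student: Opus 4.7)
The plan is to observe that this corollary packages together results already established, namely Theorem \ref{alphaobstructionclass} together with Corollary \ref{reprtermharmonic} and Proposition \ref{tempgrowthprop}. The ``if'' direction is precisely the conclusion of Theorem \ref{alphaobstructionclass}, so there is nothing new to do there; the work lies in establishing the ``only if'' direction, i.e., verifying that every element of $\mathcal{V}_\alpha$ has all three properties.

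For the ``only if'' direction, let $u \in \mathcal{V}_\alpha$, so that $u$ is a finite linear combination of functions of the form $u_{k,\alpha}(z) = (\im z)^{\alpha+1} p_{k,\alpha}(z)$. First, I would note that Corollary \ref{reprtermharmonic} gives $\alpha$-harmonicity of each $u_{k,\alpha}$ in $\Hu$, and hence of $u$ by linearity of $\Delta_{\Hu;\alpha}$. Next, Proposition \ref{tempgrowthprop} supplies the estimate
$$
\lvert u_{k,\alpha}(z)\rvert \leq C_{k,\alpha} \bigl(\lvert z\rvert^2/\im(z)\bigr)^{k+\alpha+1},\quad z\in\Hu,
$$
which shows each $u_{k,\alpha}$ is of temperate growth at infinity; taking the finite sum preserves this property with order of growth at most $n+\alpha+1$ if $u\in\mathcal{V}_{\alpha,n}$.

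Finally, for the vanishing Dirichlet condition on $\R$, I would exploit the factor $(\im z)^{\alpha+1}$: since $\alpha > -1$, the exponent $\alpha+1$ is strictly positive, so $(\im z)^{\alpha+1} \to 0$ as $\Hu \ni z \to x$ for any $x \in \R$. The remaining factor $p_{k,\alpha}(z)$ is a polynomial in $z$ and $\bar z$, hence locally bounded near any finite point of $\R$. Multiplying a quantity tending to zero by a bounded quantity gives the desired limit, establishing \eqref{vanishingDirichletbdrycondition} for each $u_{k,\alpha}$ and thus for $u$.

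There is no substantive obstacle here, as the corollary is essentially a bookkeeping statement assembling pieces proved earlier; the only point that deserves explicit attention is making sure the hypothesis $\alpha > -1$ is used to guarantee $(\im z)^{\alpha+1} \to 0$ at the real boundary, which is exactly what ties the representation class $\mathcal{V}_\alpha$ to the Dirichlet vanishing condition.
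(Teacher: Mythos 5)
Your proposal is correct and matches the paper's own proof essentially verbatim: the ``if'' direction is Theorem \ref{alphaobstructionclass}, and the ``only if'' direction combines Corollary \ref{reprtermharmonic}, Proposition \ref{tempgrowthprop}, and the observation that $(\im z)^{\alpha+1}\to0$ at the real boundary since $\alpha+1>0$ (a step the paper dismisses as ``evident'' but which you spell out correctly).
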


\begin{proof}
The if part is a re\-statement of Theorem \ref{alphaobstructionclass}.
From Corollary \ref{reprtermharmonic} we have 
that every function $u\in \mathcal{V}_\alpha$ 
is $\alpha$-harmonic in $\Hu$. 
From Proposition \ref{tempgrowthprop} we have that 
every function $u\in\mathcal{V}_\alpha$ is of temperate growth at infinity. 
It is evident that every function $u\in \mathcal{V}_\alpha$ 
vanishes on the real line. 
\end{proof}

Corollary \ref{Valphacharacterization} explains how 
the space $\mathcal{V}_\alpha$ 
can be thought of as the class of obstructions for 
the uniqueness problem for $\alpha$-harmonic functions in $\Hu$ 
with respect to a vanishing Dirichlet boundary value on the real line 
and temperate growth at infinity.

\begin{lemma}\label{ulimitlemma} 
Let $u\in\mathcal{V}_{\alpha,n}$ be of the form \eqref{alphaobstructionfcn} 
for some $\alpha>-1$ and $n\in\N$. 
Then 
\begin{equation*}
\lim_{t\to+\infty}u(te^{i\theta})/t^{n+\alpha+1}=c_n \sin^{\alpha+1}(\theta) p_{n,\alpha}(e^{i\theta})
\end{equation*}
for $0<\theta<\pi$. 
\end{lemma}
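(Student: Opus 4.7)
The plan is to exploit the homogeneity built into the representation \eqref{alphaobstructionfcn} together with the elementary observation that along the ray $z=te^{i\theta}$ the imaginary part scales linearly in $t$. Concretely, I fix $\theta\in(0,\pi)$ and evaluate $u$ at $z=te^{i\theta}$ for $t>0$.

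First, I note that $\im(te^{i\theta})=t\sin\theta$, so $(\im(te^{i\theta}))^{\alpha+1}=(\sin\theta)^{\alpha+1}t^{\alpha+1}$. Next, since $p_{k,\alpha}\in\C[z,\bar z]$ is a sum of monomials $z^{k-j}\bar z^{j}$, it is homogeneous of degree $k$ in the sense that $p_{k,\alpha}(te^{i\theta})=t^{k}p_{k,\alpha}(e^{i\theta})$ for $t>0$. Inserting these two identities into \eqref{alphaobstructionfcn} gives
\begin{equation*}
u(te^{i\theta})=(\sin\theta)^{\alpha+1}\sum_{k=0}^{n}c_{k}\,p_{k,\alpha}(e^{i\theta})\,t^{k+\alpha+1}.
\end{equation*}

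Dividing through by $t^{n+\alpha+1}$ yields
\begin{equation*}
\frac{u(te^{i\theta})}{t^{n+\alpha+1}}=(\sin\theta)^{\alpha+1}\sum_{k=0}^{n}c_{k}\,p_{k,\alpha}(e^{i\theta})\,t^{k-n}.
\end{equation*}
For $k<n$ the factor $t^{k-n}$ tends to $0$ as $t\to+\infty$, while for $k=n$ the factor is $1$. Passing to the limit gives the desired equality
\begin{equation*}
\lim_{t\to+\infty}\frac{u(te^{i\theta})}{t^{n+\alpha+1}}=c_{n}(\sin\theta)^{\alpha+1}p_{n,\alpha}(e^{i\theta}).
\end{equation*}

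There is no genuine obstacle here; the only thing to keep in mind is to combine the $t^{\alpha+1}$ factor from the imaginary part with the $t^{k}$ factor from the homogeneity of $p_{k,\alpha}$ before reading off the dominant term. The argument makes no use of the hypothesis $\alpha>-1$ beyond ensuring that $(\sin\theta)^{\alpha+1}$ is well defined, and it works pointwise in $\theta\in(0,\pi)$, which is precisely the form in which the lemma is stated.
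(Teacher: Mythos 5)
Your proof is correct and follows exactly the same route as the paper's: expand $u(te^{i\theta})$ using the homogeneity of $p_{k,\alpha}$ and the scaling of $\im(te^{i\theta})$, then pass to the limit so only the $k=n$ term survives. The paper's version is just a terser statement of the same computation.
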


\begin{proof}
Let $t>0$ and $0<\theta<\pi$. 
By homogeneity 
we have that 
$$
u(te^{i\theta})=
\sum_{k=0}^n  c_k(\sin \theta)^{\alpha+1} p_{k,\alpha}(e^{i\theta}) t^{k+\alpha+1}.
$$
The result now follows by a passage to the limit.
\end{proof}

We shall dissect the space $\mathcal{V}_\alpha$ using 
orders of growth at infinity. 
We say that a function $u$ in $\Hu$ has order of growth $n+\alpha+1$ 
at infinity if it satisfies an estimate of the form 
\begin{equation}\label{utempgrowth}
\lvert u(z)\rvert\leq C(\lvert z\rvert^2/\im(z))^{n+\alpha+1}
\end{equation}
for $z\in\Hu$ with $\lvert z\rvert>R$, 
where $C\geq0$ and $R>1$ are finite constants.

\begin{lemma}\label{coefficientlemma}
Let $u\in \mathcal{V}_\alpha$ for some $\alpha>-1$. Let $n\in\N$. 
Then $u\in \mathcal{V}_{\alpha,n}$ if and only if 
\eqref{utempgrowth} holds.
Moreover, there exists a constant $C'=C'_{\alpha,n,R}$ such that 
$\lvert c_k\rvert\leq C'C$ for $k=0,1,\dots,n$ whenever 
$u\in\mathcal{V}_{\alpha,n}$ has the form \eqref{alphaobstructionfcn} 
and $C$ is as in \eqref{utempgrowth}.
\end{lemma}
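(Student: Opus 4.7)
The statement bundles three claims: the forward implication, the reverse implication, and the quantitative coefficient bound. I plan to dispatch these in turn. The forward direction is essentially contained in Proposition \ref{tempgrowthprop}: if $u \in \mathcal{V}_{\alpha,n}$ then $u$ has order of growth at most $n+\alpha+1$ at infinity, so \eqref{utempgrowth} holds with some $C$ depending on the coefficients of $u$ and any sufficiently large $R$.

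For the reverse implication, I suppose $u \in \mathcal{V}_\alpha$ satisfies \eqref{utempgrowth}; then $u \in \mathcal{V}_{\alpha,N}$ for some $N \in \N$. Assuming $u \not\equiv 0$, I take $N$ to be the largest index with $c_N \neq 0$ in the unique representation $u = \sum_{k=0}^N c_k u_{k,\alpha}$, and plan to argue $N \leq n$ by contradiction. Lemma \ref{ulimitlemma} yields
\[
\lim_{t \to +\infty} u(te^{i\theta})/t^{N+\alpha+1} = c_N \sin^{\alpha+1}(\theta)\, p_{N,\alpha}(e^{i\theta}),
\]
while from $\lvert te^{i\theta}\rvert^2/\im(te^{i\theta}) = t/\sin\theta$ and \eqref{utempgrowth} I get, for $t > R$,
\[
\lvert u(te^{i\theta})/t^{N+\alpha+1}\rvert \leq C\, t^{n-N}/\sin^{n+\alpha+1}(\theta).
\]
If $N > n$ the right-hand side tends to $0$, forcing $c_N p_{N,\alpha}(e^{i\theta}) = 0$ for every $\theta \in (0,\pi)$. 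Choosing $\theta$ where $p_{N,\alpha}(e^{i\theta}) \neq 0$ then delivers the contradiction $c_N = 0$.

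For the quantitative bound I will appeal to finite-dimensional norm equivalence on $\mathcal{V}_{\alpha,n}$, which is a complex vector space of dimension $n+1$. I equip it with two norms, $\|u\|_\infty = \max_{0 \leq k \leq n} \lvert c_k\rvert$ and $\|u\|_R = \sup_{z \in \Hu,\, \lvert z\rvert > R} \lvert u(z)\rvert/(\lvert z\rvert^2/\im z)^{n+\alpha+1}$. The first is trivially a norm; for the second, finiteness follows from Proposition \ref{tempgrowthprop}, and homogeneity and the triangle inequality are clear. Non-degeneracy is obtained by iterating Lemma \ref{ulimitlemma}: if $\|u\|_R = 0$ then $u$ vanishes for $\lvert z\rvert > R$, which forces the top coefficient $c_n = 0$ by the same limit-comparison argument as above, then $c_{n-1} = 0$ after replacing $u$ by $u - c_n u_{n,\alpha}$, and so on. Equivalence of norms then gives a constant $C' = C'_{\alpha,n,R}$ with $\|u\|_\infty \leq C' \|u\|_R$, and since $\|u\|_R \leq C$ under the hypothesis the bound $\lvert c_k\rvert \leq C'C$ follows.

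The main technical point, which enters in both parts of the argument above, is that $p_{N,\alpha}$ does not vanish identically on the upper semicircle. This is where the assumption $\alpha > -1$ is decisive: it ensures that the Pochhammer coefficients $(\alpha+1)_j/j!$ appearing in $p_{N,\alpha}$ are all strictly positive, so $p_{N,\alpha}(e^{i\theta}) = \sum_{j=0}^N \frac{(\alpha+1)_j}{j!}\, e^{i(N-2j)\theta}$ is a nontrivial trigonometric polynomial with only finitely many zeros on $[0,\pi]$.
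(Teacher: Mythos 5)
Your proposal is correct and follows essentially the same route as the paper: the forward direction via Proposition \ref{tempgrowthprop}, the reverse direction by applying Lemma \ref{ulimitlemma} along a ray where the top homogeneous polynomial $p_{N,\alpha}$ does not vanish, and the coefficient bound by equivalence of norms on the $(n+1)$-dimensional space $\mathcal{V}_{\alpha,n}$, with the best constant in \eqref{utempgrowth} shown to be a genuine norm via the same limit argument. Your added justification that $p_{N,\alpha}(e^{i\theta})$ has only finitely many zeros on $(0,\pi)$ (positivity of the Pochhammer coefficients for $\alpha>-1$) is a point the paper leaves implicit, but it does not change the argument.
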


\begin{proof}
From Proposition \ref{tempgrowthprop} we know that \eqref{utempgrowth} 
holds if $u\in\mathcal{V}_{\alpha,n}$. 
Assume next that $u\in\mathcal{V}_{\alpha}$ satisfies \eqref{utempgrowth}. 
We shall prove that $u\in\mathcal{V}_{\alpha,n}$.  
If $u\in\mathcal{V}_{\alpha,0}$ there is nothing to prove. 
Assume therefore that $u\in\mathcal{V}_{\alpha,m}$ 
and $u\not\in\mathcal{V}_{\alpha,m-1}$ for some $m\in\Z^+$.
An application of Lemma \ref{ulimitlemma} 
with $0<\theta<\pi$ chosen such that $p_{m,\alpha}(e^{i\theta})\neq0$ 
shows that $u$ has order of growth at least $m+\alpha+1$. 
Therefore $m\leq n$, so that $u\in\mathcal{V}_{\alpha,n}$.

We equip the space $\mathcal{V}_{\alpha,n}$ 
with the semi-norm given by the best constant $C$ in \eqref{utempgrowth}.
Another application of Lemma \ref{ulimitlemma} shows that this latter 
semi-norm is in fact a norm. The space $\mathcal{V}_{\alpha,n}$ is thus a 
normed complex vector space of finite dimension $n+1$. 
Since any two norms on such a space are equivalent, 
we conclude that  
$\lvert c_k\rvert\leq C'C$ for $k=0,1,\dots,n$ 
whenever $u\in \mathcal{V}_{\alpha,n}$.
\end{proof}

We denote by $\De'(\R)$ the space of distributions on 
the real line $\R$. An integrable function $f\in L^1(\R)$ 
is identified with the distribution  
$$
\langle f,\varphi\rangle=\int_{-\infty}^\infty f(x)\varphi(x)\, dx,
\quad \varphi \in C_0^\infty(\R), 
$$ 
where $C_0^\infty(\R)$ is the space of indefinitely differentiable 
test functions on $\R$ with compact support. 
By $u_j\to u$ in $\De'(\R)$ as $j\to\infty$ we understand that 
$\lim_{j\to\infty}\langle u_j,\varphi\rangle = \langle u,\varphi\rangle$ 
for every  $\varphi \in C_0^\infty(\R)$. 
A standard reference for distribution theory is H\"ormander \cite{Hormander}. 

We shall next extend the validity of Theorem \ref{alphaobstructionclass} 
to allow for a distributional boundary value.

\begin{theorem}\label{alphaobstructionclassdistributionalversion}
Let $\alpha>-1$.  
Let $u$ be an $\alpha$-harmonic function in $\Hu$ which is 
of temperate growth at infinity. 
Assume that $\lim_{y\to0+} u(\cdot+iy)=0$ in $\De'(\R)$.
Then $u\in \mathcal{V}_\alpha$. 
\end{theorem}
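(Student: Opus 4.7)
The plan is to adapt the proof of Theorem~\ref{alphaobstructionclass}: the only step that genuinely needs to be redone is the identification of the boundary distribution of the pull-back as being supported at a single point of $\T$.

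Concretely, I would set
\[
v(z)=\varphi'(z)^{-\alpha/2}u(\varphi(z)),\quad z\in\D,
\]
with $\varphi$ the M\"obius map \eqref{mobiusmap}, so that $v$ is $\alpha$-harmonic in $\D$ by \cite[Theorem 1.1]{olofsson2017on}. To show $v$ is of temperate growth in $\D$, I would exploit that near $z=1$ one has $|\varphi(z)|^2/\im\varphi(z)\sim 4/(1-|z|^2)$ while $|\varphi'(z)^{-\alpha/2}|\sim|1-z|^\alpha$, so temperate growth of $u$ at infinity transfers directly. Away from $z=1$, pointwise control of $u(x+iy)$ for $x$ in a bounded interval of $\R$ and $y$ small can be extracted from the distributional hypothesis together with local sub-mean-value estimates for $\alpha$-harmonic functions on balls $B(x+iy,y/2)\subset\Hu$, giving a polynomial bound in $1/y$ and hence a temperate growth bound for $v$ near $\T\setminus\{1\}$.

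Granted temperate growth of $v$, Theorem~\ref{characterizationPIrepresentation} yields $v=P_\alpha[g]$ in $\D$ for some $g\in\De'(\T)$ (the spectral condition is automatic since $\spec(P_\alpha)=\Z$ when $\alpha\in\R\setminus\Z^-$, and $\alpha>-1$), and since $\alpha>-1$ one has $v_r\to g$ in $\De'(\T)$ as $r\to1$. The main obstacle is to verify $\supp g\subset\{1\}$. For this I would fix $\psi\in C^\infty(\T)$ with $\supp\psi\subset\T\setminus\{1\}$ and replace the radial family $\{|z|=r\}$ by the horocyclic family $\Gamma_y=\varphi^{-1}(\{\im w=y\})$ of circles tangent to $\T$ at $1$, whose $\varphi$-images are the horizontal lines $\im w=y$ in $\Hu$. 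Parameterizing $\Gamma_y$ by $x\in\R$ via $z(x,y)=\varphi^{-1}(x+iy)$ one has $v(z(x,y))=\varphi'(z(x,y))^{-\alpha/2}u(x+iy)$, so the natural horocyclic pairing of $v|_{\Gamma_y}$ against the lift of $\psi$ becomes an integral of $u(\cdot+iy)$ against a test function in $C_0^\infty(\R)$, which tends to $0$ by hypothesis. Using that $v$ is $\alpha$-harmonic of temperate growth, the radial and horocyclic distributional boundary values of $v$ must agree on $\T\setminus\{1\}$, whence $\langle g,\psi\rangle=0$ and $\supp g\subset\{1\}$. I expect the rigorous identification of these two boundary-value notions to be the principal technical difficulty.

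Once $\supp g\subset\{1\}$ is established, the argument proceeds exactly as at the end of the proof of Theorem~\ref{alphaobstructionclass}: by H\"ormander \cite[Theorem 2.3.4]{Hormander}, $g=\sum_{k=0}^n a_k\delta_1^{(k)}$ in $\De'(\T)$ for some $n\in\N$ and $a_0,\dots,a_n\in\C$; Corollary~\ref{angularderivativePa} gives $v=\sum_{k=0}^n a_k(iA)^kP_\alpha$; Theorem~\ref{iApowerPa} rewrites this as $(\sum_k a_k h_{k,\alpha}\circ\varphi)P_\alpha$; Theorem~\ref{hkalincombofpka} re-expresses the combination in terms of $p_{0,\alpha},\dots,p_{n,\alpha}$; and Theorem~\ref{Paaswpullback}, together with undoing the weighted pull-back by $\varphi$, yields a representation $u(z)=\sum_{k=0}^n c_k(\im z)^{\alpha+1}p_{k,\alpha}(z)$ on $\Hu$, so $u\in\mathcal{V}_\alpha$.
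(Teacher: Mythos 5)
Your plan diverges from the paper's proof, and the two places you yourself flag as delicate are genuine gaps, not routine verifications. First, temperate growth of the pull-back $v$ near $\T\setminus\{1\}$ does not follow easily from the hypotheses: the distributional condition $\lim_{y\to0+}u(\cdot+iy)=0$ in $\De'(\R)$ only controls pairings $\int u(x+iy)\varphi(x)\,dx$ (without absolute values, and only along horizontal slices), so passing to a pointwise bound $\lvert u(x+iy)\rvert\leq Cy^{-M}$ requires Banach--Steinhaus plus an interior elliptic sup-estimate for the degenerate operator $\partial(\im z)^{-\alpha}\bar\partial$ with constants that must be tracked under the rescaling $z\mapsto(z-x)/y$; none of this is set up in the paper and your sketch does not supply it. Second, and more seriously, the identification of the radial boundary value $g=\lim_{r\to1}v_r$ with the horocyclic limits along $\Gamma_y=\varphi^{-1}(\{\im w=y\})$ is exactly the content you would need to conclude $\supp g\subset\{1\}$, and you leave it as an acknowledged open step. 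Proving it would amount to a localization theory for $P_\alpha$-integrals along tangential circles, which is a substantial project in its own right; as written, the argument is circular (you would want to use $v=P_\alpha[g]$ and regularity of $g$ away from $1$ to compare the two limits, but the regularity of $g$ away from $1$ is what you are trying to establish).

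The paper avoids both difficulties with a short regularization argument that you should be aware of: set $u_\varepsilon(z)=\int_{-\infty}^\infty u(z-t)\psi_\varepsilon(t)\,dt$ for a mollifier $\psi_\varepsilon$ acting in the horizontal variable. Since $\Delta_{\Hu;\alpha}$ is translation invariant in $x$, each $u_\varepsilon$ is $\alpha$-harmonic; the distributional hypothesis upgrades to the pointwise boundary condition $u_\varepsilon(z)\to0$ as $\Hu\ni z\to x$ for every $x\in\R$; and temperate growth at infinity holds with constants uniform in $0<\varepsilon<1$. Theorem \ref{alphaobstructionclass} then gives $u_\varepsilon\in\mathcal{V}_\alpha$, the uniform growth bound together with Lemma \ref{coefficientlemma} places every $u_\varepsilon$ in a fixed finite-dimensional space $\mathcal{V}_{\alpha,n}$ with uniformly bounded coefficients $c_k(\varepsilon)$, and a compactness argument in $\C^{n+1}$ combined with $u_\varepsilon\to u$ locally uniformly yields $u\in\mathcal{V}_{\alpha,n}$. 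In short: mollify first so that the already-proved Theorem \ref{alphaobstructionclass} applies verbatim, rather than re-running its proof under the weaker boundary hypothesis.
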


\begin{proof}
The theorem is proved by a regularization argument. 
Let $\psi\in C_0^\infty(\R)$ be a non\-negative compactly supported 
test function with $\int_{-\infty}^\infty\psi(x)\, dx=1$ and  
set $\psi_\varepsilon(x)=\psi(x/\varepsilon)/\varepsilon$ 
for $x\in\R$ and   $\varepsilon>0$.
We consider the regularizations 
$$
u_\varepsilon(z)=\int_{-\infty}^\infty u(z-t)\psi_\varepsilon(t)\, dt,\quad z\in\Hu,
$$ 
for $\varepsilon>0$. 
A differentiation under the integral shows that 
the function $u_\varepsilon$ is $\alpha$-harmonic in $\Hu$.  
It is straight\-forward to check that  
$u_\varepsilon(z)\to0$ as $\Hu\ni z\to x$ for every $x\in\R$. 
Using that $u$ is of temperate growth at infinity it 
is straight\-forward to check that  
\begin{equation}\label{tempgrowthbound}
\lvert u_\varepsilon(z)\rvert\leq C(\lvert z\rvert^2/\im(z))^{n+\alpha+1}
\end{equation}
for $z\in\Hu$ with $\lvert z\rvert>R$ and $0<\varepsilon<1$, 
where $C\geq0$ and $R>1$ are finite constants and $n\in\N$. 
Notice also that $u=\lim_{\varepsilon\to0}u_\varepsilon$ in $\Hu$ in the 
sense of normal convergence.

We now proceed to details.
By Theorem \ref{alphaobstructionclass} we have that 
$u_\varepsilon\in \mathcal{V}_\alpha$.  
From \eqref{tempgrowthbound} and Lemma \ref{coefficientlemma} 
we have that $u_\varepsilon\in \mathcal{V}_{\alpha,n}$.   
Therefore 
\begin{equation}\label{alphaobstructionfcnregularization}
u_\varepsilon(z)=\sum_{k=0}^{n}  c_k(\varepsilon)
(\im z)^{\alpha+1} p_{k,\alpha}(z),\quad z\in\Hu,
\end{equation}
for some $c_0(\varepsilon),\dots,c_{n}(\varepsilon)\in\C$.  
Another application of Lemma \ref{coefficientlemma} gives that   
$\lvert c_k(\varepsilon)\rvert\leq C'C$ for $k=0,1,\dots,n$, 
where $C$ is as in \eqref{tempgrowthbound} and $C'=C'_{\alpha,n,R}$ 
is a positive constant. 
By a compactness argument we can extract a subsequence 
$\varepsilon=\varepsilon_j\to 0$ of positive real numbers such that  
$c_k(\varepsilon_j)\to c_k$ as $j\to\infty$ for $k=0,1,\dots,n$. 
The conclusion of the theorem now follows by setting 
$\varepsilon=\varepsilon_j$ in \eqref{alphaobstructionfcnregularization} 
and letting $j\to\infty$.
\end{proof}

We point out that the 
assumption in 
Theorem \ref{alphaobstructionclassdistributionalversion} is that 
$$
\lim_{y\to 0}\int_{-\infty}^\infty u(x+iy)\varphi(x)\, dx=0
$$ 
for every $\varphi \in C_0^\infty(\R)$. 

The case of usual harmonic functions deserves special mention.

\begin{cor}\label{harmonicobstructionclassdistributionalversion}
Let $u$ be a harmonic function in $\Hu$ which is 
of temperate growth at infinity. 
Assume that $\lim_{y\to0+} u(\cdot+iy)=0$ in $\De'(\R)$.
Then $u\in \mathcal{V}_0$. 
\end{cor}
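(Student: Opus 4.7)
The plan is to obtain this statement as an immediate specialization of Theorem \ref{alphaobstructionclassdistributionalversion} to the parameter value $\alpha = 0$. Recall from the introduction that $\Delta_{\Hu;0} = \partial\bar\partial$ is (up to a constant) the usual Laplacian, so a harmonic function in $\Hu$ in the classical sense coincides with a $0$-harmonic function in $\Hu$. Moreover, $\alpha = 0$ satisfies the standing restriction $\alpha > -1$ under which Theorem \ref{alphaobstructionclassdistributionalversion} is formulated.

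Given this identification, I would simply verify that the three hypotheses needed to apply Theorem \ref{alphaobstructionclassdistributionalversion} at $\alpha = 0$ are present: namely that $u$ is $0$-harmonic in $\Hu$, that $u$ has temperate growth at infinity, and that $\lim_{y\to 0+}u(\cdot+iy) = 0$ in $\De'(\R)$. Each of these is literally one of the hypotheses of the corollary. The conclusion $u\in\mathcal{V}_0$ then follows.

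Since there is no mathematical content beyond matching hypotheses, there is no genuine obstacle here; the corollary exists simply to record the classical $\alpha = 0$ case separately for emphasis and for convenient later reference. If one wished to avoid citing the distributional version and instead argue from Theorem \ref{alphaobstructionclass} directly, the only missing step would be to promote the distributional boundary vanishing to a pointwise boundary vanishing at every $x\in\R$; this is precisely what the mollification argument in the proof of Theorem \ref{alphaobstructionclassdistributionalversion} achieves by convolving $u(\cdot+iy)$ with an approximate identity $\psi_\varepsilon \in C_0^\infty(\R)$, extracting a diagonal subsequence, and using the finite-dimensional coefficient bound from Lemma \ref{coefficientlemma}. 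Specializing $\alpha = 0$ in that argument requires no modification.
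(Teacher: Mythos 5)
Your proposal is correct and matches the paper exactly: the corollary is stated without proof precisely because it is the immediate specialization of Theorem \ref{alphaobstructionclassdistributionalversion} to $\alpha=0$, using that harmonic functions are the $0$-harmonic functions and that $0>-1$. Nothing further is needed.
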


We record also the following 
slight improvement of the if part of Lemma \ref{coefficientlemma}.

\begin{prop}\label{Vanfromrelaxedgrowth}
Let $u\in \mathcal{V}_\alpha$ for some $\alpha>-1$. Let $n\in\N$. 
Assume that 
\begin{equation*}
u(z)=o((\lvert z\rvert^2/\im(z))^{n+\alpha+2})
\end{equation*}
as $\Hu\ni z\to\infty$ in the extended complex plane. 
Then $u\in \mathcal{V}_{\alpha,n}$. 
\end{prop}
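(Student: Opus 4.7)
The plan is to exploit the filtration $\mathcal{V}_\alpha=\cup_{m}\mathcal{V}_{\alpha,m}$ together with the precise asymptotic along rays provided by Lemma \ref{ulimitlemma}, and derive a contradiction from the sharpened growth condition if $u$ sits too high in the filtration.

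Since $u\in\mathcal{V}_\alpha$, there is a smallest $m\in\N$ with $u\in\mathcal{V}_{\alpha,m}$. The case $m=0$ is immediate (any $u\in\mathcal{V}_{\alpha,0}$ lies in $\mathcal{V}_{\alpha,n}$ for every $n\geq 0$ by the filtration), so assume $m\geq 1$ and write $u$ in the form \eqref{alphaobstructionfcn} with $c_m\neq 0$. The goal is to show that $m\leq n$; suppose for contradiction that $m\geq n+1$.

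By Lemma \ref{ulimitlemma}, for every $0<\theta<\pi$,
\begin{equation*}
\lim_{t\to+\infty}\frac{u(te^{i\theta})}{t^{m+\alpha+1}}=c_m\sin^{\alpha+1}(\theta)\,p_{m,\alpha}(e^{i\theta}).
\end{equation*}
The polynomial $p_{m,\alpha}$ has leading term $z^m$ and is therefore not identically zero, so the trigonometric polynomial $\theta\mapsto p_{m,\alpha}(e^{i\theta})$ is not identically zero on $(0,\pi)$; it vanishes at only finitely many points. Choose $\theta_0\in(0,\pi)$ with $p_{m,\alpha}(e^{i\theta_0})\neq 0$. Along the ray $z=te^{i\theta_0}$ one has $\lvert z\rvert^2/\im z=t/\sin\theta_0$, so the hypothesis gives
\begin{equation*}
u(te^{i\theta_0})=o\bigl((t/\sin\theta_0)^{n+\alpha+2}\bigr)=o(t^{n+\alpha+2}),\quad t\to+\infty.
\end{equation*}
Since $m+\alpha+1\geq n+\alpha+2$, dividing by $t^{m+\alpha+1}$ yields
\begin{equation*}
\frac{u(te^{i\theta_0})}{t^{m+\alpha+1}}=o(t^{n+1-m})\longrightarrow 0,
\end{equation*}
which contradicts the fact that the same quotient tends to the nonzero limit $c_m\sin^{\alpha+1}(\theta_0)\,p_{m,\alpha}(e^{i\theta_0})$. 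Therefore $m\leq n$ and $u\in\mathcal{V}_{\alpha,n}$.

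The only substantive ingredient beyond Lemma \ref{ulimitlemma} is the observation that the nonvanishing of the leading coefficient of $p_{m,\alpha}$ forces $p_{m,\alpha}(e^{i\theta})\neq 0$ on a dense subset of $(0,\pi)$; this is the only place where one has to check that the ``test ray'' can actually be chosen. Everything else is a direct asymptotic comparison between the power $t^{m+\alpha+1}$ coming from the top homogeneous term of $u$ and the sharpened growth bound $t^{n+\alpha+2}$ coming from the hypothesis.
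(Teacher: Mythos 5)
Your proof is correct and follows essentially the same route as the paper's: both arguments take the minimal $m$ with $u\in\mathcal{V}_{\alpha,m}$, pick $\theta$ with $p_{m,\alpha}(e^{i\theta})\neq0$, and use Lemma \ref{ulimitlemma} to see that $u$ grows like $t^{m+\alpha+1}$ along that ray, forcing $m<n+1$. Your write-up is somewhat more explicit (in particular about why a test ray with $p_{m,\alpha}(e^{i\theta_0})\neq0$ exists), but there is no substantive difference in method.
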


\begin{proof} 
If $u\in\mathcal{V}_{\alpha,0}$ there is nothing to prove. 
Assume therefore that $u\in\mathcal{V}_{\alpha,m}$ 
and $u\not\in\mathcal{V}_{\alpha,m-1}$ for some $m\in\Z^+$.
An application of Lemma \ref{ulimitlemma} 
with $0<\theta<\pi$ chosen such that $p_{m,\alpha}(e^{i\theta})\neq0$ 
shows that $u$ has order of growth at least $m+\alpha+1$ at infinity. 
Therefore $m< n+1$, so that $u\in\mathcal{V}_{\alpha,n}$.
\end{proof} 

Proposition \ref{Vanfromrelaxedgrowth} is included for 
the sake of convenience.
We shall prove more refined versions of 
Proposition \ref{Vanfromrelaxedgrowth} in later sections.

\section{A uniqueness result for the case \texorpdfstring{$\alpha\neq0$}{nonzero alpha}}
\label{sectionuniguenessalphaneq0}

We now turn our attention to uniqueness results 
for $\alpha$-harmonic functions in $\Hu$. 
In this section we study the case of parameters $\alpha>-1$ 
such that $\alpha\neq 0$. 
The special case of usual harmonic functions is investigated 
in later sections. 
We begin our analysis with a study of 
the zeros of the polynomials $p_{k,\alpha}$.

A classical result of Enestr\"om-Kakeya says that if 
\begin{equation}\label{analyticpolynomial}
p(z)=\sum_{k=0}^n a_kz^k
\end{equation}
is an analytic polynomial of degree $n\geq0$ such that 
$a_k\leq a_{k+1}$ for $0\leq k<n$, then the zeroes of $p$ 
are all located in the closed unit disc $\bar\D$. 
An easy proof of this result can be found in 
Gardner and Govil \cite[Theorem 1.3]{gardner2014enestrom}.

\begin{cor}\label{EKcor} 
Let $p$ be an analytic polynomial of the form \eqref{analyticpolynomial} 
with positive coefficients: $a_k>0$ for $0\leq k\leq n$. 
Set 
$$
r=\min_{0\leq k<n}a_k/a_{k+1}\quad  \text{and}
\quad  R=\max_{0\leq k<n}a_k/a_{k+1}.
$$ 
Then the zeroes of $p$ are all located in the annulus  
$\{z\in\C:\ r\leq\lvert z\rvert\leq R\}$.
\end{cor}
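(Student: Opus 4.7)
The plan is to deduce both the outer bound $\lvert z\rvert\leq R$ and the inner bound $\lvert z\rvert\geq r$ from the Eneström-Kakeya theorem cited just above, via suitable rescalings of $p$.

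For the outer bound, I would consider the rescaled polynomial
\[
q(w)=p(Rw)=\sum_{k=0}^{n}(a_k R^k)\,w^k.
\]
Writing $b_k=a_k R^k$, the definition of $R$ gives $b_k/b_{k+1}=(a_k/a_{k+1})/R\leq 1$ for $0\leq k<n$, so the coefficients of $q$ satisfy the monotonicity hypothesis of Eneström-Kakeya. Hence all zeros $w_0$ of $q$ lie in $\bar\D$, and since $p(z_0)=0$ iff $q(z_0/R)=0$, this translates into $\lvert z_0\rvert\leq R$.

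For the inner bound, I would combine a scaling with the reciprocal polynomial. Set $\tilde q(w)=p(rw)=\sum_k(a_kr^k)w^k$ with coefficients $\tilde b_k=a_k r^k$; by the definition of $r$, $\tilde b_k/\tilde b_{k+1}\geq 1$, so the $\tilde b_k$'s are non-increasing. Passing to the reciprocal polynomial
\[
\tilde q^{*}(w)=w^n\tilde q(1/w)=\sum_{j=0}^{n}\tilde b_{n-j}\,w^{j}
\]
reverses the order of coefficients, yielding a polynomial with non-decreasing positive coefficients. Eneström-Kakeya then places the zeros of $\tilde q^{*}$ in $\bar\D$. Since $a_0>0$ forces $p(0)\neq 0$, every zero $z_0$ of $p$ is nonzero, so $\tilde q(z_0/r)=0$ gives $\tilde q^{*}(r/z_0)=0$, whence $\lvert r/z_0\rvert\leq 1$, i.e.\ $\lvert z_0\rvert\geq r$.

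The argument is essentially a two-line reduction to the preceding theorem; the only thing to be a little careful with is the reciprocal-polynomial step, making sure the non-vanishing of $p$ at the origin (guaranteed by $a_0>0$) is used to justify the passage $\tilde q(z_0/r)=0\Leftrightarrow\tilde q^{*}(r/z_0)=0$. There is no substantive obstacle beyond this bookkeeping.
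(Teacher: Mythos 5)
Your proof is correct and follows essentially the same route as the paper: rescale by $R$ and apply Eneström--Kakeya for the outer bound, and pass to the reciprocal polynomial for the inner bound. The only cosmetic difference is that for the inner bound you scale by $r$ first and then reciprocate, whereas the paper reciprocates first and then reuses its outer-disc paragraph; your explicit remark that $a_0>0$ forces $p(0)\neq0$ is a welcome bit of bookkeeping that the paper leaves implicit.
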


\begin{proof}
We first show that the zeroes of $p$ are all located in the disc 
$\{z\in\C:\ \lvert z\rvert\leq R\}$.
Consider the polynomial $g(z)=p(Rz)$. By assumption the polynomial $g$ 
satisfies the assumptions of the Enestr\"om-Kakeya theorem quoted above. 
We conclude that the zeroes of $g$ are all located in $\bar\D$. 
This yields that the zeroes of $p$ are all located in the disc 
$\{z\in\C:\ \lvert z\rvert\leq R\}$. 

We next show that the zeroes of $p$ are all located in the exterior disc 
$\{z\in\C:\ \lvert z\rvert\geq r\}$. 
Consider the polynomial $h(z)=z^np(1/z)$. Notice that 
$$
h(z)=\sum_{k=0}^n a_{n-k}z^k.
$$
By the result of the previous paragraph, 
the zeroes of $h$ are all located in the disc 
$\{z\in\C:\ \lvert z\rvert\leq 1/r\}$.  
Therefore the zeroes of $p$ are all located in the exterior disc 
$\{z\in\C:\ \lvert z\rvert\geq r\}$.  
\end{proof}

Our interest in zeroes of polynomials stems from the following result.  

\begin{theorem}\label{pkakzerofree}
Let $\alpha>-1$ and $\alpha\neq0$. 
Let $p_{k,\alpha}$ be as in \eqref{eq:pkallealfa} for some $k\in\N$.
Then $p_{k,\alpha}(e^{i\theta})\neq0$ for $e^{i\theta}\in\T$.
\end{theorem}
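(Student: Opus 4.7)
The plan is to reduce the claim to a statement about zeros of an analytic polynomial in one variable, and then apply Corollary \ref{EKcor} (the Enestr\"om--Kakeya consequence stated earlier). By \eqref{eq:pkallealfa} we have $p_{k,\alpha}(z)=z^k s_{k,\alpha}(\bar z/z)$, so on the unit circle
\[
p_{k,\alpha}(e^{i\theta})=e^{ik\theta}\,s_{k,\alpha}(e^{-2i\theta}).
\]
Thus it suffices to show that the analytic polynomial $s_{k,\alpha}(w)=\sum_{j=0}^k a_j w^j$, with $a_j=(\alpha+1)_j/j!$, has no zero on the unit circle $\{\lvert w\rvert=1\}$. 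The case $k=0$ is trivial since $s_{0,\alpha}=1$, so assume $k\geq 1$.

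Since $\alpha>-1$, the coefficients $a_j$ are strictly positive, so Corollary \ref{EKcor} applies. I would compute the consecutive ratios
\[
\frac{a_j}{a_{j+1}}=\frac{j+1}{\alpha+j+1},\qquad 0\leq j<k,
\]
and observe that as a function of $j$ this ratio has derivative $\alpha/(\alpha+j+1)^2$, so its monotonicity is governed by the sign of $\alpha$. Because $\alpha\neq 0$, the ratio is strictly monotonic and in particular never equals $1$ unless $\alpha=0$.

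I would then split into the two subcases. If $\alpha>0$, the ratio is strictly increasing in $j$, every term satisfies $(j+1)/(\alpha+j+1)<1$, and the maximum over $0\le j<k$ equals $R=k/(\alpha+k)<1$; Corollary \ref{EKcor} then confines all zeros of $s_{k,\alpha}$ to $\{\lvert w\rvert\leq R\}\subset\D$. If $-1<\alpha<0$, the ratio is strictly decreasing in $j$, every term exceeds $1$, and the minimum equals $r=k/(\alpha+k)>1$; Corollary \ref{EKcor} then places all zeros of $s_{k,\alpha}$ in $\{\lvert w\rvert\geq r\}\subset\C\setminus\overline{\D}$. In either case, $s_{k,\alpha}$ has no zero on the unit circle, which is exactly what was needed.

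There is no real obstacle here beyond recognising the right substitution; the work has been front-loaded into the statement of Corollary \ref{EKcor}. The one point that deserves attention in the write-up is to emphasise that the hypothesis $\alpha\neq 0$ enters precisely through the strict inequality in the monotonic ratio, since for $\alpha=0$ all the ratios equal $1$ and the Enestr\"om--Kakeya annulus collapses to the unit circle itself; this is consistent with the fact that $s_{k,0}(w)=\sum_{j=0}^k w^j$ does vanish at the non-trivial $(k+1)$-st roots of unity, matching the dichotomy highlighted in the introduction.
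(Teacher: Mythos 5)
Your proof is correct and follows essentially the same route as the paper: reduce via \eqref{eq:pkallealfa} to the analytic polynomial $s_{k,\alpha}$ on the unit circle, compute the consecutive coefficient ratios $(j+1)/(\alpha+j+1)$, and apply Corollary \ref{EKcor} in the two cases $\alpha>0$ and $-1<\alpha<0$. (Incidentally, in the case $-1<\alpha<0$ your bound $r=k/(\alpha+k)$ is the correct minimum of the decreasing sequence of ratios, whereas the paper quotes $1/(\alpha+1)$, which is the value at $j=0$; both exceed $1$, so the conclusion is unaffected.)
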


\begin{proof}
By \eqref{eq:pkallealfa} it suffices to show that 
$s_{k,\alpha}(e^{i\theta})\neq0$ for $e^{i\theta}\in\T$, 
where $s_{k,\alpha}$ is as in \eqref{eq:skallealfa}.
Set $a_{j}(\alpha)=\frac{1}{j!}(\alpha+1)_j$. 
Observe that 
\begin{equation}\label{coefficientquotient}
\frac{a_{j}(\alpha)}{a_{j+1}(\alpha)}=\frac{j+1}{\alpha+j+1}
=1-\frac{\alpha}{\alpha+j+1}
\end{equation}
for $0\leq j<k$.

Assume next that $\alpha>0$. In this case the quotients in 
\eqref{coefficientquotient} increase in $j$ and we have that 
$a_{j}(\alpha)/a_{j+1}(\alpha)\leq k/(\alpha+k)$
for $0\leq j<k$. By Corollary \ref{EKcor} the 
zeroes of $s_{k,\alpha}$ are all located 
in the disc $\{z\in\C:\ \lvert z\rvert\leq k/(\alpha+k)\}$ which 
is compactly contained in $\D$.

Assume next that $-1<\alpha<0$. In this case the quotients in 
\eqref{coefficientquotient} decrease in $j$ and we have that 
$a_{j}(\alpha)/a_{j+1}(\alpha)\geq 1/(\alpha+1)$
for $0\leq j<k$. By Corollary \ref{EKcor} the 
zeroes of $s_{k,\alpha}$ are all located 
in the exterior disc $\{z\in\C:\ \lvert z\rvert\geq 1/(\alpha+1)\}$ which 
is disjoint from $\bar\D$.
\end{proof}

\begin{remark}
We point out that the zero set of $p_{k,0}$ intersects the unit circle 
if $k\geq1$. In fact, from \eqref{eq:skallealfa} we have that 
$$
s_{k,0}(z)=\sum_{j=0}^kz^j=(1-z^{k+1})/(1-z)
$$   
for $k\in\N$.
\end{remark}

We now consider the class $\mathcal{V}_{\alpha}$. 

\begin{theorem}\label{Vauniquenessaneq0}
Let $u\in\mathcal{V}_{\alpha}$ for some $\alpha>-1$ with $\alpha\ne0$.
Assume that there is a sequence $\{z_j\}$ in $\Hu$ 
with $\lvert z_j\rvert \to\infty$ as $j\to\infty$ such that 
\begin{equation}\label{sequencevanishing}
\lim_{j\to\infty} \frac{u(z_j)}{(\im(z_j))^{\alpha+1}}=0.
\end{equation} 
Then $u(z)=0$ for all $z\in\Hu$.
\end{theorem}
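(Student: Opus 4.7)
The plan is to exploit the explicit formula $u(z)=\sum_{k=0}^n c_k(\im z)^{\alpha+1}p_{k,\alpha}(z)$ provided by the definition of $\mathcal{V}_{\alpha}$, and reduce the hypothesis
$$
\lim_{j\to\infty}\frac{u(z_j)}{(\im(z_j))^{\alpha+1}}=0
$$
to a statement about the polynomial $\sum_{k=0}^n c_k p_{k,\alpha}(z_j)$. Writing $z_j=t_je^{i\theta_j}$ with $t_j=\lvert z_j\rvert\to+\infty$ and $0<\theta_j<\pi$, the homogeneity of $p_{k,\alpha}$ gives
$$
\frac{u(z_j)}{(\im(z_j))^{\alpha+1}}=\sum_{k=0}^n c_k\, t_j^k\, p_{k,\alpha}(e^{i\theta_j}).
$$
I would argue by contradiction: assume $u\not\equiv 0$ and let $n$ be the largest index for which $c_n\neq 0$.

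The key input is Theorem \ref{pkakzerofree}, which provides, precisely because $\alpha\neq 0$, that $p_{n,\alpha}$ has no zeros on the unit circle $\T$. Combined with the compactness of $\T$ and the continuity of $p_{k,\alpha}$ on $\T$, this yields positive constants
$$
m=\min_{e^{i\theta}\in\T}\lvert p_{n,\alpha}(e^{i\theta})\rvert>0,\qquad M_k=\max_{e^{i\theta}\in\T}\lvert p_{k,\alpha}(e^{i\theta})\rvert<\infty,
$$
so that the leading coefficient term satisfies $\lvert c_n p_{n,\alpha}(e^{i\theta_j})\rvert\geq\lvert c_n\rvert m$ uniformly in $j$, while for $k<n$ one has $\lvert c_k t_j^{k-n}p_{k,\alpha}(e^{i\theta_j})\rvert\leq \lvert c_k\rvert M_k t_j^{k-n}\to 0$ as $j\to\infty$. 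Factoring out $t_j^n$ from the displayed sum, the bracketed expression therefore stays bounded below in modulus by $\lvert c_n\rvert m/2$ for all sufficiently large $j$.

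Combining these estimates gives
$$
\left\lvert\frac{u(z_j)}{(\im(z_j))^{\alpha+1}}\right\rvert\geq \frac{\lvert c_n\rvert m}{2}\, t_j^n
$$
for large $j$. When $n\geq 1$ the right hand side tends to $+\infty$, contradicting the hypothesis; when $n=0$, it gives the contradiction $\lvert c_0\rvert\leq 0$ directly. Hence no such maximal $n$ exists, meaning $c_k=0$ for every $k$, and therefore $u\equiv 0$ in $\Hu$. The only real content is the lower bound $m>0$, which is exactly why Theorem \ref{pkakzerofree} (and through it the Enestr\"om-Kakeya-type Corollary \ref{EKcor}) is invoked, and the reason the argument fails in the classical case $\alpha=0$ where $p_{k,0}$ does vanish on $\T$.
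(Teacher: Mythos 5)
Your proof is correct and follows essentially the same route as the paper's: both rest on the homogeneity of $p_{k,\alpha}$ together with Theorem \ref{pkakzerofree} (zero-freeness on $\T$, hence a positive lower bound for $\lvert p_{n,\alpha}(z)\rvert/\lvert z\rvert^n$), and both isolate the top coefficient $c_n$ from the limit \eqref{sequencevanishing}. The paper extracts $c_n=0$ by dividing through by $p_{n,\alpha}(z_j)$ and iterating downward, while you phrase it as a contradiction with the maximal nonzero coefficient; the difference is purely organizational.
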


\begin{proof}
We put $u\in\mathcal{V}_{\alpha}$ on the form \eqref{alphaobstructionfcn} 
for some $n\in\N$ and constants $c_0,\dots,c_n\in\C$, 
where the $p_{k,\alpha}$'s are as in \eqref{eq:pkallealfa}.
Recall that the polynomial $p_{k,\alpha}$ is homogeneous of degree $k\in\N$. 
By Theorem \ref{pkakzerofree}  the polynomial $p_{k,\alpha}$ 
has no zeroes on the unit circle. 
From these two properties of $p_{k,\alpha}$ we have that the quotient 
$\lvert p_{k,\alpha}(z)\rvert/\lvert z\rvert^k$ 
is bounded from above and below by finite positive constants  
uniformly in the punctured plane.  
From \eqref{sequencevanishing}  we now have that 
$$
0=\lim_{j\to\infty} \frac{u(z_j)}{p_{n,\alpha}(z_j)(\im(z_j))^{\alpha+1}}=c_n.
$$
Repeating the argument we conclude that $c_k=0$ for $0\leq k\leq n$.  
Therefore $u(z)=0$ for all $z\in\Hu$.
\end{proof}

We emphasize that condition \eqref{sequencevanishing} can be checked 
along any sequence $\{z_j\}$ in $\Hu$ such that 
$z_j\to\infty$ in the extended complex plane $\C_\infty$ as $j\to\infty$.

We now turn to uniqueness theorems for $\alpha$-harmonic functions 
with $\alpha\neq0$. 
We first prove Theorem \ref{introuniquenessthm} stated in the introduction. 

\begin{proof}[Proof of Theorem \ref{introuniquenessthm}]
By assumption \eqref{classicalvanishingrealline} 
(formula \eqref{vanishingDirichletbdrycondition}) 
we can apply  
Theorem \ref{alphaobstructionclass} to conclude that 
$u\in\mathcal{V}_\alpha$.
We next apply Theorem \ref{Vauniquenessaneq0}
to conclude that $u(z)=0$ for $z\in\Hu$.
\end{proof}

We next extend the validity of Theorem \ref{introuniquenessthm} 
to allow for a distributional boundary value instead 
of \eqref{vanishingDirichletbdrycondition}.

\begin{theorem}\label{thm:distributionaluniquenessaneq0}
Let $\alpha>-1$ and $\alpha\ne0$.
Let $u$ be an $\alpha$-harmonic function in $\Hu$ which is 
of temperate growth at infinity. 
\begin{enumerate}
\item[(1)']
Assume that 
$\lim_{y\to0+} u(\cdot+iy)=0$ in $\De'(\R)$.
\item[(2)] 
Assume that there is a sequence $\{z_j\}$ in $\Hu$ 
with $z_j\to\infty$ in $\C_\infty$ as $j\to\infty$ such that 
\eqref{sequencevanishing} holds.
\end{enumerate} 
Then $u(z)=0$ for all $z\in\Hu$.
\end{theorem}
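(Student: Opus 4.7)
The plan is to follow the same two-step strategy used in the proof of Theorem \ref{introuniquenessthm}, but replace the first step with its distributional counterpart already developed in Section \ref{representationthms}. The entire effort therefore reduces to invoking two results that have already been established.

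First I would use hypothesis (1)' to place $u$ in the obstruction class $\mathcal{V}_\alpha$. This is precisely the content of Theorem \ref{alphaobstructionclassdistributionalversion}: an $\alpha$-harmonic function in $\Hu$ that is of temperate growth at infinity and whose boundary value vanishes in $\De'(\R)$ must be a finite linear combination of the model functions $(\im z)^{\alpha+1}p_{k,\alpha}(z)$. Note that the assumption of temperate growth at infinity, which is part of the present hypotheses, is exactly what is needed to feed into that theorem. So after this first step we may write
$$
u(z)=\sum_{k=0}^n c_k(\im z)^{\alpha+1}p_{k,\alpha}(z),\quad z\in\Hu,
$$
for some $n\in\N$ and $c_0,\dots,c_n\in\C$.

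Second, with $u$ now known to lie in $\mathcal{V}_\alpha$, hypothesis (2) is precisely the input required by Theorem \ref{Vauniquenessaneq0}. Since $\alpha\neq0$, that theorem (whose proof rests on the zero-freeness of $p_{k,\alpha}$ on $\T$ supplied by Theorem \ref{pkakzerofree}) forces all coefficients $c_k$ to vanish, and hence $u\equiv0$ in $\Hu$.

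There is no genuine obstacle beyond assembling the pieces; the only thing to double-check is that the switch from pointwise vanishing \eqref{vanishingDirichletbdrycondition} to distributional vanishing $\lim_{y\to0+}u(\cdot+iy)=0$ in $\De'(\R)$ is absorbed cleanly by Theorem \ref{alphaobstructionclassdistributionalversion}, which it is by design. The rest of the argument is identical to the proof of Theorem \ref{introuniquenessthm}, so the proof can be written in just a few lines: apply Theorem \ref{alphaobstructionclassdistributionalversion} under assumption (1)', then apply Theorem \ref{Vauniquenessaneq0} under assumption (2).
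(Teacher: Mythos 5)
Your proposal is correct and follows exactly the same two-step argument as the paper: apply Theorem \ref{alphaobstructionclassdistributionalversion} under hypothesis (1)' to conclude $u\in\mathcal{V}_\alpha$, then apply Theorem \ref{Vauniquenessaneq0} under hypothesis (2) to conclude $u\equiv0$. Nothing is missing.
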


\begin{proof}
In view of assumption \eqref{classicalvanishingrealline}' we can apply  
Theorem \ref{alphaobstructionclassdistributionalversion} to conclude that 
$u\in\mathcal{V}_\alpha$.   
We next apply Theorem \ref{Vauniquenessaneq0}
to conclude that $u(z)=0$ for $z\in\Hu$.
\end{proof}

We close this section with a relaxed version of 
Theorem \ref{Vauniquenessaneq0}.

\begin{theorem}\label{Vancharacterizationaneq0}
Let $u\in\mathcal{V}_{\alpha}$ for some $\alpha>-1$ with $\alpha\neq 0$.
Let $n\in\N$. Assume that there is a sequence $\{z_j\}$ in $\Hu$ 
with $z_j\to\infty$ in $\C_\infty$ as $j\to\infty$ such that 
\begin{equation*}
\lim_{j\to\infty} \frac{u(z_j)}{\lvert z_j\rvert^{n+1}(\im(z_j))^{\alpha+1}}=0.
\end{equation*} 
Then $u\in\mathcal{V}_{\alpha,n}$.
\end{theorem}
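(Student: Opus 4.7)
The plan is to mimic the proof of Theorem \ref{Vauniquenessaneq0}, extracting the leading-order coefficient along the sequence $\{z_j\}$ by homogeneity of the polynomials $p_{k,\alpha}$ and using Theorem \ref{pkakzerofree}, but now only peeling off coefficients of degree strictly greater than $n$.

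Concretely, I would start by writing $u \in \mathcal{V}_\alpha$ in the form \eqref{alphaobstructionfcn}, say
$$u(z) = \sum_{k=0}^{m} c_k (\im z)^{\alpha+1} p_{k,\alpha}(z), \quad z \in \Hu,$$
and, assuming $u \not\equiv 0$ (otherwise the conclusion is trivial), choose $m$ as the largest index with $c_m \neq 0$. The goal is to show $m \leq n$, so suppose for contradiction that $m \geq n+1$. Write $z_j = t_j e^{i\theta_j}$ with $t_j = \lvert z_j\rvert$ and $\theta_j \in (0,\pi)$; since $z_j \to \infty$ in $\C_\infty$, we have $t_j \to \infty$. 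Passing to a subsequence, I may assume $\theta_j \to \theta^\ast \in [0,\pi]$.

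The key calculation uses homogeneity $p_{k,\alpha}(z_j) = t_j^k p_{k,\alpha}(e^{i\theta_j})$ together with $\im z_j = t_j \sin\theta_j$, which gives
$$\frac{u(z_j)}{t_j^m (\im z_j)^{\alpha+1}} = \sum_{k=0}^m c_k t_j^{k-m} p_{k,\alpha}(e^{i\theta_j}) \longrightarrow c_m p_{m,\alpha}(e^{i\theta^\ast})$$
as $j \to \infty$. By Theorem \ref{pkakzerofree} combined with continuity and compactness of $\T$, we have $\lvert p_{m,\alpha}(e^{i\theta})\rvert \geq \delta > 0$ uniformly on $\T$, so the right-hand side is nonzero. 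On the other hand,
$$\frac{u(z_j)}{t_j^m (\im z_j)^{\alpha+1}} = \frac{u(z_j)}{\lvert z_j\rvert^{n+1} (\im z_j)^{\alpha+1}} \cdot t_j^{\,n+1-m},$$
and because $m \geq n+1$, the factor $t_j^{\,n+1-m}$ is bounded by $1$ for $j$ large. The hypothesis then forces the left-hand side to tend to $0$, contradicting $c_m p_{m,\alpha}(e^{i\theta^\ast}) \neq 0$.

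Thus $m \leq n$, which is precisely $u \in \mathcal{V}_{\alpha,n}$. The only substantive ingredient beyond bookkeeping is the uniform lower bound $\lvert p_{m,\alpha}(e^{i\theta})\rvert \geq \delta > 0$ on $\T$; this is where the hypothesis $\alpha \neq 0$ is essential, since it is exactly Theorem \ref{pkakzerofree} that fails for $\alpha = 0$. The only mild technical point is that the angles $\theta_j$ need not converge, but passing to a subsequence (and allowing $\theta^\ast \in \{0,\pi\}$, which is harmless thanks to non-vanishing of $p_{m,\alpha}$ on the full unit circle) removes this obstacle.
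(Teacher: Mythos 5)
Your proof is correct and follows essentially the same route the paper intends (the paper omits the details, stating only that the argument parallels that of Theorem \ref{Vauniquenessaneq0}): homogeneity of $p_{k,\alpha}$ plus the non-vanishing on $\T$ from Theorem \ref{pkakzerofree} isolates the top coefficient, and the extra factor $\lvert z_j\rvert^{n+1}$ in the hypothesis only permits killing coefficients of index $\geq n+1$. Your passage to a convergent subsequence of angles is a cosmetic variant of the paper's use of uniform two-sided bounds on $\lvert p_{k,\alpha}(z)\rvert/\lvert z\rvert^{k}$.
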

 
The proof of Theorem \ref{Vancharacterizationaneq0} follows along 
the same lines as the proof of Theorem \ref{Vauniquenessaneq0}. 
We omit the details.

\section{Harmonic functions vanishing along geodesics}

We shall now turn to uniqueness results for classical harmonic 
functions in $\Hu$ ($\alpha=0$). 
Recall the harmonic polynomials 
$$
u_{k,0}(z)=\im(z^{k+1}),\quad z\in\C,
$$
for $k\in\N$ which all vanish on the real line $\R$. 
Observe that the zero set of $u_{k,0}$ is a union of $k+1$ 
lines passing through the origin. 
A treatment of the uniqueness problem for classical harmonic 
functions along the lines of what we did in 
Section \ref{sectionuniguenessalphaneq0} 
thus calls for a more demanding vanishing condition at infinity.  
In this section we shall consider vanishing conditions 
along geodesics in $\Hu$, that is, along 
rays in $\Hu$ that are parallel to the positive imaginary axis. 

Let us first consider the class $\mathcal{V}_{0}$.  

\begin{theorem}\label{V0uniquenessgeodesic}
Let $u\in\mathcal{V}_{0}$.  
Assume that 
\begin{equation}\label{geodesicvanishing}
\lim_{y\to+\infty}u(x+iy)/y=0
\end{equation}
for $x=x_j\in\R$ ($j=1,2$) with $x_1\neq x_2$. 
Then $u(z)=0$ for all $z\in\Hu$.
\end{theorem}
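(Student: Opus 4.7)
The plan is to leverage Proposition~\ref{harmonicobstructionclass}, which gives every $u\in\mathcal{V}_{0}$ the explicit form $u(z)=\sum_{k=1}^{N}c_{k}\im(z^{k})$ for some $N\in\Z^{+}$ and $c_{1},\dots,c_{N}\in\C$. Setting $f(z)=\sum_{k=1}^{N}c_{k}z^{k}$, an analytic polynomial with $f(0)=0$, one has the identity
\[
u(x+iy)=\frac{f(x+iy)-f(x-iy)}{2i},\qquad x,y\in\R,
\]
and the strategy is to turn the two geodesic vanishing hypotheses into functional equations for $f$.

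First, I would fix $x=x_{j}$ ($j=1,2$) and regard $P_{j}(y):=u(x_{j}+iy)$ as a polynomial in the real variable $y$ with complex coefficients. The hypothesis $P_{j}(y)/y\to 0$ as $y\to+\infty$ applied to a polynomial forces every coefficient of $y^{k}$ with $k\geq 1$ to vanish, so $P_{j}$ is constant; evaluating at $y=0$ gives $P_{j}(0)=u(x_{j})=\sum_{k}c_{k}\im(x_{j}^{k})=0$, so $P_{j}\equiv 0$. Hence $f(x_{j}+iy)=f(x_{j}-iy)$ for every real $y$, and since both sides are polynomials in $y$ agreeing on an infinite set, the identity extends to all $y\in\C$; substituting $w=iy$ then yields the symmetry $f(z)=f(2x_{j}-z)$ in $\C[z]$.

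Next, I would compose the two symmetries. From $f(z)=f(2x_{1}-z)$ and $f(z)=f(2x_{2}-z)$ it follows that
\[
f(z)=f(2x_{1}-z)=f\bigl(2x_{2}-(2x_{1}-z)\bigr)=f\bigl(z+2(x_{2}-x_{1})\bigr),
\]
so $f$ has the nonzero period $2(x_{2}-x_{1})$. A polynomial can be periodic only if it is constant, hence $f$ is constant; combined with $f(0)=0$ this forces $f\equiv 0$, so every $c_{k}$ vanishes and $u\equiv 0$.

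The only delicate step is extracting the polynomial symmetry $f(z)=f(2x_{j}-z)$ from the growth hypothesis; once this has been carried out, the remaining periodicity argument is elementary. The necessity of two distinct base points is visible from the structure of the proof: a single symmetry axis $x=x_{1}$ merely forces $f$ to be an even polynomial in $z-x_{1}$, which is consistent with many nonzero choices (for instance $u(z)=\im(z^{2})=2xy$ satisfies the hypothesis at $x_{1}=0$ but fails it at every $x_{2}\neq 0$), whereas two distinct axes produce a genuine nonzero period and thus triviality.
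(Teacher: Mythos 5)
Your proof is correct, and it takes a genuinely different route from the paper's. The paper writes $u=\sum_{k=1}^{n+1}c_k\im(z^k)$ with $n$ odd and computes the binomial asymptotics $\im((x+iy)^{n+1})=(n+1)(-1)^{(n-1)/2}xy^{n}+O(y^{n-1})$ and $\im((x+iy)^{n})=(-1)^{(n-1)/2}y^{n}+O(y^{n-1})$; the hypothesis then forces the affine function $x\mapsto c_{n+1}(n+1)x+c_n$ to vanish at the two distinct points $x_1,x_2$, killing the top two coefficients, after which one iterates downward. You instead encode $u$ through the holomorphic generator $f(z)=\sum_k c_kz^k$ via $u(x+iy)=\bigl(f(x+iy)-f(x-iy)\bigr)/2i$, upgrade each geodesic hypothesis to the polynomial reflection symmetry $f(z)=f(2x_j-z)$ (using that a polynomial that is $o(y)$ along a ray is constant, and that the constant term vanishes), and compose the two reflections into the nonzero period $2(x_2-x_1)$, which no nonconstant polynomial admits. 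All the individual steps check out: the identity for $u$ holds for complex $c_k$ since $\im(z^k)=(z^k-\bar z^k)/2i$, the extension of $f(x_j+iy)=f(x_j-iy)$ from real to complex $y$ is legitimate polynomial identity reasoning, and a periodic polynomial is constant because $f-f(0)$ acquires infinitely many zeros. Your argument buys a global, iteration-free proof and a conceptual explanation of why two geodesics are needed (one reflection fixes every polynomial even about $x_1$, while two distinct reflections generate a translation group); the paper's coefficient-peeling computation is less transparent but aligns more directly with the growth-order bookkeeping used elsewhere (e.g.\ Lemma \ref{coefficientlemma} and Theorem \ref{Vancharacterizationaneq0}) and with the weighted case $\alpha\neq0$, where no holomorphic generator is available.
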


\begin{proof}  
We put $u\in\mathcal{V}_{0}$ on the form \eqref{harmonicobstructionfcn} 
for some $c_1,\dots, c_{n+1}\in\C$ and $n\in\N$. 
By adding an extra term in \eqref{harmonicobstructionfcn} 
if necessary, we can arrange that $n\geq1$ is odd. 
We shall prove that $c_n=c_{n+1}=0$. 
This will complete the proof of the theorem.

We now proceed to details. Let $x\in\R$. 
From the binomial theorem we have that 
\begin{align*}
(x+iy)^{n+1}&= (iy)^{n+1}+(n+1)x(iy)^{n} +O(y^{n-1})\\
&=(-1)^{(n+1)/2}y^{n+1}+i(n+1)(-1)^{(n-1)/2}xy^{n}+O(y^{n-1})
\end{align*} 
as $y\to +\infty$. Passing to the imaginary part we have that 
\begin{equation}\label{(n+1)thtermasymptotics}
\im((x+iy)^{n+1})=(n+1)(-1)^{(n-1)/2}xy^{n}+O(y^{n-1})
\end{equation}
as $y\to +\infty$. A similar consideration shows that 
\begin{equation}\label{nthtermasymptotics}
\im((x+iy)^{n})=(-1)^{(n-1)/2}y^{n}+O(y^{n-1})
\end{equation}
as $y\to +\infty$. 

We now return to the function $u$. From \eqref{(n+1)thtermasymptotics} 
and \eqref{nthtermasymptotics} we have that 
$$
\lim_{y\to+\infty}u(x+iy)/y^{n}=c_{n+1} (n+1)(-1)^{(n-1)/2}x+c_{n}(-1)^{(n-1)/2}
$$ 
for $x\in\R$. From \eqref{geodesicvanishing} we have that   
$$
c_{n+1} (n+1)(-1)^{(n-1)/2}x+c_{n}(-1)^{(n-1)/2}=0
$$
for $x=x_j$ ($j=1,2$). Since $x_1\neq x_2$, we conclude 
that $c_{n+1}=c_{n}=0$.  
\end{proof}

We next turn to uniqueness theorems for harmonic functions.

\begin{theorem}\label{classicaluniquenessthm2geodesics}  
Let $u$ be a harmonic function in the open upper half-plane $\Hu$ 
which is of temperate growth at infinity. 
\begin{enumerate}
\item \label{classicalvanishingonrealboundary}
Assume that $u(z)\to 0$ as $\Hu\ni z\to x$ for every $x\in\R$.
\item\label{vanishingalonggeodesics}
Assume that 
$$
\lim_{y\to+\infty}u(x+iy)/y=0
$$ 
for $x=x_j\in\R$ ($j=1,2$) with $x_1\neq x_2$. 
\end{enumerate} 
Then $u(z)=0$ for all $z\in\Hu$.
\end{theorem}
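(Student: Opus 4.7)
The plan is to simply combine two results already established in the paper. Since $u$ is harmonic in $\Hu$ (which is the case $\alpha=0$ of $\alpha$-harmonicity) and of temperate growth at infinity, and since hypothesis (1) is exactly the classical (non-distributional) vanishing Dirichlet boundary condition $u(z)\to 0$ as $\Hu\ni z\to x$ for every $x\in\R$, we are in a position to invoke Theorem~\ref{alphaobstructionclass} with $\alpha=0$. That theorem yields the representation $u\in\mathcal{V}_0$; equivalently, by Proposition~\ref{harmonicobstructionclass}, $u$ is a finite linear combination of the harmonic polynomials $\im(z^k)$ for $k\geq 1$.

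With $u\in\mathcal{V}_0$ now in hand, hypothesis (2) is precisely the geodesic vanishing condition \eqref{geodesicvanishing} in Theorem~\ref{V0uniquenessgeodesic}, assumed at two distinct real points $x_1\neq x_2$. Applying that theorem immediately gives $u(z)=0$ for all $z\in\Hu$, which is the desired conclusion.

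So the proof is a two-line deduction: step one invokes Theorem~\ref{alphaobstructionclass} to reduce the problem from the (infinite-dimensional) class of harmonic functions of temperate growth vanishing on $\R$ to the finite-dimensional class $\mathcal{V}_0$ of explicit polynomial-type obstruction functions; step two invokes Theorem~\ref{V0uniquenessgeodesic} to eliminate all elements of $\mathcal{V}_0$ using vanishing along the two geodesics. There is no real obstacle here, since all of the technical work has been done in the preceding sections (the characterization of $\mathcal{V}_\alpha$ via boundary values and temperate growth, and the two-geodesic uniqueness result within $\mathcal{V}_0$). The theorem is effectively a packaging of those two ingredients into a statement purely about harmonic functions, without reference to the auxiliary class $\mathcal{V}_0$.
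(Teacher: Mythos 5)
Your proposal is correct and coincides with the paper's own proof: the paper likewise invokes Theorem \ref{alphaobstructionclass} to conclude $u\in\mathcal{V}_0$ and then applies Theorem \ref{V0uniquenessgeodesic} to finish. Nothing further is needed.
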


\begin{proof}  
From Theorem \ref{alphaobstructionclass} we have that $u\in\mathcal{V}_0$. 
An application of Theorem \ref{V0uniquenessgeodesic} yields that 
$u(z)=0$ for $z\in\Hu$.
\end{proof}

Recall the definition of $u_{k,\alpha}$ in \eqref{ukafunction} and let $a\in\R$. We point out that the function 
$$
u(z)=u_{1,0}(z-a)=\im((z-a)^2)=2(x-a)y,\quad z=x+iy\in\C,
$$
is of quadratic growth, harmonic, and vanishes on the lines $x=a$ and $y=0$. 
Thus, the vanishing assumption \eqref{vanishingalonggeodesics}  
in Theorem \ref{classicaluniquenessthm2geodesics}  
can not be relaxed to vanishing along one geodesic $x=a$ only.

We next extend the validity of 
Theorem \ref{classicaluniquenessthm2geodesics}  
to allow for a distributional boundary value in 
\eqref{classicalvanishingonrealboundary}.

\begin{theorem}\label{distributionaluniquenessthm2geodesics}  
Let $u$ be a harmonic function in the open upper half-plane $\Hu$ 
which is of temperate growth at infinity. 
\begin{enumerate}
\item[(1)'] 
Assume that $\lim_{y\to0+} u(\cdot+iy)=0$ in $\De'(\R)$.
\item[(2)]
Assume that 
$$
\lim_{y\to+\infty}u(x+iy)/y=0
$$ 
for $x=x_j\in\R$ ($j=1,2$) with $x_1\neq x_2$. 
\end{enumerate} 
Then $u(z)=0$ for all $z\in\Hu$.
\end{theorem}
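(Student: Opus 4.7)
The plan is to mirror the proof of Theorem \ref{classicaluniquenessthm2geodesics}, substituting the distributional boundary input for the pointwise one. The work is already packaged into two building blocks in the excerpt, so the proof reduces to invoking them in sequence.

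First, I would use hypothesis (1)$'$, together with the assumption that $u$ is harmonic (i.e.\ $\alpha$-harmonic with $\alpha=0$) and of temperate growth at infinity, to apply Theorem \ref{alphaobstructionclassdistributionalversion} with $\alpha=0$. This yields $u\in\mathcal{V}_0$. Note that Theorem \ref{alphaobstructionclassdistributionalversion} was proved precisely to upgrade the classical vanishing boundary condition \eqref{vanishingDirichletbdrycondition} used in Theorem \ref{alphaobstructionclass} to the weaker distributional one $\lim_{y\to 0+}u(\cdot+iy)=0$ in $\De'(\R)$; since $\alpha=0>-1$, the hypothesis $\alpha>-1$ of that theorem is satisfied.

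Next, with $u\in\mathcal{V}_0$ in hand, hypothesis (2) furnishes exactly the vanishing condition \eqref{geodesicvanishing} along two distinct geodesics $x=x_1$ and $x=x_2$ required by Theorem \ref{V0uniquenessgeodesic}. Applying that theorem concludes $u(z)=0$ for all $z\in\Hu$.

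There is no real obstacle here: both ingredients have been developed in the preceding sections, and the proof is simply a concatenation paralleling that of Theorem \ref{classicaluniquenessthm2geodesics}, the only difference being that the classical pointwise-on-$\R$ input is replaced by its distributional counterpart, which is handled by Theorem \ref{alphaobstructionclassdistributionalversion} in place of Theorem \ref{alphaobstructionclass}. If anything warrants a word of care, it is verifying that the hypotheses of Theorem \ref{alphaobstructionclassdistributionalversion} apply verbatim with $\alpha=0$, which they do, so the proof is essentially two lines.
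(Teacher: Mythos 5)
Your proposal is correct and coincides with the paper's own proof: both apply Theorem \ref{alphaobstructionclassdistributionalversion} (with $\alpha=0$) to obtain $u\in\mathcal{V}_0$ from the distributional boundary hypothesis, and then conclude via Theorem \ref{V0uniquenessgeodesic}. No further comment is needed.
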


\begin{proof}  
From Theorem \ref{alphaobstructionclassdistributionalversion}  
we have that $u\in\mathcal{V}_0$. 
An application of Theorem \ref{V0uniquenessgeodesic} yields that 
$u(z)=0$ for $z\in\Hu$.
\end{proof}

The results of this section together with corresponding results 
from Section \ref{sectionuniguenessalphaneq0} yield 
considerable advancement from a recent 
result by Carlsson and Wittsten \cite[Corollary 1.9
]{carlsson2016dirichlet}. 
In fact, instead of \eqref{sequencevanishing} 
or \eqref{geodesicvanishing} 
those authors considered functions such that 
$$
\lim_{y\to\infty}u(x+iy)/y^{\alpha+1}=0
$$   
for all $\lvert x\rvert<\delta$, where $\delta>0$.

The importance of condition \eqref{geodesicvanishing} 
for two distinct geodesics brings to mind an analogous situation 
in the study non-tangential limits. 
If $u$ is a bounded harmonic function in $\Hu$ such that 
$$
\lim_{r\to0}u(re^{i\theta_1})=L=\lim_{r\to0}u(re^{i\theta_2})
$$ 
for some $0<\theta_1<\theta_2<\pi$, 
then $u$ has non-tangential limit $L$ at the origin 
(see Axler et al. \cite[Theorem 2.10]{ABR}).
This latter result can be thought of as a harmonic function version 
of a classical result of Lindel\"of (see Rudin \cite[Theorem 12.10]{Rudin}).

\section{Harmonic functions vanishing along rays}\label{section:rays}

In this section we continue our study of 
uniqueness results for classical harmonic 
functions in $\Hu$ ($\alpha=0$). 
We now consider functions vanishing along rays in $\Hu$ 
emanating from the origin, that is, 
along rays of the form $\{te^{i\theta}:\ t>0\}$, where $0<\theta<\pi$.

We shall use a concept of admissible function of angles 
that we proceed to define.

\begin{dfn}\label{dfnadmissiblefcnangles}
Let $E\subset(0,\pi)$ be a set and $\eta:E\to\Z^+$ 
a positive integer valued function on $E$. 
We say that the function element $(E,\eta)$ 
is an {\it admissible function of angles} if it has the property that 
for every $k\in\Z^+$ there exists $\theta\in E$ 
such that $\sin(k\theta)\neq0$ and $k\geq\eta(\theta)$.
\end{dfn}

Our interest in admissible functions of angles stems from the following theorem.

\begin{theorem}\label{V0uniquenessray}
Let $u\in\mathcal{V}_{0}$.    
Assume that there is an admissible function of angles $(E,\eta)$ such that 
$$
\lim_{t\to+\infty}u(te^{i\theta})/t^{\eta(\theta)}=0
$$ 
for every $\theta\in E$. 
Then $u(z)=0$ for all $z\in\Hu$.
\end{theorem}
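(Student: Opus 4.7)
The plan is to reduce immediately to the explicit description of $\mathcal{V}_0$ supplied by Proposition \ref{harmonicobstructionclass}, and then use the admissibility property to extract a contradiction from the assumed decay. Write
$$
u(z)=\sum_{k=1}^{n+1}c_k\im(z^k),\quad z\in\Hu,
$$
for some $n\in\N$ and $c_1,\dots,c_{n+1}\in\C$. The key point is that for $t>0$ and $0<\theta<\pi$ one has the simple identity
$$
u(te^{i\theta})=\sum_{k=1}^{n+1}c_k\sin(k\theta)\,t^k,
$$
so that for every $\theta\in E$
$$
\frac{u(te^{i\theta})}{t^{\eta(\theta)}}=\sum_{k=1}^{n+1}c_k\sin(k\theta)\,t^{k-\eta(\theta)}.
$$

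Next I would argue by contradiction: assume $u\not\equiv0$ and let $N=\max\{k\in\Z^+:c_k\neq0\}$ (with $N\leq n+1$). Since $(E,\eta)$ is an admissible function of angles, Definition \ref{dfnadmissiblefcnangles} applied to $N\in\Z^+$ furnishes some $\theta\in E$ with $\sin(N\theta)\neq0$ and $N\geq\eta(\theta)$. Evaluate the displayed expression above along this ray. Terms with $k<\eta(\theta)$ vanish as $t\to+\infty$, terms with $k>N$ are absent by the choice of $N$, and the remaining term with index $k=N$ contributes $c_N\sin(N\theta)\,t^{N-\eta(\theta)}$ with $N-\eta(\theta)\geq 0$. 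If $N>\eta(\theta)$ this term dominates the (polynomial) sum and forces $|u(te^{i\theta})/t^{\eta(\theta)}|\to\infty$; if $N=\eta(\theta)$ all other surviving terms in the sum tend to $0$, so the limit equals $c_N\sin(N\theta)\neq0$. In either case the assumption $\lim_{t\to+\infty}u(te^{i\theta})/t^{\eta(\theta)}=0$ is violated, a contradiction.

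No serious obstacle is anticipated here; the substance of the theorem is really packed into the definition of admissible function of angles, which is precisely engineered so that this leading-term/dominant-balance argument goes through. The only subtlety is the two cases $N>\eta(\theta)$ versus $N=\eta(\theta)$, which must both be treated — the second one being the reason the definition demands $k\geq\eta(\theta)$ rather than the strict inequality.
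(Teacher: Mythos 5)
Your proof is correct and follows essentially the same route as the paper: decompose $u$ via Proposition \ref{harmonicobstructionclass}, apply admissibility to the top nonzero degree $N$ to pick $\theta\in E$ with $\sin(N\theta)\neq 0$ and $\eta(\theta)\leq N$, and read off $c_N\sin(N\theta)$ from the leading term. The paper phrases the last step slightly more compactly by writing $u(te^{i\theta})/t^{N}=\bigl(u(te^{i\theta})/t^{\eta(\theta)}\bigr)\cdot t^{\eta(\theta)-N}\to 0$, which handles your two cases $N>\eta(\theta)$ and $N=\eta(\theta)$ at once, but this is a cosmetic difference only.
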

 
\begin{proof}
We put $u\in\mathcal{V}_{0}$ on the form 
$$
u(z)=\sum_{k=1}^n c_k\im(z^k),\quad z\in\Hu,
$$
for some $c_1,\dots,c_n\in\C$ and $n\in \Z^+$. 
It suffices to show that $c_n=0$. For such functions, 
$$
\lim_{t\to+\infty}u(e^{i\theta}t)/t^{n}
=\lim_{t\to+\infty}\sum_{k=1}^nc_k\im(e^{ik\theta}t^k)/t^n=c_n \sin(n\theta).
$$
Since $(E,\eta)$ is an admissible function of angles we may choose 
$\theta\in E $ such that $\sin(n\theta)\ne0$ and $\eta(\theta)\le n$. 
It follows that
$$
c_n\sin(n\theta)=\lim_{t\to+\infty}\frac{u(te^{i\theta})}{t^{\eta(\theta)}}\frac{t^{\eta(\theta)}}{t^{n}}=0.
$$
Since $\sin(n\theta)\ne0$ we conclude that $c_n=0$.
\end{proof}

We next turn to uniqueness results. 

\begin{theorem}\label{classicalfcnofanglesvanishing}  
Let $u$ be a harmonic function in the open upper half-plane $\Hu$ 
which is of temperate growth at infinity. 
\begin{enumerate}
\item 
Assume that $u(z)\to 0$ as $\Hu\ni z\to x$ for every $x\in\R$.
\item\label{vanishingatfcnofangles}
Assume that there is an admissible function of angles $(E,\eta)$ such that 
$$
\lim_{t\to+\infty}u(te^{i\theta})/t^{\eta(\theta)}=0
$$ 
for every $\theta\in E$. 
\end{enumerate} 
Then $u(z)=0$ for all $z\in\Hu$.
\end{theorem}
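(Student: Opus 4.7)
The plan is to combine two results already established earlier in the paper and reduce Theorem \ref{classicalfcnofanglesvanishing} to a two-line argument. The structure exactly mirrors the proofs of Theorem \ref{introuniquenessthm} and Theorem \ref{classicaluniquenessthm2geodesics}: first use the representation theorem for temperate, boundary-vanishing $\alpha$-harmonic functions to bring $u$ into the obstruction class $\mathcal{V}_\alpha$, then use the relevant uniqueness theorem inside that class to kill $u$ identically.

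Concretely, with $\alpha=0$, hypothesis (1) supplies precisely the pointwise boundary-vanishing condition $u(z)\to 0$ as $\Hu\ni z\to x$ for every $x\in\R$ required by Theorem \ref{alphaobstructionclass}, and $u$ is assumed to be harmonic in $\Hu$ and of temperate growth at infinity. Applying Theorem \ref{alphaobstructionclass} therefore places $u$ in the class $\mathcal{V}_0$; explicitly, $u$ admits a representation of the form \eqref{harmonicobstructionfcn} via Proposition \ref{harmonicobstructionclass}.

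Hypothesis (2) then provides exactly the data required to invoke Theorem \ref{V0uniquenessray}: namely an admissible function of angles $(E,\eta)$ along which $u(te^{i\theta})/t^{\eta(\theta)}\to 0$ as $t\to+\infty$ for each $\theta\in E$. Theorem \ref{V0uniquenessray} then gives $u(z)=0$ for all $z\in\Hu$, completing the proof.

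There is essentially no obstacle here, since both representation and uniqueness have been done in the general $\mathcal{V}_0$ setting; the content of Theorem \ref{classicalfcnofanglesvanishing} is the packaging that relates intrinsic analytic assumptions on $u$ (harmonicity, temperate growth, classical boundary vanishing, ray vanishing at the prescribed orders) to those two theorems. The only mild point to be careful about in the write-up is that hypothesis (1) is stated in the classical pointwise form rather than the distributional form, which is why we apply Theorem \ref{alphaobstructionclass} directly rather than its distributional strengthening Theorem \ref{alphaobstructionclassdistributionalversion}; if desired one may remark that the distributional analogue (as in Theorem \ref{distributionaluniquenessthm2geodesics}) is proved in exactly the same way by substituting Theorem \ref{alphaobstructionclassdistributionalversion} for Theorem \ref{alphaobstructionclass} in the first step.
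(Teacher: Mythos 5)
Your proposal is correct and coincides with the paper's own proof: Theorem \ref{alphaobstructionclass} (with $\alpha=0$) gives $u\in\mathcal{V}_0$, and Theorem \ref{V0uniquenessray} then yields $u\equiv 0$. Your remark about the distributional variant also matches how the paper handles Theorem \ref{fcnofanglesvanishing}.
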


\begin{proof}  
From Theorem \ref{alphaobstructionclass} we have that $u\in\mathcal{V}_0$. 
An application of Theorem \ref{V0uniquenessray} yields that 
$u(z)=0$ for $z\in\Hu$.
\end{proof}

We next extend the validity of 
Theorem \ref{classicalfcnofanglesvanishing}    
to allow for a distributional boundary value in 
\eqref{classicalvanishingonrealboundary}.

\begin{theorem}\label{fcnofanglesvanishing}  
Let $u$ be a harmonic function in the open upper half-plane $\Hu$ 
which is of temperate growth at infinity. 
\begin{enumerate}
\item[(1)'] 
Assume that $\lim_{y\to0+} u(\cdot+iy)=0$ in $\De'(\R)$.
\item[(2)]
Assume that there is an admissible function of angles $(E,\eta)$ such that 
$$
\lim_{t\to+\infty}u(te^{i\theta})/t^{\eta(\theta)}=0
$$ 
for every $\theta\in E$. 
\end{enumerate} 
Then $u(z)=0$ for all $z\in\Hu$.
\end{theorem}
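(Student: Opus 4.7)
The plan is to follow exactly the same two-step scheme used in the proof of Theorem \ref{classicalfcnofanglesvanishing}, but substituting the distributional version of the representation theorem for the classical one. The only essential difference between the two theorems lies in hypothesis (1)/(1)': here the vanishing on the real line is interpreted in the sense of distributions, $\lim_{y\to 0+}u(\cdot + iy)=0$ in $\De'(\R)$, rather than pointwise.

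First, I would invoke Theorem \ref{alphaobstructionclassdistributionalversion} (with $\alpha=0$), whose hypotheses are tailor-made for this situation: an $\alpha$-harmonic function $u$ in $\Hu$ of temperate growth at infinity, together with a distributional vanishing boundary value on $\R$, yields $u \in \mathcal{V}_\alpha$. Applied with $\alpha=0$, this gives $u \in \mathcal{V}_0$. Thus the distributional assumption in (1)' is absorbed into the structural representation
\begin{equation*}
u(z)=\sum_{k=1}^n c_k \im(z^k),\quad z\in\Hu,
\end{equation*}
for some $n\in\Z^+$ and $c_1,\dots,c_n\in\C$, as furnished by Proposition \ref{harmonicobstructionclass}.

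Second, with $u$ now known to lie in $\mathcal{V}_0$, hypothesis (2) on the admissible function of angles $(E,\eta)$ is exactly the hypothesis of Theorem \ref{V0uniquenessray}. Applying that theorem immediately yields $u(z)=0$ for all $z\in\Hu$, which is the desired conclusion.

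There is no real obstacle here, since all the heavy lifting has been done in the earlier results. The distributional boundary-value version of the representation theorem (Theorem \ref{alphaobstructionclassdistributionalversion}) is what makes the weakening of (1) to (1)' essentially free of charge, and the vanishing-along-rays uniqueness for the class $\mathcal{V}_0$ (Theorem \ref{V0uniquenessray}) is precisely the finite-dimensional arithmetic fact that the admissibility of $(E,\eta)$ is designed to extract. The proof is therefore just one line, written in full parallel with the proof of Theorem \ref{classicalfcnofanglesvanishing}.
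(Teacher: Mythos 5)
Your proposal is correct and follows exactly the paper's own argument: Theorem \ref{alphaobstructionclassdistributionalversion} (with $\alpha=0$) gives $u\in\mathcal{V}_0$, and Theorem \ref{V0uniquenessray} then forces $u\equiv 0$. Nothing is missing.
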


\begin{proof}  
From Theorem \ref{alphaobstructionclassdistributionalversion}  
we have that $u\in\mathcal{V}_0$. 
An application of Theorem \ref{V0uniquenessray} yields that 
$u(z)=0$ for $z\in\Hu$.
\end{proof}

Let us comment on the generic situation.

\begin{cor}\label{genericrayvanishing}
Let $u$ be a harmonic function in the open upper half-plane $\Hu$ 
which is of temperate growth at infinity.
\begin{enumerate}
\item[(1)'] 
Assume that $\lim_{y\to0+} u(\cdot+iy)=0$ in $\De'(\R)$.
\item[(2)]
Assume that $u$ satisfies the vanishing condition that 
$\lim_{t\to+\infty}u(te^{i\theta})/t=0$ for some $0<\theta<\pi$ 
which is not a rational multiple of $\pi$.
\end{enumerate} 
Then $u(z)=0$ for all $z\in\Hu$.
\end{cor}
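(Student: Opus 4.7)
The plan is to deduce this corollary directly from Theorem \ref{fcnofanglesvanishing} by exhibiting a one-point admissible function of angles, as was already hinted in the discussion following Theorem \ref{classicalgenericrayuniquenessresult}.

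First I would set $E=\{\theta\}$ (a single angle) and define $\eta\colon E\to\Z^+$ by $\eta(\theta)=1$. Then the vanishing condition \textup{(2)} of the corollary reads exactly $\lim_{t\to+\infty}u(te^{i\theta})/t^{\eta(\theta)}=0$, so if $(E,\eta)$ turns out to be an admissible function of angles then hypothesis \textup{(2)} of Theorem \ref{fcnofanglesvanishing} is satisfied. Hypothesis \textup{(1)'} is assumed, and $u$ is harmonic and of temperate growth at infinity, so Theorem \ref{fcnofanglesvanishing} would immediately yield $u\equiv 0$ in $\Hu$.

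The only thing left to verify is that $(E,\eta)=(\{\theta\},1)$ is admissible in the sense of Definition \ref{dfnadmissiblefcnangles}. Given any $k\in\Z^+$, I must produce $\theta'\in E$ with $\sin(k\theta')\neq 0$ and $k\geq\eta(\theta')$. Since $E=\{\theta\}$ the only candidate is $\theta'=\theta$, and the requirement $k\geq\eta(\theta)=1$ holds trivially. The remaining requirement is that $\sin(k\theta)\neq 0$ for every $k\in\Z^+$, which amounts to $k\theta\notin\pi\Z$, i.e., $\theta/\pi\neq m/k$ for every integer $m$. This is guaranteed by the arithmetic assumption that $\theta$ is not a rational multiple of $\pi$.

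There is no real obstacle here: the corollary is a direct specialization of Theorem \ref{fcnofanglesvanishing} once one recognizes that the arithmetic condition on $\theta$ is precisely what makes the one-point function element $(\{\theta\},1)$ admissible. The structural work has already been carried out in the preceding sections, so the proof reduces to the short verification above.
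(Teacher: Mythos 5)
Your proposal is correct and follows exactly the paper's own argument: the paper also takes $E=\{\theta\}$ with $\eta(\theta)=1$, notes that irrationality of $\theta/\pi$ makes this a one-point admissible function of angles, and applies Theorem \ref{fcnofanglesvanishing}. You merely spell out the admissibility check that the paper leaves implicit.
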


\begin{proof}
Let $E=\{\theta\}$ and $\eta(\theta)=1$. Then $(E,\eta)$ is an admissible function of angles for which condition (2) of Theorem \ref{fcnofanglesvanishing} holds. Thus, an application of the mentioned theorem gives the result.
\end{proof}

Theorem \ref{classicalgenericrayuniquenessresult} in 
the introduction is proved in the same way as 
Corollary \ref{genericrayvanishing} by using 
Theorem \ref{classicalfcnofanglesvanishing}  
instead of Theorem \ref{fcnofanglesvanishing}.

The examples  
$$
u_k(z)=\im(z^k)
$$ 
for $k=1,2,\dots$, show that the arithmetic condition on 
$\theta\in(0,\pi)$ in Corollary \ref{genericrayvanishing} 
is to the point. Clearly, the polynomial $u_k$ vanishes on $\R$. 
Moreover,  
if $\theta=\frac{m}{n}\pi$ for some integers $m$ and $n$ with $n\geq1$, 
then the function $u_n$ vanishes on the line $e^{i\theta}\R$.

\begin{cor}\label{rationalrayvanishing}
Let $u$ be a harmonic function in the open upper half-plane $\Hu$ 
which is of temperate growth at infinity.  
Let $\tau=\frac{m}{n}\pi$ with $m,n\in\Z^+$ relatively prime and $m<n$.
\begin{enumerate}
\item[(1)'] 
Assume that $\lim_{y\to0+} u(\cdot+iy)=0$ in $\De'(\R)$.
\item[(2)]
Assume that 
$\lim_{t\to+\infty}u(te^{i\tau})/t=0$.
\item[(3)] 
Assume that 
$\lim_{t\to+\infty}u(te^{i\theta})/t^n=0$ 
for some $0<\theta<\pi$ which is not a rational multiple of $\pi$.
\end{enumerate} 
Then $u(z)=0$ for all $z\in\Hu$.
\end{cor}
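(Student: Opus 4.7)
The plan is to reduce the statement to Theorem \ref{fcnofanglesvanishing} by constructing a suitable admissible function of angles $(E,\eta)$ built from the two angles $\tau$ and $\theta$ provided by the hypotheses.

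I would take $E=\{\tau,\theta\}$ with $\eta(\tau)=1$ and $\eta(\theta)=n$. Hypotheses (2) and (3) give precisely the vanishing conditions $\lim_{t\to+\infty}u(te^{i\alpha})/t^{\eta(\alpha)}=0$ for each $\alpha\in E$ required by clause (2) of Theorem \ref{fcnofanglesvanishing}, and hypothesis (1)' is exactly clause (1)'. So the only thing to verify is that $(E,\eta)$ is an admissible function of angles in the sense of Definition \ref{dfnadmissiblefcnangles}: for every $k\in\Z^+$, I must produce an $\alpha\in E$ with $\sin(k\alpha)\neq 0$ and $k\geq\eta(\alpha)$.

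The verification splits into two cases according to divisibility by $n$. Since $\gcd(m,n)=1$, we have $\sin(k\tau)=\sin(km\pi/n)=0$ if and only if $n\mid km$ if and only if $n\mid k$. Thus when $n\nmid k$, the angle $\tau$ works, because $\sin(k\tau)\neq 0$ and $k\geq 1=\eta(\tau)$. When $n\mid k$, we automatically have $k\geq n=\eta(\theta)$, and since $\theta$ is not a rational multiple of $\pi$, $\sin(k\theta)\neq 0$ for every $k\in\Z^+$; so the angle $\theta$ works. In particular, every $k\in\Z^+$ is handled, so $(E,\eta)$ is admissible.

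With admissibility established, Theorem \ref{fcnofanglesvanishing} applies directly and yields $u\equiv 0$ on $\Hu$. There is no real obstacle here; the only substantive point is the arithmetic observation that the single non-rational-multiple angle $\theta$ can compensate exactly at the indices $k$ where $\tau$ fails, namely the multiples of $n$, and that at those indices $k$ is large enough ($k\geq n$) to meet the threshold $\eta(\theta)=n$. This is why the exponent $n$ in hypothesis (3) is both natural and sharp.
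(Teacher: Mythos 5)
Your proposal is correct and matches the paper's own proof: the same function element $E=\{\tau,\theta\}$, $\eta(\tau)=1$, $\eta(\theta)=n$ is used, followed by an appeal to Theorem \ref{fcnofanglesvanishing}. The only difference is that you verify admissibility by a direct case split on $n\mid k$, whereas the paper leaves this to the general construction of Theorem \ref{constructionfoafinite} (with $d(\tau)=n$); both checks are equivalent.
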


\begin{proof}
Let $E=\{\tau,\theta\}$ and set $\eta(\tau)=1$ and $\eta(\theta)=n$. Then $(E,\eta)$ is an admissible function of angles such that condition (2) of Theorem \ref{fcnofanglesvanishing} is satisfied. Applying the mentioned theorem thus gives the result.
\end{proof}

It might be worthwhile to state the special case of Corollary \ref{rationalrayvanishing} obtained when $\tau=\pi/2$:

\begin{cor}\label{testthm}
Let $u$ be a harmonic function in the open upper half-plane $\Hu$ 
which is of temperate growth at infinity.
\begin{enumerate}
\item[(1)']
Assume that $\lim_{y\to0+} u(\cdot+iy)=0$ in $\De'(\R)$.
\item[(2)]
Assume that $u$ satisfies the vanishing condition that 
$\lim_{y\to+\infty}u(iy)/y=0$. 
\item[(3)]
Assume that $u$ satisfies the vanishing condition that 
$\lim_{t\to+\infty}u(te^{i\theta})/t^2=0$ for some $0<\theta<\pi$ 
which is not a rational multiple of $\pi$.
\end{enumerate} 
Then $u(z)=0$ for all $z\in\Hu$.
\end{cor}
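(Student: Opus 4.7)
The plan is to recognize that this statement is the specialization of Corollary \ref{rationalrayvanishing} to the case $\tau = \pi/2$. Writing $\pi/2 = \frac{m}{n}\pi$ with $m=1$ and $n=2$, which are indeed relatively prime positive integers with $m<n$, hypothesis (2) of Corollary \ref{rationalrayvanishing} reads $\lim_{t\to+\infty} u(te^{i\pi/2})/t = 0$, i.e.\ $\lim_{y\to+\infty} u(iy)/y = 0$, which is exactly hypothesis (2) here. Similarly, hypothesis (3) of Corollary \ref{rationalrayvanishing} becomes $\lim_{t\to+\infty} u(te^{i\theta})/t^2 = 0$ for some $\theta\in(0,\pi)$ not a rational multiple of $\pi$, matching hypothesis (3) here verbatim. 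Hypothesis (1)' is the same distributional boundary assumption. So the conclusion $u \equiv 0$ in $\Hu$ follows immediately from Corollary \ref{rationalrayvanishing}.

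If one prefers a self-contained argument, the strategy is instead to invoke Theorem \ref{fcnofanglesvanishing} directly with the two-point function element $E = \{\pi/2, \theta\}$ and $\eta(\pi/2) = 1$, $\eta(\theta) = 2$. The vanishing hypotheses along the two rays are then precisely conditions (2) and (3) of the present corollary, and hypothesis (1)' matches condition (1)' in Theorem \ref{fcnofanglesvanishing}. All that needs to be verified is that $(E,\eta)$ is an admissible function of angles in the sense of Definition \ref{dfnadmissiblefcnangles}: for every $k \in \Z^+$ one must exhibit some $\theta' \in E$ with $\sin(k\theta') \ne 0$ and $k \geq \eta(\theta')$.

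The verification of admissibility splits into two easy cases. If $k = 1$, take $\theta' = \pi/2$; then $\sin(k\theta') = \sin(\pi/2) = 1 \neq 0$ and $k = 1 \geq 1 = \eta(\pi/2)$. If $k \geq 2$, take $\theta' = \theta$; since $\theta/\pi$ is irrational, $k\theta$ is not an integer multiple of $\pi$, hence $\sin(k\theta) \neq 0$, and $k \geq 2 = \eta(\theta)$. This establishes the admissibility of $(E,\eta)$ and completes the reduction to Theorem \ref{fcnofanglesvanishing}. There is no serious obstacle; the only substantive ingredient is the irrationality of $\theta/\pi$, which is used exactly as in the proof of Corollary \ref{rationalrayvanishing}.
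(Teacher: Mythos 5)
Your proposal is correct and follows the paper's own route: the paper states this corollary precisely as the special case $\tau=\pi/2$ of Corollary \ref{rationalrayvanishing}, whose proof is exactly your second argument with $E=\{\pi/2,\theta\}$, $\eta(\pi/2)=1$, $\eta(\theta)=2$ fed into Theorem \ref{fcnofanglesvanishing}. The admissibility check is carried out correctly, so nothing is missing.
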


Let us elaborate some more on the theme of Corollary \ref{testthm}.

\begin{cor}\label{testthm2}
Let $u$ be a harmonic function in the open upper half-plane $\Hu$ 
which is of temperate growth at infinity.
\begin{enumerate}
\item[(1)'] 
Assume that $\lim_{y\to0+} u(\cdot+iy)=0$ in $\De'(\R)$.
\item[(2)]
Assume that 
$$
\lim_{t\to+\infty}u(te^{i2^{-k}\pi})/t^{2^{k-1}}=0
$$
for $k=1,2,\dots$.
\end{enumerate} 
Then $u(z)=0$ for all $z\in\Hu$.
\end{cor}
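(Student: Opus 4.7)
The plan is to invoke Theorem \ref{fcnofanglesvanishing} with the specific function element $(E,\eta)$ defined by
\[
E=\{2^{-k}\pi:k\in\Z^+\},\qquad \eta(2^{-k}\pi)=2^{k-1}.
\]
Hypothesis (2) of the corollary is precisely hypothesis (2) of Theorem \ref{fcnofanglesvanishing} for this choice, and (1)' is shared. Thus the whole task reduces to verifying that this $(E,\eta)$ is an admissible function of angles in the sense of Definition \ref{dfnadmissiblefcnangles}.

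To check admissibility, I would fix an arbitrary $n\in\Z^+$ and factor it as $n=2^am$ with $m$ odd and $a\in\N$. I would then take $\theta=2^{-(a+1)}\pi\in E$. The $\sin$-condition reads $\sin(n\theta)=\sin(2^am\cdot 2^{-(a+1)}\pi)=\sin(m\pi/2)$, which is nonzero since $m$ is odd. For the growth condition, $\eta(\theta)=2^a\leq 2^am=n$, so $n\geq\eta(\theta)$ as required. This shows that $(E,\eta)$ is admissible.

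With admissibility established, Theorem \ref{fcnofanglesvanishing} applied to $u$ yields $u(z)=0$ for all $z\in\Hu$. The main (and only nonroutine) point is the arithmetic verification of admissibility; it is precisely the observation that, writing $n=2^am$ with $m$ odd, the angle $2^{-(a+1)}\pi$ simultaneously survives the sine test and satisfies the order-of-growth bound $2^a\leq n$.
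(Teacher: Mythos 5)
Your proof is correct, and it differs from the paper's in the one step that carries any content: the verification that $(E,\eta)$ with $E=\{2^{-k}\pi\}$ and $\eta(2^{-k}\pi)=2^{k-1}$ is an admissible function of angles. The paper obtains admissibility by invoking its general construction, Theorem \ref{constructionfoainfinite}, applied to $\theta_k=2^{-k}\pi$ (so $d(\theta_k)=2^k$ and $\operatorname{lcm}(d(\theta_1),\dots,d(\theta_{k-1}))=2^{k-1}$), whereas you verify Definition \ref{dfnadmissiblefcnangles} directly: writing $n=2^a m$ with $m$ odd and choosing $\theta=2^{-(a+1)}\pi$, you get $\sin(n\theta)=\sin(m\pi/2)\neq 0$ and $\eta(\theta)=2^a\leq 2^a m=n$. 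Both arguments are sound; yours is self-contained and more elementary, avoiding the ideal-theoretic and lcm machinery of Section \ref{section:construction}, while the paper's route exhibits this example as an instance of its general family of (minimal) admissible functions of angles and thereby gets the admissibility for free once Theorem \ref{constructionfoainfinite} is in place. The final reduction to Theorem \ref{fcnofanglesvanishing} is identical in both.
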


\begin{proof}
Let $E=\{2^{-k}\pi:\ k=1,2,\dots\}$ and 
set $\eta(2^{-k}\pi)=2^{k-1}$ for $k=1,2,\dots$.
From Theorem \ref{constructionfoainfinite} in the next section 
we have that $(E,\eta)$ is an admissible function of angles. 
Clearly condition (2) of Theorem \ref{fcnofanglesvanishing} is satisfied. 
Applying the mentioned theorem thus gives the result.
\end{proof}

\section{Admissible functions of angles}\label{section:construction}

In this section we shall provide some constructions of admissible functions 
of angles (see Theorems \ref{constructionfoainfinite} 
and \ref{constructionfoafinite}). 
Examples of such function elements are needed for successful applications of 
Theorems \ref{classicalfcnofanglesvanishing} or \ref{fcnofanglesvanishing}. 
The set of admissible functions of angles is equipped with a natural 
partial order.
We show that the admissible functions of angles constructed are in fact 
the minimal elements in the partial order 
(see Theorem \ref{minimalelementsA}). 
We also show that every admissible function of angles 
has a lower bound which is minimal (see Theorem \ref{lowerboundfoa}).  
The construction uses some notions from ideal theory 
for the ring of integers $\Z$.   

Let $\theta\in\R$ be a real number and consider the set of integers  
$$
\mathcal{I}(\theta)=\{m\in \Z: \ \sin(m\theta)=0  \}.
$$
Using standard properties of 
the sine function, 
it is straight\-forward to check that the set $\mathcal{I}(\theta)$ is an ideal in $\Z$.
Since the ring $\Z$ is a principal ideal domain, we have that 
$$
\mathcal{I}(\theta)=d(\theta)\Z
$$ 
for some integer $d(\theta)$. Since the units in $\Z$ are $\pm1$, 
this integer $d(\theta)$ is uniquely determined by the condition that $d(\theta)\geq0$.
This defines a function $d:\theta\mapsto d(\theta)$ from $\R$ to $\N$.

We observe that $d(\theta)=0$ if and only if 
$\theta\in\R$ is not a rational multiple of $\pi$. Furthermore,  
if $\theta=\frac{r}{s}\pi $ with $r\in\Z$ and $s\in \Z^+$ relatively prime, 
then $d(\theta)=s$. These assertions are straight\-forward to check.

We denote by $\operatorname{lcm}(a_1,\dots,a_n)$ the least common multiple of 
integers $a_1,\dots,a_n$. Recall that $\operatorname{lcm}(a_1,\dots,a_n)$ is 
the non-negative generator of the ideal $\cap_{k=1}^na_k\Z$:   
$$
\operatorname{lcm}(a_1,\dots,a_n)\Z=\cap_{k=1}^na_k\Z.
$$ 
The symbol $a\vert b$ means that the integer $b$ is divisible by the integer $a$  
in the usual sense that $b=ac$ for some integer $c$.

We are now ready for the construction of admissible functions of angles.  

\begin{theorem}\label{constructionfoainfinite}
Let $\{\theta_k\}_{k=1}^\infty$ be an infinite sequence of rational multiples of $\pi$ 
from the interval $(0,\pi)$ such that 
$$
d(\theta_k)\nmid \operatorname{lcm}(d(\theta_1),\dots,d(\theta_{k-1}))
$$
for $k=2,3,\dots$. Set $E=\{\theta_k:\ k=1,2,\dots \}$ and 
define $\eta:E\to\Z^+$ by $\eta(\theta_1)=1$ and 
$$
\eta(\theta_k)= \operatorname{lcm}(d(\theta_1),\dots,d(\theta_{k-1}))
$$
for $k=2,3,\dots$. Then $(E,\eta)$ is an admissible function of angles. 
\end{theorem}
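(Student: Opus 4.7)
The plan is to verify the definition of admissible function of angles directly: given an arbitrary $n\in\Z^+$, I will exhibit some $\theta_j\in E$ such that $\sin(n\theta_j)\neq0$ and $n\geq\eta(\theta_j)$. Using the paper's characterization $\sin(m\theta)=0 \Leftrightarrow m\in d(\theta)\Z$, the condition $\sin(n\theta_j)\neq 0$ becomes the divisibility statement $d(\theta_j)\nmid n$, so the whole argument reduces to an arithmetic exercise with the ideals $d(\theta_k)\Z$.

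First I would introduce the auxiliary sequence
$$
L_k=\operatorname{lcm}(d(\theta_1),\dots,d(\theta_k)),\qquad k\geq 1,
$$
with the convention $L_0=1$, so that $\eta(\theta_k)=L_{k-1}$ uniformly for all $k\geq1$. The hypothesis $d(\theta_k)\nmid L_{k-1}$ translates (via $L_k=\operatorname{lcm}(L_{k-1},d(\theta_k))$) into the strict inequality $L_k>L_{k-1}$. Hence $\{L_k\}$ is a strictly increasing sequence of positive integers, and in particular $L_k\to\infty$.

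Next, for a given $n\in\Z^+$ I would let $j$ be the smallest positive integer such that $d(\theta_j)\nmid n$. Such a $j$ exists: otherwise $d(\theta_i)\mid n$ for every $i$, which forces $L_k\mid n$ for every $k$ and contradicts $L_k\to\infty$. By the minimality of $j$, we have $d(\theta_i)\mid n$ for all $i<j$, hence $L_{j-1}\mid n$ and in particular $\eta(\theta_j)=L_{j-1}\leq n$. Since $d(\theta_j)\nmid n$ is exactly $\sin(n\theta_j)\neq0$, the pair $(E,\eta)$ satisfies Definition \ref{dfnadmissiblefcnangles}.

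There is no genuine obstacle; the only mild subtlety is setting up the ideal-theoretic dictionary $\sin(n\theta)=0\Leftrightarrow d(\theta)\mid n$ correctly and adopting the convention $L_0=1$ so that the case $j=1$ (where $\eta(\theta_1)=1$) is absorbed into the same argument as $j\geq 2$.
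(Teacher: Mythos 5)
Your proof is correct and follows essentially the same route as the paper's: the paper likewise observes that the hypothesis forces the sequence $\operatorname{lcm}(d(\theta_1),\dots,d(\theta_k))$ to be strictly increasing (equivalently $\bigcap_{k=1}^\infty d(\theta_k)\Z=\{0\}$), chooses for a given $m$ the least index $k$ with $d(\theta_k)\nmid m$, and concludes exactly as you do. The one small step you omit is checking that $\eta$ is well defined, i.e.\ that the $\theta_k$ are pairwise distinct; this is immediate from your own setup, since $\theta_j=\theta_k$ with $j<k$ would give $d(\theta_k)=d(\theta_j)\mid L_j\mid L_{k-1}$, contradicting the hypothesis.
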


\begin{proof}
Observe first that the requirements on the sequence $\{\theta_k\}_{k=1}^\infty$ 
guarantee that  
$$
2\leq \operatorname{lcm}(d(\theta_1),\dots,d(\theta_{j}))
<\operatorname{lcm}(d(\theta_1),\dots,d(\theta_{k}))
$$
for $1\leq j<k$. 
Thus  $\theta_j\neq\theta_k$ if $j\neq k$ and $\cap_{k=1}^\infty d(\theta_k)\Z=\{0\}$.

The function $\eta:E\to\Z^+$ is well-defined since the $\theta_k$'s are distinct. 
We proceed to prove that $(E,\eta)$ is an admissible function of angles. 
Let $m\in\Z^+$ be a positive integer. Since $\cap_{k=1}^\infty d(\theta_k)\Z=\{0\}$, 
there exists $k\in\Z^+$ such that $m\not\in d(\theta_k)\Z$. Furthermore, 
by the well-ordering of positive integers we can also arrange that 
$m\in d(\theta_j)\Z$ for $1\leq j<k$. Since  $m\not\in d(\theta_k)\Z$, 
we have that $\sin(m\theta_k)\neq0$. 
If $k=1$, we clearly have that $m\geq1=\eta(\theta_1)$. 
Assume next that $k\geq2$.  Since  $m\in d(\theta_j)\Z$ for $1\leq j<k$, 
we have 
$$
m\in \cap_{j=1}^{k-1}d(\theta_j)\Z
= \operatorname{lcm}(d(\theta_1),\dots,d(\theta_{k-1}))\Z,  
$$
which allows us to conclude that 
$m\geq \operatorname{lcm}(d(\theta_1),\dots,d(\theta_{k-1}))=\eta(\theta_k)$. 
This completes the proof of the theorem.
\end{proof}

Theorem \ref{constructionfoainfinite} provides a multitude of examples of 
admissible functions of angles.
For example, 
let $\theta_k=\frac{n_k}{2^k}\pi$ with $1\leq n_k<2^k$ odd for $k=1,2,\dots$. 
Now $d(\theta_k)=2^{k}$ for $k=1,2,\dots$, 
which makes evident that the assumption of Theorem \ref{constructionfoainfinite} 
is satisfied. In this case $\eta(\theta_k)=2^{k-1}$ for $k=1,2,\dots$.

We next turn to the construction of admissible functions of angles $(E,\eta)$ 
with $E$ finite.

\begin{theorem}\label{constructionfoafinite}
Let $\{\theta_k\}_{k=1}^n$ be a finite sequence of rational multiples of $\pi$ 
from the interval $(0,\pi)$ such that 
$$
d(\theta_k)\nmid \operatorname{lcm}(d(\theta_1),\dots,d(\theta_{k-1}))
$$
for $2\leq k\leq n$, where  $n\in\N$. 
Let $\theta_{n+1}\in(0,\pi)$ be an irrational multiple of $\pi$.
Set $E=\{\theta_k:\ 1\leq k\leq n+1 \}$ and 
define $\eta:E\to\Z^+$ by $\eta(\theta_1)=1$ and 
$$
\eta(\theta_k)= \operatorname{lcm}(d(\theta_1),\dots,d(\theta_{k-1}))
$$
for $2\leq k\leq n+1$. Then $(E,\eta)$ is an admissible function of angles. 
\end{theorem}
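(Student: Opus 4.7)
My plan is to adapt the proof of Theorem \ref{constructionfoainfinite} to the finite setting, where the extra angle $\theta_{n+1}$ (an irrational multiple of $\pi$) serves as a ``catch-all'' that handles the integers $m$ which happen to lie in every one of the ideals $d(\theta_k)\Z$ for $1\leq k\leq n$. The overall structure is: the divisibility hypothesis on the $d(\theta_k)$'s ensures that the numbers $\operatorname{lcm}(d(\theta_1),\ldots,d(\theta_{k-1}))$ are strictly increasing in $k$ (so in particular the $\theta_k$'s are pairwise distinct and $\eta$ is well-defined), and the irrationality of $\theta_{n+1}/\pi$ means $d(\theta_{n+1})=0$, hence $\sin(m\theta_{n+1})\neq 0$ for every $m\in\Z^+$.

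Given $m\in\Z^+$, I would distinguish two cases. If $m\in d(\theta_j)\Z$ for all $1\leq j\leq n$, then $m\in \cap_{j=1}^n d(\theta_j)\Z=\operatorname{lcm}(d(\theta_1),\ldots,d(\theta_n))\Z$, so $m\geq \operatorname{lcm}(d(\theta_1),\ldots,d(\theta_n))=\eta(\theta_{n+1})$; combined with $\sin(m\theta_{n+1})\neq 0$ from the irrationality of $\theta_{n+1}/\pi$, the pair $\theta_{n+1}\in E$ witnesses admissibility for this $m$. Otherwise, let $k$ be the smallest index in $\{1,\ldots,n\}$ with $m\notin d(\theta_k)\Z$, so that $\sin(m\theta_k)\neq 0$. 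If $k=1$ then $m\geq 1=\eta(\theta_1)$; if $k\geq 2$, then $m\in d(\theta_j)\Z$ for $1\leq j<k$ by minimality, hence $m\in \operatorname{lcm}(d(\theta_1),\ldots,d(\theta_{k-1}))\Z$, whence $m\geq \eta(\theta_k)$.

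The argument is essentially routine once one notices that the role played by the tail $\cap_{k=1}^\infty d(\theta_k)\Z=\{0\}$ in the proof of Theorem \ref{constructionfoainfinite} is now taken over by the single angle $\theta_{n+1}$; the only ``obstacle'' is bookkeeping to make sure that the finite case is handled uniformly, i.e. that the definition $\eta(\theta_{n+1})=\operatorname{lcm}(d(\theta_1),\ldots,d(\theta_n))$ dovetails with the fact that every $m$ satisfying $m\in d(\theta_j)\Z$ for all $j\leq n$ is automatically an element of the corresponding lcm-ideal. No further machinery beyond the principal ideal structure of $\Z$ already used in the proof of Theorem \ref{constructionfoainfinite} is required.
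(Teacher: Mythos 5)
Your proposal is correct and follows essentially the same route as the paper: both use the irrationality of $\theta_{n+1}/\pi$ to guarantee $\sin(m\theta_{n+1})\neq0$ for all $m$, and both use the minimal index $k$ with $m\notin d(\theta_k)\Z$ together with $\cap_{j<k}d(\theta_j)\Z=\operatorname{lcm}(d(\theta_1),\dots,d(\theta_{k-1}))\Z$ for the remaining $m$. The only cosmetic difference is that the paper splits cases by whether $m\geq\operatorname{lcm}(d(\theta_1),\dots,d(\theta_n))$ while you split by whether $m$ lies in all the ideals $d(\theta_j)\Z$; these lead to the same witnesses up to an inessential overlap.
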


\begin{proof}
The function $\eta:E\to\Z^+$ is well-defined since the $\theta_k$'s are distinct. 
We proceed to prove that $(E,\eta)$ is an admissible function of angles. 
Let $m\in\Z^+$ be a positive integer. 
If $m\geq \operatorname{lcm}(d(\theta_1),\dots,d(\theta_{n}))$, then 
it is straight\-forward to check that 
$\sin(m\theta_{n+1})\neq0$ and  $m\geq\eta(\theta_{n+1})$.

It remains to consider the case  
$1\leq m< \operatorname{lcm}(d(\theta_1),\dots,d(\theta_{n}))$.    
Since 
\begin{equation*}
\cap_{k=1}^n d(\theta_k)\Z=\operatorname{lcm}(d(\theta_1),\dots,d(\theta_{n}))\Z,
\end{equation*}
there exists $k\in\Z^+$ with $1\leq k\leq n$ such that 
$m\not\in d(\theta_k)\Z$ and 
$m\in d(\theta_j)\Z$ for $1\leq j<k$. 
Since  $m\not\in d(\theta_k)\Z$, we have that $\sin(m\theta_k)\neq0$. 
If $k=1$, we clearly have that $m\geq1=\eta(\theta_1)$.
Assume next that $k\geq2$.  Since  $m\in d(\theta_j)\Z$ for $1\leq j<k$, 
we have 
$$
m\in \cap_{j=1}^{k-1}d(\theta_j)\Z
= \operatorname{lcm}(d(\theta_1),\dots,d(\theta_{k-1}))\Z,  
$$
which allows us to conclude that 
$m\geq \operatorname{lcm}(d(\theta_1),\dots,d(\theta_{k-1}))=\eta(\theta_k)$. 
This yields the conclusion of the theorem.
\end{proof}

We point out that the case of an empty sequence $\{\theta_k\}_{k=1}^n$ ($n=0$)
in Theorem \ref{constructionfoafinite} yields the admissible 
function of angles used in the proof of Corollary \ref{genericrayvanishing}.

Let $E_j\subset(0,\pi)$ and $\eta_j:E_j\to\Z^+$ ($j=1,2$) 
be such that $E_1\subset E_2$ and 
$\eta_1(\theta)\geq \eta_2(\theta)$ for $\theta\in E_1$. 
If $(E_1,\eta_1)$ is an admissible function of angles, 
then so is $(E_2,\eta_2)$. 
For applications of Theorems \ref{classicalfcnofanglesvanishing} 
or \ref{fcnofanglesvanishing}  
it is of interest to have an admissible 
function of angles $(E,\eta)$ which is as economical as possible 
in this respect.

Let us denote by $\mathcal{A}$ the set of all admissible functions of angles. 
We equip the set $\mathcal{A}$  with a relation ($\leq$) defined by 
$(E_1,\eta_1)\leq (E_2,\eta_2)$ 
if $E_1\subset E_2$ and $\eta_1(\theta)\geq\eta_2(\theta)$ for $\theta\in E_1$.
It is straight\-forward to check that this relation gives the set $\mathcal{A}$
a structure of a partial order.

We denote by $\mathcal{A}_0$ the sub\-set of $\mathcal{A}$  
consisting of those admissible functions of angles $(E,\eta)$ that are  
constructed as in Theorem \ref{constructionfoainfinite} 
or \ref{constructionfoafinite}.

We next prove that every element in $\mathcal{A}$ has a lower bound 
from the set $\mathcal{A}_0$.

\begin{theorem}\label{lowerboundfoa}
Let $(E_1,\eta_1)\in\mathcal{A}$ be arbitrary. 
Then there exists $(E,\eta)\in\mathcal{A}_0$ 
such that 
$(E,\eta)\leq (E_1,\eta_1)$ in $\mathcal{A}$. 
\end{theorem}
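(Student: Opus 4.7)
The plan is to construct the desired $(E,\eta)\in\mathcal{A}_0$ greedily from $(E_1,\eta_1)$, selecting a sequence $\theta_1,\theta_2,\ldots$ from $E_1$ that will automatically satisfy the hypotheses of either Theorem~\ref{constructionfoainfinite} (if the process never terminates) or Theorem~\ref{constructionfoafinite} (if it does). At each stage I would probe the admissibility of $(E_1,\eta_1)$ at a carefully chosen positive integer, extracting either a new rational multiple of $\pi$ to append or an irrational multiple that closes out the construction.

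For the base step, apply admissibility of $(E_1,\eta_1)$ at $k=1$: since $\sin\theta\ne0$ automatically on $(0,\pi)$, this produces $\theta_1\in E_1$ with $\eta_1(\theta_1)=1$. If $\theta_1$ is an irrational multiple of $\pi$, we are done immediately: the singleton $E=\{\theta_1\}$ with $\eta(\theta_1)=1$ lies in $\mathcal{A}_0$ via the $n=0$ case of Theorem~\ref{constructionfoafinite}, and $\eta(\theta_1)=1=\eta_1(\theta_1)$. Otherwise $\theta_1$ is a rational multiple of $\pi$ with $d(\theta_1)\ge2$, and I set $L_1=d(\theta_1)$.

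The inductive step is the heart of the argument. Suppose rational multiples $\theta_1,\dots,\theta_k\in E_1$ of $\pi$ have been selected satisfying $d(\theta_j)\nmid L_{j-1}$ and $\eta_1(\theta_j)\le L_{j-1}$ for $2\le j\le k$, where $L_j=\operatorname{lcm}(d(\theta_1),\dots,d(\theta_j))$ and $L_0:=1$. Apply admissibility of $(E_1,\eta_1)$ at the probe integer $m=L_k$ to obtain $\theta\in E_1$ with $\sin(L_k\theta)\ne0$ and $\eta_1(\theta)\le L_k$. Using that $\sin(L_k\theta)=0$ is equivalent to $d(\theta)\mid L_k$ when $\theta$ is a rational multiple of $\pi$ (and that $\sin(L_k\theta)\ne0$ is automatic otherwise), the construction forks: either $\theta$ is an irrational multiple of $\pi$, in which case I set $\theta_{k+1}:=\theta$ and terminate, landing in the finite construction of Theorem~\ref{constructionfoafinite} with $n=k$; or $\theta$ is a rational multiple of $\pi$ with $d(\theta)\nmid L_k$, in which case I set $\theta_{k+1}:=\theta$, update $L_{k+1}=\operatorname{lcm}(L_k,d(\theta))>L_k$, and continue. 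In both cases the prescription of $\mathcal{A}_0$ gives $\eta(\theta_{k+1})=L_k\ge\eta_1(\theta_{k+1})$. If the process never terminates, the resulting infinite sequence fits Theorem~\ref{constructionfoainfinite}; either way $(E,\eta)\in\mathcal{A}_0$ with $E\subset E_1$ and $\eta\ge\eta_1$ pointwise on $E$, that is, $(E,\eta)\le(E_1,\eta_1)$ in $\mathcal{A}$.

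The only genuinely clever step is the choice of probe integer $m=L_k$ at each stage: it is precisely the integer to which every previously selected angle is \emph{blind} (since $d(\theta_j)\mid L_k$ for all $j\le k$), so whatever new $\theta$ the admissibility of $(E_1,\eta_1)$ returns is forced to contribute either a genuinely new divisibility pattern or an irrational multiple of $\pi$ that closes out the construction. Once that choice is identified, the rest is routine bookkeeping and I anticipate no further obstacle.
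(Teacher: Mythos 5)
Your proposal is correct and follows essentially the same route as the paper's proof: probe admissibility at $m=\operatorname{lcm}(d(\theta_1),\dots,d(\theta_k))$, fork on whether the returned angle is a rational or irrational multiple of $\pi$, and feed the resulting finite or infinite sequence into Theorem~\ref{constructionfoafinite} or Theorem~\ref{constructionfoainfinite} respectively. The bookkeeping of the inequalities $\eta_1(\theta_{k+1})\le L_k=\eta(\theta_{k+1})$ and the non-divisibility $d(\theta_{k+1})\nmid L_k$ matches the paper exactly.
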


\begin{proof}
The proof relies on a construction of 
a suitable sequence $\{\theta_k\}$ from the set $E_1\subset (0,\pi)$. 
Since $(E_1,\eta_1)$ is an admissible function of angles 
there exists $\theta\in E_1$ such that $\eta_1(\theta)=1$. 
We set $\theta_1=\theta$. If $\theta_1$ is an ir\-rational multiple of $\pi$, 
then $(E,\eta)\leq (E_1,\eta_1)$ in $\mathcal{A}$, 
where $E=\{\theta_1\}$ and $\eta(\theta_1)=1$ 
are as in Theorem \ref{constructionfoafinite} with $n=0$ there. 
If $\theta_1$ is a rational multiple of $\pi$, 
then we proceed to construct an element $\theta_2$ as described in 
the following paragraph.

Assume that $\theta_1,\dots,\theta_{n}\in E_1$ are rational multiples of $\pi$ 
such that  
$$
d(\theta_k)\nmid  \operatorname{lcm}(d(\theta_1),\dots,d(\theta_{k-1}))
$$ 
and 
$$
\eta_1(\theta_k)\leq \operatorname{lcm}(d(\theta_1),\dots,d(\theta_{k-1}))
$$
for $2\leq k\leq n$, where $n\geq1$. 
Let $m=\operatorname{lcm}(d(\theta_1),\dots,d(\theta_{n}))$. Clearly $m\in\Z^+$. 
Since  $(E_1,\eta_1)$ is an admissible function of angles 
there exists $\theta\in E_1$ such that $\sin(m\theta)\neq 0$
and $m\geq \eta_1(\theta)$.
We set $\theta_{n+1}=\theta$. 
If $\theta_{n+1}$ is an irrational multiple of $\pi$, 
then the construction from Theorem \ref{constructionfoafinite} 
provides us with a function of angles $(E,\eta)\in\mathcal{A}_0$ such that 
$(E,\eta)\leq (E_1,\eta_1)$ in $\mathcal{A}$. 

Assume next that $\theta_{n+1}$ is a rational multiple of $\pi$. 
Since $\sin(m\theta_{n+1})\neq0$, we have 
$$
m\not\in \mathcal{I}(\theta_{n+1})=d(\theta_{n+1})\Z,
$$ 
which gives that 
$d(\theta_{n+1})\nmid m=\operatorname{lcm}(d(\theta_1),\dots,d(\theta_{n}))$. 
Notice also that the in\-equality 
$\eta_1(\theta_{n+1})\leq\operatorname{lcm}(d(\theta_1),\dots,d(\theta_{n}))$ 
is evident from construction. We are now in position to 
repeat the procedure from the previous paragraph.

Unless the above procedure terminates in a finite number of steps, 
the principle of induction provides us with with an 
in\-finite sequence $\{\theta_k\}_{k=1}^\infty$ of rational multiples 
of $\pi$ from the set $E_1$ satisfying the assumptions of 
Theorem \ref{constructionfoainfinite}. The construction from   
Theorem \ref{constructionfoainfinite} now provides us 
with an admissible function of angles $(E,\eta)\in\mathcal{A}_0$ 
such that $(E,\eta)\leq (E_1,\eta_1)$ in $\mathcal{A}$.
\end{proof}

Recall that an element $m$ in a partial order $\mathcal{M}$ is called 
minimal (in $\mathcal{M}$)  if 
it has the property that $x\in \mathcal{M}$ and $x\leq m$ implies that $x=m$.  

We next prove that the admissible functions of angles $(E,\eta)$ 
from the set $\mathcal{A}_0$ 
are minimal elements in $\mathcal{A}$.   

\begin{lemma}\label{cfoaminimal}
Let $(E,\eta)\in \mathcal{A}_0$.
Let $(E_1,\eta_1)\in\mathcal{A}$ be such that  
$(E_1,\eta_1)\leq (E,\eta)$ in $\mathcal{A}$. 
Then $(E_1,\eta_1)=(E,\eta)$. 
\end{lemma}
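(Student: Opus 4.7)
The plan is to prove that for each $\theta_k \in E$, there exists a distinguished positive integer $m_k$ which ``pins down'' $\theta_k$ as the unique possible witness of admissibility in $E$. Specifically, I will show that for each $k$ there is $m_k \in \Z^+$ such that for every $\theta \in E \setminus \{\theta_k\}$, either $\sin(m_k \theta) = 0$ or $m_k < \eta_1(\theta)$. Since $(E_1, \eta_1) \in \mathcal{A}$ with $E_1 \subset E$, admissibility applied to $m_k$ forces the witness to be $\theta_k$, giving simultaneously $\theta_k \in E_1$ and $\eta_1(\theta_k) \leq m_k$. Choosing $m_k = \eta(\theta_k)$ (the prescribed value from Theorem \ref{constructionfoainfinite} or \ref{constructionfoafinite}) will then combine with the hypothesis $\eta_1(\theta_k) \geq \eta(\theta_k)$ to yield $\eta_1(\theta_k) = \eta(\theta_k)$. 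Exhausting all $k$ establishes both $E \subset E_1$ and pointwise equality of $\eta_1$ and $\eta$.

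For the verification, I would set $m_1 = 1 = \eta(\theta_1)$ and $m_k = \operatorname{lcm}(d(\theta_1), \dots, d(\theta_{k-1})) = \eta(\theta_k)$ for $k \geq 2$. The exclusion of indices $j < k$ will use that $d(\theta_j) \mid m_k$, so $m_k \in d(\theta_j)\Z = \mathcal{I}(\theta_j)$ and hence $\sin(m_k \theta_j) = 0$. The exclusion of indices $j > k$ (when they exist in $E$) will use that the sequence $\operatorname{lcm}(d(\theta_1), \dots, d(\theta_{j-1}))$ is strictly increasing in $j$ by the non-divisibility hypothesis of the construction; this yields $\eta(\theta_j) > m_k$ and hence $\eta_1(\theta_j) \geq \eta(\theta_j) > m_k$, so $\theta_j$ cannot serve as a witness. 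For $k = 1$, the strict increase also ensures $\eta(\theta_j) \geq 2 > 1 = m_1$ for all $j \geq 2$, so only $\theta_1$ qualifies.

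In the finite-case setting of Theorem \ref{constructionfoafinite}, the terminal irrational point $\theta_{n+1}$ needs separate treatment. For $k \leq n$, the exclusion argument above still applies to $\theta_{n+1}$ because $\eta(\theta_{n+1}) = \operatorname{lcm}(d(\theta_1), \dots, d(\theta_n)) > m_k$. For $k = n+1$ itself, I take $m_{n+1} = \operatorname{lcm}(d(\theta_1), \dots, d(\theta_n))$; all $\theta_j$ with $j \leq n$ are killed by $\sin(m_{n+1}\theta_j) = 0$, while $\sin(m_{n+1}\theta_{n+1}) \neq 0$ because $\theta_{n+1}$ is an irrational multiple of $\pi$ (so $d(\theta_{n+1}) = 0$ and $\mathcal{I}(\theta_{n+1}) = \{0\}$).

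The main obstacle I anticipate is bookkeeping rather than genuine difficulty: one must carefully treat the base case $k = 1$ (where $\eta$ is smallest so the exclusion of other indices relies on the strict monotonicity of the lcm chain) and, in the finite case, the role of the irrational endpoint $\theta_{n+1}$, which behaves differently from the rational $\theta_j$'s because its ideal $\mathcal{I}(\theta_{n+1})$ is trivial. Once these two endpoints are handled, the inductive body of the argument reduces to a direct application of the divisibility properties of $\operatorname{lcm}$ built into the construction of $\mathcal{A}_0$.
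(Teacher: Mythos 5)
Your proposal is correct and follows essentially the same route as the paper: for each $k$ you apply admissibility of $(E_1,\eta_1)$ to the integer $m_k=\eta(\theta_k)$ and rule out every other $\theta_j\in E$ as a witness, using $\sin(\eta(\theta_k)\theta_j)=0$ for $j<k$ and the strict monotonicity $\eta(\theta_j)>\eta(\theta_k)$ for $j>k$, exactly the two facts the paper's proof relies on. The only (harmless) differences are that you observe the induction in the paper's write-up is unnecessary, since each $k$ can be handled independently, and that you spell out the finite case with the irrational endpoint, which the paper omits as ``similar.''
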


\begin{proof}
We consider the case when $(E,\eta)$ is as in 
Theorem \ref{constructionfoainfinite}. 
The remaining case when $(E,\eta)$ is as in 
Theorem \ref{constructionfoafinite} is similar and therefore omitted. 
By construction we have that $\eta(\theta_1)=1$, 
$\eta(\theta_j)<\eta(\theta_k)$ for $1\leq j<k$ 
and $\sin(\eta(\theta_k)\theta_j)=0$ for $1\leq j<k$. 
We need to prove that $\theta_k\in E_1$ and $\eta_1(\theta_k)=\eta(\theta_k)$ for $k=1,2,\dots$.  
We proceed by induction.

Since $(E_1,\eta_1)$ is an admissible function of angles there exists $\theta\in E_1$ such that $\eta_1(\theta)=1$. 
Since $(E_1,\eta_1)\leq (E,\eta)$ we have that $\theta=\theta_j$ for some $j\in\Z^+$ and $\eta(\theta_j)=1$. 
The condition $\eta(\theta_j)=1$ forces $j=1$. 
We have shown that $\theta _1\in E_1$ and  $\eta_1(\theta_1)=\eta(\theta_1)=1$. 

Assume next that $\theta_j\in E_1$ and $\eta_1(\theta_j)=\eta(\theta_j)$ for $1\leq j<k$, where $k\geq2$. 
We shall prove that $\theta_k\in E_1$ and $\eta_1(\theta_k)=\eta(\theta_k)$. 
Let $m=\eta(\theta_k)\in\Z^+$.  
Since $(E_1,\eta_1)$ is an admissible function of angles, 
there exists $\theta\in E_1$ such that $\sin(m\theta)\neq0$ and $m\geq \eta_1(\theta)$. 
Since $(E_1,\eta_1)\leq (E,\eta)$ we have that $\theta=\theta_j$ for some $j\in\Z^+$ with $\eta_1(\theta_j)\geq\eta(\theta_j)$. 
The inequality $\eta(\theta_j)\leq\eta(\theta_k)$ gives that $1\leq j\leq k$.
The condition $\sin(m\theta_j)\neq0$ now forces $j=k$. 
We have shown that $\theta _k\in E_1$ and  $\eta_1(\theta_k)=\eta(\theta_k)$. 
The conclusion of the lemma now follows by induction.
\end{proof}

We are now in position to calculate the minimal elements in $\mathcal{A}$.   

\begin{theorem}\label{minimalelementsA} 
The minimal elements in $\mathcal{A}$ are exactly those that belong to 
the set $\mathcal{A}_0$.
\end{theorem}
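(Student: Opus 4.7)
The plan is to deduce the theorem by combining the two preceding results, Lemma \ref{cfoaminimal} and Theorem \ref{lowerboundfoa}, which together give both directions of the claimed characterization. The argument should be brief and essentially formal.

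First, I would establish the inclusion $\mathcal{A}_0 \subset \{\text{minimal elements of } \mathcal{A}\}$. This is precisely the content of Lemma \ref{cfoaminimal}: if $(E,\eta) \in \mathcal{A}_0$ and $(E_1,\eta_1) \in \mathcal{A}$ satisfies $(E_1,\eta_1) \leq (E,\eta)$, then $(E_1,\eta_1) = (E,\eta)$, which is exactly the definition of $(E,\eta)$ being minimal in $\mathcal{A}$.

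For the reverse inclusion, suppose $(E_1,\eta_1) \in \mathcal{A}$ is a minimal element. By Theorem \ref{lowerboundfoa} there exists an admissible function of angles $(E,\eta) \in \mathcal{A}_0$ such that $(E,\eta) \leq (E_1,\eta_1)$ in the partial order on $\mathcal{A}$. Since $(E_1,\eta_1)$ is minimal, this forces $(E,\eta) = (E_1,\eta_1)$, and therefore $(E_1,\eta_1) \in \mathcal{A}_0$.

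There is no real obstacle here: the two non-trivial ingredients (the minimality of the constructed elements and the existence of a lower bound in $\mathcal{A}_0$ for every element of $\mathcal{A}$) have already been done, and the theorem is simply the conjunction of those two statements phrased in the language of minimal elements. The only thing to be careful about is the direction of the partial order $\leq$ on $\mathcal{A}$, which was defined by $(E_1,\eta_1) \leq (E_2,\eta_2)$ when $E_1 \subset E_2$ and $\eta_1(\theta) \geq \eta_2(\theta)$ for $\theta \in E_1$; in particular, a smaller element in the partial order corresponds to a stronger vanishing hypothesis in the uniqueness theorems, so the minimal elements are the most economical admissible functions of angles, which is exactly what makes Theorem \ref{minimalelementsA} the desired conclusion of the section.
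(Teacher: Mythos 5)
Your proposal is correct and follows exactly the paper's own argument: one inclusion is Lemma \ref{cfoaminimal} restated, and the other combines Theorem \ref{lowerboundfoa} with the definition of minimality. Nothing to add.
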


\begin{proof}
By Lemma \ref{cfoaminimal} we know that 
the admissible functions of angles $(E,\eta)$ 
from the set $\mathcal{A}_0$ 
are minimal elements in $\mathcal{A}$. 
We proceed to show that every minimal element in $\mathcal{A}$ has such form.

Let $(E_1,\eta_1)\in\mathcal{A}$ be minimal in $\mathcal{A}$. 
By Theorem \ref{lowerboundfoa} there exists $(E,\eta)\in\mathcal{A}_0$ 
such that $(E,\eta)\leq (E_1,\eta_1)$ in $\mathcal{A}$. 
Since $(E_1,\eta_1)$ is minimal in $\mathcal{A}$, 
we conclude that $(E,\eta)=(E_1,\eta_1)$.
\end{proof}

\section*{Acknowledgements}
The research of Jens Wittsten was supported by the Swedish Research Council Grant No.~2019-04878.

\end{document}